\newtheorem{theorem}{Theorem}[section]
\newtheorem{proposition}[theorem]{Proposition}
\newtheorem{lemma}[theorem]{Lemma}
\newtheorem{corollary}[theorem]{Corollary}
\newtheorem{fact}[theorem]{Fact}
\theoremstyle{definition}
\newtheorem{definition}[theorem]{Definition}
\newtheorem{question}[theorem]{Question}
\theoremstyle{remark}
\newtheorem{remark}[theorem]{Remark}
\newcommand{\ceeil}[1]{\left \lceil #1 \right \rceil }
\newcommand{\flooor}[1]{\left \lfloor #1 \right \rfloor }
\newcommand{\bigboxtimes}{\mathop{\mathlarger{\mathlarger{\boxtimes}}}
\displaylimits}
\newcommand{\super}{\mathcal{U}}
\DeclareMathOperator{\ter}{ter}
\tikzstyle{vertex}=[circle, draw, inner sep=0pt, minimum size=4pt,fill=black]
\newcommand{\vertex}{\node[vertex]}
\tikzstyle{whitevertex}=[circle, draw, inner sep=0pt, minimum size=4pt,fill=white]
\newcommand{\whitevertex}{\node[whitevertex]}
\tikzstyle{hollowvertex}=[circle, draw, inner sep=0pt, minimum size=4pt, fill=white]
\tikzstyle{phantomvertex}=[circle, draw, inner sep=0pt, minimum size=4pt,color=white]
\tikzset{
  .../.tip={[sep=2pt 2]
    Round Cap[]. Circle[length=0pt 2,sep=2pt] Circle[length=0pt 2,sep=2pt] Circle[length=0pt 2, sep=2pt 2]}}
\tikzset{
  .../.tip={[sep=2pt 2]
    Round Cap[]. Circle[length=0pt 2,sep=2pt] Circle[length=0pt 2,sep=2pt] Circle[length=0pt 2, sep=2pt 2]}}
\tikzset{
      ellipsis/.tip={
Square[length=2pt,sep=0pt,color=white] Circle[length=1pt,sep=0pt,color=black] Square[length=1pt,sep=0pt,color=white]
Circle[length=1pt,sep=0pt,color=black] Square[length=1pt,sep=0pt,color=white]
Circle[length=1pt,sep=0pt,color=black] Square[length=2pt,sep=0pt,color=white]}}
\tikzset{middlearrow/.style n args={2}{
        decoration={markings,
            mark= at position {#2} with {\arrow{#1}} ,
        },
        postaction={decorate}
    }
}
\begin{document}
	\title{The Threshold Dimension of a Graph}
\author{Lucas Mol\footnote{University of Winnipeg, 515 Portage Avenue, Winnipeg, MB R3B 2E9}, Matthew J. H. Murphy\footnote{University of Toronto, 27 King's College Circle, Toronto, ON M5S 1A1}, and Ortrud R.\ Oellermann$^*$\thanks{Supported by an NSERC Grant CANADA, Grant number RGPIN-2016-05237}\\
	\small \href{mailto:l.mol@uwinnipeg.ca}{l.mol@uwinnipeg.ca}, \href{mailto:mattjames.murphy@mail.utoronto.ca}{mattjames.murphy@mail.utoronto.ca},\\ \small\href{mailto:o.oellermann@uwinnipeg.ca}{o.oellermann@uwinnipeg.ca}}
\date{}

\maketitle
\begin{abstract}
	Let $G$ be a graph, and let $u$, $v$, and $w$ be vertices of $G$. If the distance between $u$ and $w$ does not equal the distance between $v$ and $w$, then $w$ is said to \emph{resolve} $u$ and $v$.	The \emph{metric dimension} of $G$, denoted $\beta(G)$, is the cardinality of a smallest set $W$ of vertices such that every pair of vertices of $G$ is resolved by some vertex of $W$.
The \emph{threshold dimension} of a graph $G$, denoted $\tau(G)$,  is the minimum metric dimension among all graphs $H$ having $G$ as a spanning subgraph. In other words, the threshold dimension of $G$ is the minimum metric dimension among all graphs obtained from $G$ by adding edges.  If $\beta(G) = \tau(G)$, then $G$ is said to be \emph{irreducible}; otherwise, we say that $G$ is \emph{reducible}. If $H$ is a graph having $G$ as a spanning subgraph and such that $\beta(H)=\tau(G)$, then $H$ is called a \emph{threshold graph} of $G$.
	
The first main part of the paper  has a geometric flavour, and gives an expression for the threshold dimension of a graph in terms of a minimum number of strong products of paths (each of sufficiently large order) that admits a certain type of embedding of the graph. This result is used to show that there are trees of arbitrarily large metric dimension having threshold dimension $2$. The second main part of the paper focuses on the threshold dimension of trees. A sharp  upper bound for the threshold dimension of trees is established. It is also shown that  the irreducible trees are precisely those of metric dimension at most 2. Moreover, if $T$ is a tree with metric dimension 3 or 4, then $T$ has threshold dimension $2$. It is shown, in these two cases, that a threshold graph for $T$ can be obtained by adding exactly one or two edges to $T$, respectively. However, these results do not extend to trees with metric dimension $5$, i.e., there are trees of metric dimension $5$ with threshold dimension exceeding $2$.
\end{abstract}

\section{Introduction}
Slater \cite{Slater1975}, being motivated by the problem of uniquely determining the location of an intruder in a network, first introduced the notion of `resolvability' in graphs.  For vertices $x$ and $y$ of a graph $G$, let $d_G(x,y)$ denote the distance between $x$ and $y$ in $G$.  We write $d(x,y)$ in place of $d_G(x,y)$ if $G$ is clear from context.  A vertex $w$ is said to \emph{resolve} a pair $u,v$ of vertices in $G$ if $d(u,w)\neq d(v,w)$.
A set $W \subseteq V(G)$ of vertices \emph{resolves} the graph $G$, and we say that $W$ is a \emph{resolving set} for $G$, if every pair of vertices of $G$ is resolved by some vertex of $W$. A smallest  resolving set of $G$ is called a \emph{basis} of $G$, and its cardinality is called the \emph{metric dimension} of $G$, denoted $\beta(G)$. Since being introduced by Slater \cite{Slater1975}, and independently by Harary and Melter \cite{HararyMelter1976},  the metric dimension has been studied extensively.   It is well-known that the problem of determining the metric dimension of a graph is NP-hard; see, for example, the proof of Khuller et al.~\cite{Khulleretal1996}. This suggests studying the problem of finding the metric dimension for special classes of graphs, and developing heuristics for approximating this invariant. A formula for the metric dimension of trees has been (re)discovered several times~\cite{Chartrandetal2000, HararyMelter1976, Slater1975}. A variety of applications and a substantial collection of publications that emphasize both the theoretical and computational aspects of this invariant are highlighted, for example, in the works of C\'{a}ceres et al.~\cite{Caceresetal2007} and Belmonte et al.~\cite{Belmonteetal2015}.
When we say dimension in this paper, unless qualified, we are referring to the metric dimension.

The question of how the metric dimension of a graph relates to that of its subgraphs has been considered by Chartrand et al.~\cite{Chartrandetal2000} and Khuller et al.~\cite{Khulleretal1996}. In particular, Chartrand et al.~\cite{Chartrandetal2000} proved that for every $\epsilon >0$, there is a graph $H$, and a subgraph $G$ of $H$, such that $\beta(H)/\beta(G) < \epsilon$.  Khuller et al.~\cite{Khulleretal1996} established a lower bound for the metric dimension of a graph in terms of its clique number.  It is natural to ask by how much, if at all, we can reduce the metric dimension of a graph by adding edges.  In other words, for a given graph $G$, what is the smallest metric dimension among all graphs that contain $G$ as a spanning subgraph?  We let $\super(G)$ denote the set of all graphs $H$ that contain $G$ as a spanning subgraph.

\begin{definition}
The \emph{threshold dimension} of a graph $G$, denoted $\tau(G)$, is the minimum metric dimension among all graphs $H$ that contain $G$ as a spanning subgraph, i.e., we have $\tau(G)=\min\{\beta(H)\colon\ H\in \super(G)\}$.  A graph $H\in\super(G)$ of metric dimension $\tau(G)$ is called a \emph{threshold graph} of $G$.
\end{definition}

For a graph $G$, let $\overline{G}$ denote the complement of $G$.  The graph obtained from $G$ by adding a set $E \subseteq E\left(\overline{G}\right)$ of edges to $G$ is denoted by $G+E$.  For a single edge $e\in E\left(\overline{G}\right)$, we use the notation $G+e$ in place of $G+\{e\}$.  Evidently, we may write
\[
\tau(G)=\min\{\beta(G+E)\colon\ E\subseteq E\left(\overline{G})\right)\}.
\]

The threshold dimension of $G$ (and the threshold dimension of any spanning subgraph of $G$) gives a lower bound for the metric dimension of $G$. This suggests the problem of determining those graphs for which this lower bound is achieved.

\begin{definition}
A graph $G$ is said to be \emph{irreducible} if $\beta(G) =\tau(G)$;  otherwise, the graph $G$ is said to be \emph{reducible}.
\end{definition}

Before we proceed, we give some examples.  It is well-known that for every $n\geq 2$, and every connected graph $G$ of order $n$, we have
\[
1\leq \beta(G)\leq n-1,
\]
with equality on the left if and only if $G\cong P_n$, and equality on the right if and only if $G\cong K_n$.
For every $n\geq 2$, the path $P_n$, the unique connected graph of order $n$ and dimension $1$, is irreducible.  Further, since the path $P_n$ does not contain any other connected graph of order $n$ as a spanning subgraph, we see that all connected graphs of order $n$ and dimension $2$ are also irreducible. At the other extreme, the complete graph $K_n$, the unique connected graph of order $n$ and dimension $n-1$, is trivially irreducible since $\super(K_n)=\{K_n\}$.  However, graphs of order $n$ and dimension $n-2$ need not be irreducible.  For example, for $n\geq 5$ and $s\in\{1,\ldots,n-1\}$, the complete bipartite graph $K_{s,n-s}$ has dimension $n-2$, but the addition of a single edge produces a graph of dimension $n-3$.

Chartrand et al.~\cite{Chartrandetal2000} proved that if $T$ is a tree of order at least $3$, then for every edge $e \in E\left(\overline{T}\right)$, we have $\beta(T) -2 \leq \beta(T+e) \leq \beta(T)+1$.  For a graph $G$, we will be interested in whether there exists a single edge $e\in E\left(\overline{G}\right)$ such that $\beta(G+e)<\beta(G)$.  If such an edge does exist, then $G$ is obviously reducible, but we will see that this is not a necessary condition for reducibility.

In Section \ref{embed}, we provide a geometric interpretation of the threshold dimension of a graph in terms of a minimum number of strong products of paths (each of sufficiently large order) that admit a certain type of embedding of the graph. We apply this result to demonstrate that there are trees of arbitrarily large dimension with threshold dimension $2$.  Finally, we compare the threshold dimension to the \emph{strong isometric dimension}, see \cite{FitzpatrickNowakowski2000}, which is also defined in terms of embeddings in strong products of paths.

In Section \ref{trees}, we focus on the threshold dimension of trees.
We first determine a sharp upper bound for the threshold dimension of every tree of order $n$.  We then show that if $T$ is a tree with $\beta(T) \ge 3$, then there is an edge $e\in E\left(\overline{T}\right)$ such that $\beta(T+e) < \beta(T)$.
For every tree $T$ with dimension $4$, we show that there is a set of two edges whose addition to $T$ produces a graph with dimension $2$.  Thus, if a tree $T$ has $\beta(T) \in \{2,3,4\}$, then $\tau(T)=2$.  Finally, we show that there are trees with dimension $5$ having threshold dimension strictly greater than $2$.

\section{Preliminaries}

For a graph $G$, let $\mbox{diam}(G)$ denote the \emph{diameter} of $G$, i.e., the maximum distance between a pair of vertices of $G$.
The $k$-\emph{neighbourhood} of a vertex $v$ in $G$, denoted $N_k(v)$, is the set of vertices in $G$ which are distance exactly $k$ from $v$,  i.e., we have $N_k(v)=\{x \in V(G)\colon\ d(x,v)=k\}$.  The notation $N(v)$ is used in place of $N_1(v)$.  For a set $W$ of vertices of $G$, the $W$-\emph{neighbourhood} of a vertex $v$ in $G$ is defined as $N_W(v)=N(v) \cap W$.

If $G_1,G_2,\dots, G_k$ are graphs, then their \emph{strong product} is the graph
\[
G_1\boxtimes G_2\boxtimes\cdots\boxtimes G_k=\bigboxtimes_{i=1}^k G_i,
\]
with vertex set $\{(x_1,x_2,\dots,x_k)\colon\ x_i\in V(G_i)\}$, and for which two distinct vertices $x=(x_1,x_2,\dots,x_k)$ and $y=(y_1,y_2,\dots, y_k)$ are adjacent if and only if for every $1\leq i\leq k$, either $x_iy_i\in E(G_i)$ or $x_i=y_i$.  The distance between $x$ and $y$ in $G_1\boxtimes G_2\boxtimes\cdots\boxtimes G_k$ is given by $\max\{d_{G_i}(x_i,y_i)\colon\ 1\leq i\leq k\}.$  For a graph $G$, we let $G^{\boxtimes, k}$ denote the $k$th power of $G$ with respect to the strong product, i.e., we have
\[
G^{\boxtimes,k}=\displaystyle\bigboxtimes_{i=1}^k G.
\]
See~\cite{ProductHandbook} for more background on graph products.

Let $G$ and $H$ be graphs.  A map $\varphi:V(G)\rightarrow V(H)$ is called an \emph{embedding} of $G$ in $H$ if it is injective and preserves the edge relation (i.e., for all vertices $x,y\in V(G)$, if $xy\in E(G)$, then $\varphi(x)\varphi(y)\in E(H)$). The map $\varphi$ is an \emph{isometric embedding} of $G$ in $H$ if $d_G(u,v)=d_H(\varphi(u),\varphi(v))$ for all $u,v\subseteq V(G)$.  Note that an isometric embedding of $G$ in $H$ is necessarily an embedding of $G$ in $H$.  If $G$ is a subgraph of $H$, then we say that $G$ is \emph{isometric} in $H$ if $d_G(u,v)=d_H(u,v)$ for all vertices $u,v\in V(G)$, i.e., if the inclusion map from $V(G)$ to $V(H)$ is an isometric embedding of $G$ in $H$.

For a graph $G$ and a subset $W\subseteq V(G)$, we let $G[W]$ denote the subgraph of $G$ induced by $W$.  For an embedding $\varphi$ of $G$ in $H$, we let $\varphi(G)=H[\varphi(V(G))]$, i.e., $\varphi(G)$ is the subgraph of $H$ induced by the range of $\varphi$.  Clearly, the graph $\varphi(G)$ is isomorphic to the graph $G'\in \super(G)$ with vertex set $V(G')=V(G)$ and edge set $E(G')=\{xy\colon\ \varphi(x)\varphi(y)\in E(\varphi(G))\}.$

The metric dimension of trees is well understood, and a metric basis for a tree can be constructed in polynomial time.  We require some terminology and notation to describe this procedure.  Let $T$ be a tree. A vertex $v$ of degree at least $3$ in $T$ is called a  \emph{major vertex}. A leaf $u$ is said to be a \emph{terminal vertex} of the major vertex $v$ if $d(u,v)< d(u,w)$ for all other major vertices $w$ of $T$. If $u$ is a terminal vertex of $v$, then the maximal path of $T$ containing $u$ but not $v$ is called a \emph{limb} at $v$.  The \emph{terminal degree} of $v$, denoted $\ter(v)$,  is the number of terminal vertices of $v$.
The following was proven by Slater~\cite{Slater1975}, and  independently by Harary and Melter~\cite{HararyMelter1976}.  A different proof was provided by Chartrand et al.~\cite{Chartrandetal2000}.

\begin{theorem}\label{Slater}
Let $T$ be a tree that is not isomorphic to a path, and let $S$ be the set of exterior major vertices of $T$.  Then $\beta(T)=\sum_{v \in S} (\ter(v)-1)$. Moreover, a basis for $T$ can be constructed by selecting, for each exterior major vertex $v$ with terminal degree at least $2$, exactly one vertex from all but one of its limbs.
\end{theorem}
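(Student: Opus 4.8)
Write $S$ for the set of \emph{exterior major vertices} of $T$, that is, the major vertices $v$ with $\ter(v)\ge 1$, and note that the summands with $\ter(v)=1$ contribute nothing, so $\sum_{v\in S}(\ter(v)-1)=\sum_{v\in S,\,\ter(v)\ge 2}(\ter(v)-1)$. Both conclusions of the theorem follow once I establish (a) the lower bound $\beta(T)\ge\sum_{v\in S}(\ter(v)-1)$ and (b) that the explicit set $W$ described in the statement, which plainly has exactly $\sum_{v\in S}(\ter(v)-1)$ vertices, is a resolving set. The tool I would prove first, and use for both parts, is the following criterion: \emph{a set $W\subseteq V(T)$ resolves $T$ if and only if for every vertex $m$ of $T$ at most one component of $T-m$ is disjoint from $W$.} For this, recall that in a tree, if $P$ is the $x$--$y$ path and $f$ is the vertex of $P$ closest to a vertex $w$, then $d(w,x)=d(w,f)+d(f,x)$ and likewise for $y$; hence $w$ fails to resolve $x,y$ precisely when $f$ is the midpoint of $P$ (so in particular every vertex resolves $x,y$ when $d(x,y)$ is odd). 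Thus if some $m$ has two $W$-free components $C_1,C_2$ of $T-m$, then the neighbours $x\in C_1$, $y\in C_2$ of $m$ are a pair unresolved by $W$ (each $w\in W$ avoids $C_1\cup C_2$, so its closest vertex on the length-two path from $x$ to $y$ is $m$); conversely any unresolved pair $x,y$ yields, with $m$ the midpoint of their path, two $W$-free components of $T-m$.

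For (a), let $W$ be any resolving set and fix $v\in S$. Since $v$ is the first major vertex met along the walk from any terminal vertex of $v$, each limb at $v$ consists of vertices of degree at most $2$ and is precisely one component of $T-v$; there are $\ter(v)$ such limb-components. By the criterion at most one of them is $W$-free, so at least $\ter(v)-1$ limbs at $v$ meet $W$. As limbs at distinct exterior major vertices are pairwise disjoint, summing over $S$ gives $|W|\ge\sum_{v\in S}(\ter(v)-1)$.

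For (b), let $W$ consist of one vertex chosen from each of all but one of the limbs at every $v\in S$ with $\ter(v)\ge 2$, so that at most one limb at each exterior major vertex is $W$-free. By the criterion it suffices to show no vertex $m$ has two $W$-free components of $T-m$; suppose $C_1,C_2$ were two such. We may assume $\deg(m)\ge 3$: if $\deg(m)\le 2$ then $T-m$ has at most two components, and if it had two $W$-free ones we would get $W\subseteq\{m\}$, impossible since $|W|\ge 2$ (a non-path tree always has an exterior major vertex of terminal degree at least $2$, namely the ``deepest'' one in any direction). Now examine $C_1$. Being a subtree it contains a leaf $u$ of $T$; let $v$ be the exterior major vertex with $u$ as a terminal vertex. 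If $C_1$ contains no major vertex of $T$, then $C_1$ is a pendant path and, walking from $u$, the first major vertex reached is $m$, so $C_1$ is exactly a limb at $m$. Otherwise, comparing the walk from $u$ toward a major vertex of $C_1$ with the walk from $u$ leaving $C_1$ forces $v\in C_1$ with the $u$-limb of $v$ inside $C_1$; moreover the only branch of $v$ meeting $V(T)\setminus C_1$ is the one containing $m$, which is not a limb since $\deg(m)\ge 3$, so every limb at $v$ lies in $C_1$. If $\ter(v)\ge 2$, some limb at $v$ was chosen, putting a vertex of $W$ into $C_1$, a contradiction. If $\ter(v)=1$, then $v$ has a branch $D\subsetneq C_1$ that is not a pendant path (else $\ter(v)\ge 2$), hence contains a major vertex, and $D$ is again a $W$-free pendant subtree attached at a vertex of degree $\ge 3$; repeating the argument on $D$, and noting the relevant subtree strictly shrinks each time, we terminate at an exterior major vertex of terminal degree $\ge 2$ and reach the same contradiction. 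Hence the only surviving possibility is that both $C_1$ and $C_2$ are limbs at $m$ --- but then $m\in S$ has two $W$-free limbs, contradicting the choice of $W$. Therefore $W$ resolves $T$, which completes the proof.

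The genuinely routine points --- the projection identity, the disjointness of limbs, and the existence of an exterior major vertex of terminal degree $\ge 2$ in any non-path tree --- are straightforward. The real labour is the bookkeeping in part (b): tracking exactly which limbs of which exterior major vertices fall inside a prescribed component of $T-m$, and organizing the descent through branches in the case $\ter(v)=1$. I expect essentially all of the difficulty to be concentrated there.
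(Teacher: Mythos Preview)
The paper does not give its own proof of this theorem: it is quoted from Slater, Harary--Melter, and Chartrand et al., with no argument reproduced. So there is no ``paper's proof'' to compare against; what I can assess is whether your argument stands on its own.

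Your proof is correct. The organizing criterion --- that $W$ resolves a tree $T$ if and only if no vertex $m$ has two $W$-free components in $T-m$ --- is exactly the right lever, and your justification of both directions (via the projection identity $d(w,x)=d(w,f)+d(f,x)$ and the midpoint observation) is sound. The lower bound then drops out immediately from the pairwise disjointness of limbs at distinct exterior major vertices. For the upper bound, your case analysis is complete: the $\deg(m)\le 2$ case is disposed of by the (correct) fact that the formula value is at least $2$ for any non-path tree; when $\deg(m)\ge 3$, the dichotomy ``$C_i$ contains a major vertex or not'' is handled correctly, and in the $\ter(v)=1$ sub-case your descent through a non-limb branch $D\subsetneq C_1$ does strictly shrink the subtree and hence terminates at some exterior major vertex of terminal degree at least $2$, yielding the contradiction. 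The remaining possibility, that both $C_1$ and $C_2$ are limbs at $m$, contradicts the construction of $W$ as you say.

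This is essentially the classical argument (in the spirit of Slater and of Chartrand et al.); your packaging via the single branching criterion is clean and makes the bookkeeping in part~(b) transparent.
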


\noindent
It follows that if $T$ is a tree with  $\beta(T) \geq \ell$, then $T$ must have more than $\ell$ leaves.


\section{A geometric interpretation of the threshold dimension} \label{embed}

In this section, we present a geometric interpretation of the threshold dimension of a graph, in terms of certain embeddings in strong products of paths.  Throughout this section, we let $V(P_n)=\{0,\ldots,n-1\}$. Thus, the vertices of $P_n^{\boxtimes, k}$ are $k$-tuples over the set $\{0, \ldots, n-1\}$.  Our choice of notation for the vertex set of $P_n$ makes calculating distances in $P_n^{\boxtimes,k}$ particularly simple.

\begin{fact} \label{distanceinstrongproducts}
If $x=(x_1, \ldots, x_k)$ and $y=(y_1, \ldots, y_k)$ are in $V\left(P_n^{\boxtimes, k}\right)$, then
\[
d(x,y)=\mathrm{max}\{|x_i-y_i|\colon\ 1 \leq i \leq k\}.
\]
In particular, if $x$ and $y$ are distinct, then they are adjacent if and only if $|x_i-y_i|\leq 1$ for every $1 \leq i \leq k$.
\end{fact}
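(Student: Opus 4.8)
The plan is to read the formula off from the general expression for distance in a strong product that was recalled above: for any graphs $G_1,\dots,G_k$ one has $d(x,y)=\max\{d_{G_i}(x_i,y_i) : 1\le i\le k\}$. Specializing to $G_i=P_n$ for every $i$, and using that consecutive integers are adjacent in $P_n$ so that $d_{P_n}(a,b)=|a-b|$ for all $a,b\in\{0,\dots,n-1\}$, gives $d(x,y)=\max\{|x_i-y_i| : 1\le i\le k\}$ at once.

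If instead a self-contained argument is wanted, I would set $m=\max\{|x_i-y_i| : 1\le i\le k\}$ and prove the two inequalities $d(x,y)\ge m$ and $d(x,y)\le m$ separately. For the first, note that by the definition of adjacency in $P_n^{\boxtimes,k}$ any two consecutive vertices of a walk agree or differ by exactly $1$ in each coordinate; hence along a walk of length $\ell$ from $x$ to $y$ the $i$th coordinate changes by at most $\ell$, so $|x_i-y_i|\le\ell$ for every $i$ and therefore $m\le\ell$, giving $d(x,y)\ge m$. For the second, I would exhibit a walk $x=z^{(0)},z^{(1)},\dots,z^{(m)}=y$ of length $m$: at each step, move every coordinate that has not yet reached its target by one unit toward that target. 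Consecutive vertices then differ by at most $1$ in every coordinate, and they are distinct because any coordinate $i$ with $|x_i-y_i|=m$ still moves at each of the $m$ steps; thus every consecutive pair is an edge and $d(x,y)\le m$. (When $m=0$ we have $x=y$ and the statement is trivial.)

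The ``in particular'' clause is then immediate: distinct vertices $x$ and $y$ are adjacent precisely when $d(x,y)=1$, i.e.\ when $\max_i|x_i-y_i|=1$; since $x\ne y$ forces $|x_i-y_i|\ge1$ for at least one $i$, this is equivalent to $|x_i-y_i|\le1$ for every $i$. There is no real obstacle in this proof; the only point requiring a line of care in the direct argument is checking that the interpolating walk genuinely consists of edges, i.e.\ that consecutive vertices are distinct — which, as noted above, holds because a coordinate attaining the maximum $m$ is adjusted at every one of the $m$ steps.
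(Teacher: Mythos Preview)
Your proposal is correct and matches the paper's treatment: the paper states this as a \emph{Fact} without proof, relying on the general distance formula $d(x,y)=\max\{d_{G_i}(x_i,y_i):1\le i\le k\}$ for strong products recalled in the preliminaries, together with $d_{P_n}(a,b)=|a-b|$. Your first paragraph is exactly this derivation; the additional self-contained argument you supply is more than the paper provides but is also correct.
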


Our choice of notation for $V(P_n)$ is also important because the vertices of $P_n$ will correspond to distances, and hence vertices of $P_n^{\boxtimes,k}$ will correspond to vectors of distances.  Let $G$ be a connected graph with resolving set $W=\{w_1,w_2,\dots,w_k\}.$  Then every vertex $x\in V(G)$ is uniquely determined by its vector of distances to vertices in $W$, given explicitly by $\left(d_G(w_1,x),d_G(w_2,x),\dots,d_G(w_b,x)\right)$.  We first show that the map sending every vertex $x$ to this vector of distances to $W$ is an embedding of $G$ in $P^{\boxtimes,k}$ for some sufficiently large path $P$.

We will then show that if $W$ is a resolving set for some graph $H\in \super(G)$, then there is an embedding $\varphi$ of $G$ in $P^{\boxtimes,k}$ for some sufficiently large path $P$, such that for every vertex $x\in V(G)$, we have that $\varphi(x)$ is exactly the vector of distances in $\varphi(G)$ from $\varphi(x)$ to the vertices of $\varphi(W)$.  We first give a formal definition of the embeddings that we have just described.

\begin{definition}
Let $G$ be a graph, let $W=\{w_1,w_2,\dots,w_k\}$ be a subset of $V(G)$, and let $P$ be a path.  A \emph{$W$-resolved embedding} of $G$ in $P^{\boxtimes,k}$ is an embedding $\varphi$ of $G$ in $P^{\boxtimes,k}$ such that for every $x\in V(G)$, we have
\[
\varphi(x)=\left(d_{\varphi(G)}(\varphi(w_1),\varphi(x)),
\dots,d_{\varphi(G)}(\varphi(w_k),\varphi(x))\right),
\]
i.e., for every $1\leq i\leq k$, the $i$th coordinate of $\varphi(x)$ is exactly the distance between $\varphi(w_i)$ and $\varphi(x)$ in $\varphi(G)$.
\end{definition}

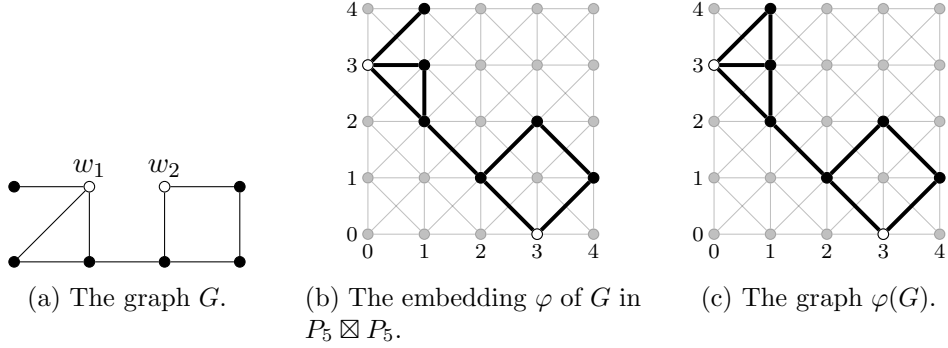
\begin{figure}[htb]
\centering
\begin{subfigure}[t]{0.32\textwidth}
\centering
\begin{tikzpicture}[scale=1]
\vertex (0) at (0,0) {};
\vertex (1) at (1,0) {};
\vertex (2) at (2,0) {};
\vertex (3) at (3,0) {};
\whitevertex (4) at (1,1) {};
\node[above] at (1,1) {$w_1$};
\whitevertex (5) at (2,1) {};
\node[above] at (2,1) {$w_2$};
\vertex (6) at (0,1) {};
\vertex (7) at (3,1) {};
\path
(0) edge (1)
(1) edge (2)
(2) edge (3)
(1) edge (4)
(2) edge (5)
(4) edge (6)
(4) edge (0)
(5) edge (7)
(3) edge (7);
\end{tikzpicture}
\caption{The graph $G$.}
\end{subfigure}
\begin{subfigure}[t]{0.32\textwidth}
\centering
\begin{tikzpicture}[scale=0.75]
  \foreach \x in {0,...,4}
    \foreach \y in {0,...,4}
       {
       \vertex[opacity=0.25]  (\x\y) at (\x,\y) {};
       }
  \foreach \x in {0,...,3}
    \foreach \y in {0,...,3}
    {
    \pgfmathtruncatemacro{\a}{\x+1}
    \pgfmathtruncatemacro{\b}{\y+1}
    \path[opacity=0.25]
    (\x\y) edge (\a\b)
    (\x\b) edge (\a\y);
    }
\foreach \x in {0,...,3}
    \foreach \y in {0,...,4}
    {
    \pgfmathtruncatemacro{\a}{\x+1}
    \path[opacity=0.25]
    (\x\y) edge (\a\y)
    (\y\x) edge (\y\a);
    }
\foreach \x in {0,...,4}
{
\node[left] at (0,\x) {\scriptsize \x};
\node[below] at (\x,0) {\scriptsize \x};
}
\whitevertex (a) at (0,3) {};
\vertex (b) at (1,2) {};
\vertex (c) at (1,3) {};
\whitevertex (d) at (3,0) {};
\vertex (e) at (2,1) {};
\vertex (f) at (3,2) {};
\vertex (g) at (1,4) {};
\vertex (h) at (4,1) {};
\path[line width=1.5pt]
(a) edge (b)
(b) edge (c)
(d) edge (e)
(e) edge (f)
(b) edge (e)
(a) edge (g)
(a) edge (c)
(d) edge (h)
(f) edge (h);
\end{tikzpicture}
\caption{The embedding $\varphi$ of $G$ in $P_5\boxtimes P_5$.}
\end{subfigure}
\begin{subfigure}[t]{0.32\textwidth}
\centering
\begin{tikzpicture}[scale=0.75]
  \foreach \x in {0,...,4}
    \foreach \y in {0,...,4}
       {
       \vertex[opacity=0.25]  (\x\y) at (\x,\y) {};
       }
  \foreach \x in {0,...,3}
    \foreach \y in {0,...,3}
    {
    \pgfmathtruncatemacro{\a}{\x+1}
    \pgfmathtruncatemacro{\b}{\y+1}
    \path[opacity=0.25]
    (\x\y) edge (\a\b)
    (\x\b) edge (\a\y);
    }
\foreach \x in {0,...,3}
    \foreach \y in {0,...,4}
    {
    \pgfmathtruncatemacro{\a}{\x+1}
    \path[opacity=0.25]
    (\x\y) edge (\a\y)
    (\y\x) edge (\y\a);
    }
\foreach \x in {0,...,4}
{
\node[left] at (0,\x) {\scriptsize \x};
\node[below] at (\x,0) {\scriptsize \x};
}
\whitevertex (a) at (0,3) {};
\vertex (b) at (1,2) {};
\vertex (c) at (1,3) {};
\whitevertex (d) at (3,0) {};
\vertex (e) at (2,1) {};
\vertex (f) at (3,2) {};
\vertex (g) at (1,4) {};
\vertex (h) at (4,1) {};
\path[line width=1.5pt]
(a) edge (b)
(b) edge (c)
(d) edge (e)
(e) edge (f)
(b) edge (e)
(a) edge (g)
(a) edge (c)
(d) edge (h)
(f) edge (h)
(c) edge (g);
\end{tikzpicture}
\caption{The graph $\varphi(G)$.}
\end{subfigure}
\caption{A $\{w_1,w_2\}$-resolved embedding $\varphi$ of $G$ in $P_5\boxtimes P_5$, defined by $\varphi(x)=(d_G(w_1,x),d_G(w_2,x))$ for all $x\in V(G)$.}
\label{ResolvedEmbedding1}
\end{figure}

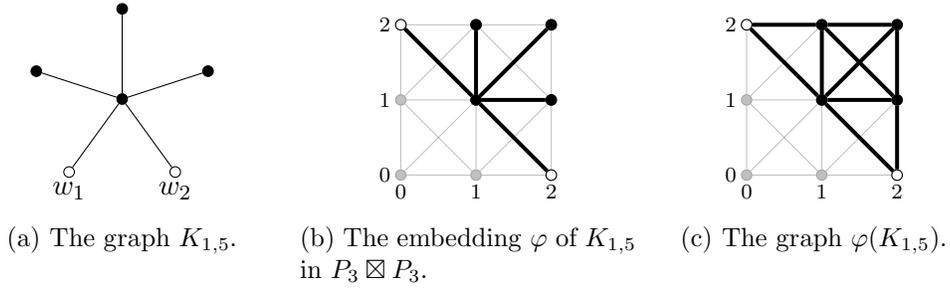
\begin{figure}[htb]
\centering
\begin{subfigure}[t]{0.32\textwidth}
\centering
\begin{tikzpicture}[scale=1.2]
\vertex (0) at (0,0) {};
\vertex (1) at (0+18:1) {};
\vertex (2) at (72+18:1) {};
\vertex (3) at (144+18:1) {};
\whitevertex (4) at (216+18:1) {};
\node[below] at (216+18:1) {$w_1$};
\whitevertex (5) at (288+18:1) {};
\node[below] at (288+18:1) {$w_2$};
\path
(0) edge (1)
(0) edge (2)
(0) edge (3)
(0) edge (4)
(0) edge (5);
\end{tikzpicture}
\caption{The graph $K_{1,5}$.}
\end{subfigure}
\begin{subfigure}[t]{0.32\textwidth}
\centering
\begin{tikzpicture}[scale=1]
  \foreach \x in {0,...,2}
    \foreach \y in {0,...,2}
       {
       \vertex[opacity=0.25]  (\x\y) at (\x,\y) {};
       }
  \foreach \x in {0,...,1}
    \foreach \y in {0,...,1}
    {
    \pgfmathtruncatemacro{\a}{\x+1}
    \pgfmathtruncatemacro{\b}{\y+1}
    \path[opacity=0.25]
    (\x\y) edge (\a\b)
    (\x\b) edge (\a\y);
    }
\foreach \x in {0,...,1}
    \foreach \y in {0,...,2}
    {
    \pgfmathtruncatemacro{\a}{\x+1}
    \path[opacity=0.25]
    (\x\y) edge (\a\y)
    (\y\x) edge (\y\a);
    }
\foreach \x in {0,...,2}
{
\node[left] at (0,\x) {\scriptsize \x};
\node[below] at (\x,0) {\scriptsize \x};
}
\whitevertex (a) at (0,2) {};
\vertex (b) at (1,1) {};
\whitevertex (c) at (2,0) {};
\vertex (d) at (1,2) {};
\vertex (e) at (2,2) {};
\vertex (f) at (2,1) {};
\path[line width=1.5pt]
(b) edge (a)
(b) edge (c)
(b) edge (d)
(b) edge (e)
(b) edge (f);
\end{tikzpicture}
\caption{The embedding $\varphi$ of $K_{1,5}$ in $P_3\boxtimes P_3$.}
\end{subfigure}
\begin{subfigure}[t]{0.32\textwidth}
\centering
\begin{tikzpicture}[scale=1]
  \foreach \x in {0,...,2}
    \foreach \y in {0,...,2}
       {
       \vertex[opacity=0.25]  (\x\y) at (\x,\y) {};
       }
  \foreach \x in {0,...,1}
    \foreach \y in {0,...,1}
    {
    \pgfmathtruncatemacro{\a}{\x+1}
    \pgfmathtruncatemacro{\b}{\y+1}
    \path[opacity=0.25]
    (\x\y) edge (\a\b)
    (\x\b) edge (\a\y);
    }
\foreach \x in {0,...,1}
    \foreach \y in {0,...,2}
    {
    \pgfmathtruncatemacro{\a}{\x+1}
    \path[opacity=0.25]
    (\x\y) edge (\a\y)
    (\y\x) edge (\y\a);
    }
\foreach \x in {0,...,2}
{
\node[left] at (0,\x) {\scriptsize \x};
\node[below] at (\x,0) {\scriptsize \x};
}
\whitevertex (a) at (0,2) {};
\vertex (b) at (1,1) {};
\whitevertex (c) at (2,0) {};
\vertex (d) at (1,2) {};
\vertex (e) at (2,2) {};
\vertex (f) at (2,1) {};
\path[line width=1.5pt]
(b) edge (a)
(b) edge (c)
(b) edge (d)
(b) edge (e)
(b) edge (f)
(a) edge (d)
(d) edge (e)
(d) edge (f)
(e) edge (f)
(f) edge (c);
\end{tikzpicture}
\caption{The graph $\varphi(K_{1,5})$.}
\end{subfigure}
\caption{A $\{w_1,w_2\}$-resolved embedding $\varphi$ of $K_{1,5}$ in $P_3\boxtimes P_3$.}
\label{ResolvedEmbedding2}
\end{figure}

Given a graph $G$ with a resolving set $W$, we will see that one can define a $W$-resolved embedding of $G$ by mapping each vertex $x\in V(G)$ to its vector of distances to vertices in $W$ (see Figure~\ref{ResolvedEmbedding1}).  However, this is not the only way that a $W$-resolved embedding of a graph $G$ may arise.  Figure~\ref{ResolvedEmbedding2} shows a $W$-resolved embedding of a graph $G$ in which $W$ is not a resolving set for $G$.  Note, however, that $\varphi(W)$ is a resolving set for the graph $\varphi(G)$ in this case.  In particular, this means that $W$ is a resolving set for the graph $G'\in\super(G)$ corresponding to $\varphi(G)$.  We will see that this observation is true in general.  The main result of this section is the following.

\begin{theorem}\label{Correspondence}
Let $G$ be a connected graph of diameter $D$, and let $W=\{w_1,w_2,\dots,$ $w_k\}\subseteq V(G)$.  Then $W$ is a resolving set for some graph $H\in \super(G)$ if and only if there is a $W$-resolved embedding of $G$ in $P_{D+1}^{\boxtimes,k}$.
\end{theorem}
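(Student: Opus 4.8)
The plan is to prove both implications via the natural ``distance-vector'' map. For a connected graph $H\in\super(G)$ and the set $W=\{w_1,\dots,w_k\}$, define $\varphi\colon V(G)\to V\bigl(P_{D+1}^{\boxtimes,k}\bigr)$ by
\[
\varphi(x)=\bigl(d_H(w_1,x),d_H(w_2,x),\dots,d_H(w_k,x)\bigr).
\]
In the forward direction $H$ will be the given graph in which $W$ resolves; in the reverse direction the embedding is given and $H$ is recovered from it. Throughout, let $G'$ be the graph with $V(G')=V(G)$ and $E(G')=\{xy\colon\ \varphi(x)\varphi(y)\in E(\varphi(G))\}$, so that, as observed earlier, $x\mapsto\varphi(x)$ is an isomorphism from $G'$ onto $\varphi(G)$ and $G'\in\super(G)$ whenever $\varphi$ is an embedding of $G$.

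For the forward direction, suppose $W$ is a resolving set for some $H\in\super(G)$ and define $\varphi$ as above. First I would check that $\varphi$ is a well-defined embedding of $G$ in $P_{D+1}^{\boxtimes,k}$: each coordinate satisfies $d_H(w_i,x)\le d_G(w_i,x)\le D$ since adding edges cannot increase distances, so $\varphi(x)$ is a vertex of $P_{D+1}^{\boxtimes,k}$; injectivity of $\varphi$ is precisely the statement that $W$ resolves $H$; and if $xy\in E(G)\subseteq E(H)$ then $|d_H(w_i,x)-d_H(w_i,y)|\le d_H(x,y)=1$ for each $i$, so $\varphi(x)$ and $\varphi(y)$ are adjacent in $P_{D+1}^{\boxtimes,k}$ by Fact~\ref{distanceinstrongproducts}. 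The crux is then to show $\varphi$ is $W$-resolved, i.e.\ that $d_{\varphi(G)}(\varphi(w_i),\varphi(x))=d_H(w_i,x)$ for all $i$ and $x$, and I would do this by a two-sided estimate. For ``$\geq$'': since $\varphi(G)$ is a subgraph of $P_{D+1}^{\boxtimes,k}$ and $\varphi(w_i)$ has $i$th coordinate $d_H(w_i,w_i)=0$, Fact~\ref{distanceinstrongproducts} gives
\[
d_{\varphi(G)}(\varphi(w_i),\varphi(x))\ \geq\ d_{P_{D+1}^{\boxtimes,k}}(\varphi(w_i),\varphi(x))\ \geq\ \bigl|0-d_H(w_i,x)\bigr|\ =\ d_H(w_i,x).
\]
For ``$\leq$'': the same triangle-inequality computation shows that \emph{every} edge $xy\in E(H)$ has $|d_H(w_i,x)-d_H(w_i,y)|\le1$ for all $i$, hence $\varphi(x)\varphi(y)\in E(\varphi(G))$; thus $E(H)\subseteq E(G')$, so $d_{G'}(w_i,x)\le d_H(w_i,x)$, and $d_{G'}(w_i,x)=d_{\varphi(G)}(\varphi(w_i),\varphi(x))$ via the isomorphism finishes this direction.

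For the reverse direction, suppose $\varphi$ is a $W$-resolved embedding of $G$ in $P_{D+1}^{\boxtimes,k}$ and take $H=G'$. Since $\varphi$ is an embedding of $G$, we have $G'\in\super(G)$ (with $G$ a spanning subgraph of $G'$). To see that $W$ resolves $G'$, let $x\neq y$ in $V(G)$; by injectivity of $\varphi$ there is an index $i$ with $\varphi(x)_i\neq\varphi(y)_i$, and the $W$-resolved condition rewrites this as $d_{\varphi(G)}(\varphi(w_i),\varphi(x))\neq d_{\varphi(G)}(\varphi(w_i),\varphi(y))$; transporting along the isomorphism $G'\cong\varphi(G)$ yields $d_{G'}(w_i,x)\neq d_{G'}(w_i,y)$, so $w_i$ resolves $x$ and $y$ in $G'$.

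The main obstacle is the verification, in the forward direction, that $\varphi$ is genuinely $W$-resolved: one must argue that the abstract distances $d_H(w_i,x)$ are faithfully realized inside the embedded copy $\varphi(G)$, which is not automatic, since $\varphi(G)$ may carry edges with no counterpart in $H$ (a priori allowing shorter $\varphi(w_i)$--$\varphi(x)$ paths) while $H$ may carry edges not present in $G$. The two-sided estimate above is exactly what reconciles these: the strong-product geometry forces $d_{\varphi(G)}\ge d_H$ on the relevant pairs, while the triangle inequality forces every edge of $H$ to survive in $G'$ and hence $d_{\varphi(G)}\le d_H$. The remaining points — well-definedness into $P_{D+1}^{\boxtimes,k}$, injectivity, edge preservation, and the entire reverse implication — are routine unwindings of the definitions.
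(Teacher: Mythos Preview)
Your proof is correct and follows essentially the same route as the paper's. The paper separates out the special case $H=G$ as Lemma~\ref{charlemma1} and then, for general $H\in\super(G)$, invokes that lemma together with the one-line observation that $\varphi(G)=\varphi(H)$ (both are the subgraph of $P_{D+1}^{\boxtimes,k}$ induced by the same vertex set $\varphi(V(G))=\varphi(V(H))$); you instead inline the lemma and derive the ``$\leq$'' half via $E(H)\subseteq E(G')$, which is exactly the same content phrased pointwise. The reverse direction is identical to the paper's.
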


The following corollary is immediate, and gives a  geometric interpretation of the threshold dimension.

\begin{corollary}\label{ThresholdEmbedding}
Let $G$ be a connected graph of diameter $D$. Then $\tau(G)$ is the minimum cardinality of a set $W\subseteq V(G)$ such that there is a $W$-resolved embedding of $G$ in $P^{\boxtimes,|W|}_{D+1}$.
\end{corollary}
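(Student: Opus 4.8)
The corollary is essentially a restatement of Theorem~\ref{Correspondence} after optimizing over $W$: since $\tau(G)=\min_{H\in\super(G)}\beta(H)$ and $\beta(H)$ is the smallest size of a resolving set of $H$, the quantity $\tau(G)$ is precisely the least $|W|$ for which $W$ resolves \emph{some} $H\in\super(G)$, and Theorem~\ref{Correspondence} rewrites this last condition as the existence of a $W$-resolved embedding of $G$ in $P_{D+1}^{\boxtimes,|W|}$. So the plan is to prove Theorem~\ref{Correspondence} and then read off the corollary; for the theorem, I would use in both directions the single natural candidate map.

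For the forward implication, suppose $W=\{w_1,\dots,w_k\}$ resolves some $H\in\super(G)$, and set $\varphi(x)=\bigl(d_H(w_1,x),\dots,d_H(w_k,x)\bigr)$. I would check three routine points. First, $\varphi$ lands in $P_{D+1}^{\boxtimes,k}$: adding edges cannot increase distances, so $0\le d_H(w_i,x)\le d_G(w_i,x)\le D$, i.e.\ each coordinate lies in $\{0,\dots,D\}=V(P_{D+1})$. Second, $\varphi$ is injective: that is exactly the statement that $W$ resolves $H$. Third, $\varphi$ is an embedding: if $xy\in E(G)\subseteq E(H)$ then $|d_H(w_i,x)-d_H(w_i,y)|\le 1$ for every $i$ by the triangle inequality, so $\varphi(x)\varphi(y)\in E(P_{D+1}^{\boxtimes,k})$ by Fact~\ref{distanceinstrongproducts}.

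The one substantive point is that $\varphi$ is \emph{$W$-resolved}, i.e.\ that the $i$th coordinate $d_H(w_i,x)$ of $\varphi(x)$ equals $d_{\varphi(G)}(\varphi(w_i),\varphi(x))$. Let $G'\in\super(G)$ be the graph corresponding to $\varphi(G)$, so that $E(G')=\{xy\colon\ |d_H(w_i,x)-d_H(w_i,y)|\le 1 \text{ for all } i\}$; via the isomorphism $\varphi\colon G'\to\varphi(G)$ the claim becomes $d_{G'}(w_i,x)=d_H(w_i,x)$. One inequality is free: every edge of $H$ satisfies the defining condition of $E(G')$, so $E(H)\subseteq E(G')$ and hence $d_{G'}(w_i,x)\le d_H(w_i,x)$. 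For the reverse inequality I would take a shortest path from $w_i$ to $x$ in $G'$, say $w_i=z_0,z_1,\dots,z_m=x$, and telescope: consecutive $z_j$ are $G'$-adjacent, so $d_H(w_i,\cdot)$ changes by at most $1$ at each step, whence $d_H(w_i,x)=|d_H(w_i,z_m)-d_H(w_i,z_0)|\le m=d_{G'}(w_i,x)$. Carrying out this telescoping carefully, while keeping the graph $H$ and its ``closure'' $G'$ distinct, is the main obstacle.

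For the converse, given a $W$-resolved embedding $\varphi$ of $G$ in $P_{D+1}^{\boxtimes,k}$, I would take $H$ to be the graph $G'\in\super(G)$ corresponding to $\varphi(G)$. Since $\varphi$ is an isomorphism from $G'$ to $\varphi(G)$, it suffices to show that $\varphi(W)$ resolves $\varphi(G)$; but the $W$-resolved condition says that the coordinates of $\varphi(x)$ are exactly the distances in $\varphi(G)$ from $\varphi(x)$ to the $\varphi(w_i)$, and injectivity of $\varphi$ then forces distinct vertices of $\varphi(G)$ to have distinct distance vectors to $\varphi(W)$, so $\varphi(W)$ resolves $\varphi(G)$ and hence $W$ resolves $G'$. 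Finally, the corollary follows by minimizing $|W|$: a set $W$ realizing the minimum in the corollary resolves some $H\in\super(G)$ by Theorem~\ref{Correspondence}, so $\tau(G)\le|W|$; conversely, a basis $W$ of any $H\in\super(G)$ with $\beta(H)=\tau(G)$ yields, again by the theorem, a $W$-resolved embedding of $G$ in $P_{D+1}^{\boxtimes,|W|}$ with $|W|=\tau(G)$, giving the reverse inequality.
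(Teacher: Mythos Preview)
Your proposal is correct and follows essentially the same route as the paper: the paper also proves Theorem~\ref{Correspondence} by defining $\varphi(x)$ to be the distance vector to $W$ in $H$ (factored there through Lemma~\ref{charlemma1} applied to $H$), and then observes that the corollary follows immediately by minimizing over $|W|$. The only cosmetic difference is that for the inequality $d_H(w_i,x)\le d_{G'}(w_i,x)$ the paper invokes Fact~\ref{distanceinstrongproducts} to bound $d_{\varphi(G)}\ge d_{P_{D+1}^{\boxtimes,k}}$ directly, whereas you telescope along a shortest $G'$-path; these are the same argument in different clothing.
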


Before we proceed with the proof of Theorem~\ref{Correspondence}, we prove that, given a connected graph $G$ of diameter $D$ with resolving set $W$, the map which sends every vertex to its vector of distances to $W$ is in fact a $W$-resolved embedding of $G$ in $P^{\boxtimes,k}_{D+1}$.

\begin{lemma} \label{charlemma1}
Let $G$ be a connected graph of diameter $D$ with resolving set $W=\{w_1,w_2,\dots,w_k\}\subseteq V(G)$.  Then there is a $W$-resolved embedding of $G$ in $P_{D+1}^{\boxtimes,k}$.
\end{lemma}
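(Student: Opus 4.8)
The plan is to exhibit the embedding explicitly. Define $\varphi\colon V(G)\to V\!\left(P_{D+1}^{\boxtimes,k}\right)$ by
\[
\varphi(x)=\bigl(d_G(w_1,x),d_G(w_2,x),\dots,d_G(w_k,x)\bigr).
\]
Since $G$ has diameter $D$, each coordinate $d_G(w_i,x)$ lies in $\{0,1,\dots,D\}=V(P_{D+1})$, so $\varphi$ does map into $V\!\left(P_{D+1}^{\boxtimes,k}\right)$. I would first check that $\varphi$ is an embedding. Injectivity is immediate from the hypothesis that $W$ resolves $G$: if $\varphi(x)=\varphi(y)$, then $d_G(w_i,x)=d_G(w_i,y)$ for every $i$, so no vertex of $W$ resolves $x$ and $y$, whence $x=y$. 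For the edge relation, if $xy\in E(G)$, then the triangle inequality gives $|d_G(w_i,x)-d_G(w_i,y)|\le 1$ for every $i$; since $\varphi(x)\ne\varphi(y)$, Fact~\ref{distanceinstrongproducts} shows $\varphi(x)\varphi(y)\in E\!\left(P_{D+1}^{\boxtimes,k}\right)$. Hence $\varphi$ is an embedding of $G$ in $P_{D+1}^{\boxtimes,k}$.

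It remains to verify that $\varphi$ is $W$-resolved, i.e.\ that $d_{\varphi(G)}(\varphi(w_i),\varphi(x))=d_G(w_i,x)$ for all $x\in V(G)$ and all $i$; this is the only nontrivial point, the subtlety being that $\varphi(G)$ is the subgraph of $P_{D+1}^{\boxtimes,k}$ \emph{induced} by the image of $\varphi$, so it may contain edges not arising from $E(G)$, and a priori distances in $\varphi(G)$ could be strictly smaller than the corresponding distances in $G$. I would bound $d_{\varphi(G)}(\varphi(w_i),\varphi(x))$ on both sides. For the upper bound, take a geodesic $w_i=u_0,u_1,\dots,u_m=x$ in $G$ with $m=d_G(w_i,x)$; since $\varphi$ is an embedding, each $\varphi(u_j)\varphi(u_{j+1})$ is an edge of $\varphi(G)$, so $\varphi(u_0),\dots,\varphi(u_m)$ is a walk of length $m$ in $\varphi(G)$, giving $d_{\varphi(G)}(\varphi(w_i),\varphi(x))\le m=d_G(w_i,x)$. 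For the lower bound, since $\varphi(G)$ is a subgraph of $P_{D+1}^{\boxtimes,k}$ we have $d_{\varphi(G)}(\varphi(w_i),\varphi(x))\ge d_{P_{D+1}^{\boxtimes,k}}(\varphi(w_i),\varphi(x))$, and by Fact~\ref{distanceinstrongproducts} the right-hand side equals $\max_j|d_G(w_j,w_i)-d_G(w_j,x)|$, which is at least the $i$th term $|d_G(w_i,w_i)-d_G(w_i,x)|=d_G(w_i,x)$. Combining the two bounds yields $d_{\varphi(G)}(\varphi(w_i),\varphi(x))=d_G(w_i,x)$, which is precisely the $i$th coordinate of $\varphi(x)$.

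Putting this together, $\varphi$ is an embedding of $G$ in $P_{D+1}^{\boxtimes,k}$ with $\varphi(x)=\bigl(d_{\varphi(G)}(\varphi(w_1),\varphi(x)),\dots,d_{\varphi(G)}(\varphi(w_k),\varphi(x))\bigr)$ for every $x\in V(G)$, i.e.\ a $W$-resolved embedding, which is what we want. I do not expect a genuine obstacle here; the one step that deserves to be spelled out carefully is the lower-bound argument above, since it is exactly what rules out the pathological possibility that passing from $G$ to the induced subgraph $\varphi(G)$ shortens the distances from the $\varphi(w_i)$.
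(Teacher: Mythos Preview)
Your proof is correct and follows essentially the same approach as the paper: define $\varphi$ by the distance vector to $W$, verify injectivity and edge preservation, and then sandwich $d_{\varphi(G)}(\varphi(w_i),\varphi(x))$ between $d_G(w_i,x)$ (via the image of a $G$-geodesic) and $d_{P_{D+1}^{\boxtimes,k}}(\varphi(w_i),\varphi(x))\ge d_G(w_i,x)$ (via Fact~\ref{distanceinstrongproducts}). Your write-up is in fact slightly more careful than the paper's in two places: you explicitly note that the coordinates land in $\{0,\dots,D\}$, and you correctly write the lower bound as an inequality rather than an equality.
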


\begin{proof}
Define $\varphi:V(G)\rightarrow V\left(P_{D+1}^{\boxtimes,k}\right)$ by
\[
\varphi(x)=\left(d_G(w_1,x),d_G(w_2,x),\dots,d_G(w_k,x)\right)
\]
for all $x\in V(G)$.  Since $W$ resolves $G$, it follows immediately that $\varphi$ is injective.  Further, for every pair of distinct vertices $x$ and $y$ in $P_{D+1}^{\boxtimes,k}$, if $\varphi(x)\varphi(y)\not\in E\left(P_{D+1}^{\boxtimes,k}\right)$, then we must have $|d_G(w_i,x)-d_G(w_i,y)|>1$ for some $i\in\{1,\dots,k\}$, and thus $xy\not\in E(G)$.  So $\varphi$ is an embedding of $G$ in $P_{D+1}^{\boxtimes,k}$.

It remains to show that
\[
\varphi(x)=\left(d_{\varphi(G)}(\varphi(w_1),\varphi(x)),d_{\varphi(G)}(\varphi(w_2),\varphi(x)),\dots,d_{\varphi(G)}(\varphi(w_k),\varphi(x))\right)
\]
for all $x\in V(G)$.  We claim that $d_{\varphi(G)}(\varphi(w_i),\varphi(x))=d_G(w_i,x)$ for all $i\in\{1,\dots,k\}$ and all $x\in V(G)$, from which the desired statement follows.  Let $i\in\{1,\dots,k\}$ and let $x\in V(G)$.  First of all, since $\varphi(G)$ obviously contains a copy of $G$ as a subgraph, we must have $d_G(w_i,x)\geq d_{\varphi(G)}(\varphi(w_i),\varphi(x))$.  On the other hand, since $\varphi(G)$ is a subgraph of $P_{D+1}^{\boxtimes,k}$, we have from Fact~\ref{distanceinstrongproducts} that
\begin{align*}
d_{\varphi(G)}(\varphi(w_i),\varphi(x))&=\max\{|d_G(w_j,x)-d_G(w_j,w_i)|\colon\ 1\leq j\leq k\}\\
&\geq |d_G(w_i,x)-d_G(w_i,w_i)|\\
&=d_G(w_i,x).
\end{align*}
We conclude that $d_{\varphi(G)}(\varphi(w_i),\varphi(x))=d_G(w_i,x)$, which completes the proof of the lemma.
\end{proof}

We now proceed with the proof of the main result of this section.

\begin{proof}[Proof of Theorem~\ref{Correspondence}]
$(\Rightarrow)$ Suppose that $W$ is a resolving set for some graph $H\in\super(G)$.  By Lemma \ref{charlemma1}, there is a $W$-resolved embedding $\varphi$ of $H$ in $P_{D+1}^{\boxtimes,k}$.  We show that $\varphi$ is also a $W$-resolved embedding of $G$ in $P_{D+1}^{\boxtimes,k}$.  By definition, we must have
\[
\varphi(x) =\left(d_{\varphi(H)}(\varphi(w_1), \varphi(x)), \ldots, d_{\varphi(H)}(\varphi(w_k), \varphi(x))\right)
\]
for all $x\in V(H).$
Since $G$ is a spanning subgraph of $H$, we have $V(G)=V(H)$, and the map $\varphi$ is also an embedding of $G$ in $P_{D+1}^{\boxtimes,k}$; if $xy\in E(G)$, then $xy\in E(H)$, and in turn $\varphi(x)\varphi(y)\in E\left(P_{D+1}^{\boxtimes,k}\right)$.   Further, we clearly have $\varphi(G)=\varphi(H)$, so
\begin{align*}
\varphi(x) &=\left(d_{\varphi(H)}(\varphi(w_1), \varphi(x)), \ldots, d_{\varphi(H)}(\varphi(w_k), \varphi(x))\right)\\
&=\left(d_{\varphi(G)}(\varphi(w_1), \varphi(x)), \ldots, d_{\varphi(G)}(\varphi(w_k), \varphi(x))\right).
\end{align*}
We conclude that $\varphi$ is a $W$-resolved embedding of $G$ in $P_{D+1}^{\boxtimes,k}$.

\smallskip

\noindent
$(\Leftarrow)$ Let $\varphi$ be a $W$-resolved embedding of $G$ in $P_{D+1}^{\boxtimes,k}$. From the definition of $W$-resolved embedding, we know that $\varphi$ is injective, and that
\[
\varphi(x)=\left(d_{\varphi(G)}(\varphi(w_1),\varphi(x)),d_{\varphi(G)}(\varphi(w_2),\varphi(x)),\dots,d_{\varphi(G)}(\varphi(w_k),\varphi(x))\right)
\]
for all $x\in V(G)$.  Therefore, every vertex $\varphi(x)\in V(\varphi(G))$ is uniquely determined by its distances in $\varphi(G)$ to members of the set $\varphi(W)=\{\varphi(w_1),\dots,\varphi(w_k)\}$, i.e., the set $\varphi(W)$ is a resolving set for $\varphi(G)$.
It follows that $W$ is a resolving set for the corresponding graph $H\in\super(G)$ with edge set $E(H)=\{xy\colon\ \varphi(x)\varphi(y)\in E(\varphi(G))\}$.
\end{proof}

\subsection{Applications}

We now discuss some applications of Theorem~\ref{Correspondence} and Lemma~\ref{charlemma1}.  First of all, Theorem~\ref{Correspondence} helped us to find trees of arbitrarily high metric dimension that have threshold dimension $2$. Let $L_{3n}$ be the tree obtained from the path $P_n$ by attaching two leaves to each vertex of $P_n$.  By Theorem~\ref{Slater}, we have $\beta(L_{3n})=n$. Figure~\ref{tree1z2} illustrates a $W$-resolved embedding of $L_{3n}$ in $P_{n+1}\boxtimes P_{n+1}$, where $|W|=2$, so we conclude that $\tau(L_{3n})=2$.  Thus, we have the following.

\begin{proposition}\label{LargeMetricSmallThreshold}
For every integer $b>2$, there is a tree $T$ with $\beta(T)=b$ and $\tau(T)=2.$
\end{proposition}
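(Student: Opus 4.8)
The plan is to construct, for each integer $b > 2$, an explicit tree $T$ with $\beta(T) = b$ together with a $W$-resolved embedding of $T$ into a strong product of two sufficiently long paths, and then invoke Corollary~\ref{ThresholdEmbedding} (equivalently Theorem~\ref{Correspondence}) to conclude $\tau(T) \le 2$; since $T$ is not a path, $\tau(T) \ge 2$, giving equality. The natural candidate, already foreshadowed in the text, is the ``caterpillar'' $L_{3b}$ obtained from the path $P_b$ by attaching two pendant leaves at every vertex. By Theorem~\ref{Slater}, every vertex of the spine is an exterior major vertex of terminal degree $3$ (the two attached leaves, plus the spine continuing) at the two ends, and the internal spine vertices have terminal degree $2$; a careful count via $\beta(T) = \sum_{v \in S}(\ter(v) - 1)$ yields $\beta(L_{3b}) = b$. (One should double-check the end-of-spine bookkeeping: the two spine endpoints each have degree $3$ and terminal degree $3$, contributing $2$ each, while the $b-2$ internal spine vertices have terminal degree $2$, contributing $1$ each — but this overcounts, so the honest computation is that the exterior major vertices are exactly the two ends... actually the cleanest route is simply to cite that $\beta(L_{3b}) = b$ as asserted, or to note $L_{3b}$ has $2b$ leaves arranged so that a basis needs one vertex from all but one limb at each of the $b$ spine vertices.)

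The heart of the argument is exhibiting the embedding. I would set $P = P_{b+1}$ with $V(P) = \{0, 1, \dots, b\}$, work inside $P_{b+1} \boxtimes P_{b+1}$, and choose the two resolving vertices to be placed near opposite ``corners'' — say $\varphi(w_1)$ with small first coordinate and $\varphi(w_2)$ with small second coordinate — so that a spine vertex indexed $i \in \{1, \dots, b\}$ maps to a point like $(i, b+1-i)$ or a nearby diagonal point, and its two leaves map to the two lattice points adjacent to it that are ``off the diagonal'' in the two perpendicular directions. The key checks are: (i) $\varphi$ is injective; (ii) $\varphi$ preserves edges, i.e. adjacent vertices of $L_{3b}$ map to points differing by at most $1$ in each coordinate (Fact~\ref{distanceinstrongproducts}); and (iii) the resolved condition, namely that each coordinate of $\varphi(x)$ equals the distance in $\varphi(L_{3b})$ from $\varphi(w_i)$ to $\varphi(x)$. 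For (iii) one shows, as in the proof of Lemma~\ref{charlemma1}, that distances only shrink when passing to a subgraph of the strong product and that the diagonal layout forces a monotone path realizing each coordinate as an actual graph distance.

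I would organize the write-up as: first fix the coordinates of $\varphi$ by an explicit formula (a table or a piecewise definition for spine vertices and for the two leaf-types), then verify injectivity by inspection of the formula, then verify edge-preservation by a short case analysis (spine-to-spine, spine-to-its-own-leaf), then verify the $W$-resolved property by arguing both inequalities $d_G(w_i, x) \ge d_{\varphi(G)}(\varphi(w_i),\varphi(x)) \ge |(\varphi(x))_i - (\varphi(w_i))_i|$ collapse — the right inequality being Fact~\ref{distanceinstrongproducts} applied inside $\varphi(G)$, and the left being that $\varphi(G)$ contains $G$... wait, more carefully: one needs that the $i$th coordinate is realized, which follows because along the spine (and its leaves) there is an induced path on which the $i$th coordinate changes by exactly $1$ at each step down to $\varphi(w_i)$. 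Finally, apply Corollary~\ref{ThresholdEmbedding} to get $\tau(L_{3b}) \le 2$, and combine with $\tau(L_{3b}) \ge 2$ (as $L_{3b} \not\cong P_n$).

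The main obstacle I anticipate is getting the embedding coordinates exactly right so that all three conditions hold simultaneously: it is easy to write down a diagonal placement that is injective and edge-preserving but for which some coordinate of $\varphi(x)$ fails to equal the true distance to $\varphi(w_i)$ in $\varphi(G)$ (for instance if the extra strong-product edges among leaf-images create shortcuts, or if a leaf is placed so that its coordinate is smaller than its graph-distance to $w_i$). Pinning down a placement — likely the one in Figure~\ref{tree1z2} — where the spine occupies an anti-diagonal and the two leaves at spine vertex $i$ sit at the two lattice neighbours that increase the distance to $w_1$ and to $w_2$ respectively, and then checking that no unwanted shortcuts appear, is the delicate part; the rest is routine verification.
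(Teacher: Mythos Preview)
Your approach is exactly the paper's: take $T = L_{3b}$, exhibit a $W$-resolved embedding into $P_{b+1}^{\boxtimes,2}$ (the paper places spine vertex $i$ at $(i,i)$ with its two leaves at $(i-1,i)$ and $(i,i-1)$, and takes $W$ to be the two leaves $\{(0,1),(1,0)\}$ at the first spine vertex), and invoke Theorem~\ref{Correspondence}. Your $\beta$-bookkeeping is off, though: in $L_{3b}$ \emph{every} spine vertex, endpoints included, is an exterior major vertex of terminal degree exactly $2$ (the spine-direction at an endpoint leads toward another major vertex, not a terminal leaf), so each contributes $1$ and $\beta(L_{3b}) = b$ directly from Theorem~\ref{Slater}.
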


\begin{figure}[t]
	
	\centering
	\begin{subfigure}[t]{0.49\textwidth}
		\centering
\begin{tikzpicture}[scale=0.75]
  \foreach \x in {0,...,7}
    \foreach \y in {0,...,7}
       {
       \vertex[opacity=0.25]  (\x\y) at (\x,\y) {};
       }
  \foreach \x in {0,1,2,3,5,6}
    \foreach \y in {0,1,2,3,5,6}
    {
    \pgfmathtruncatemacro{\a}{\x+1}
    \pgfmathtruncatemacro{\b}{\y+1}
    \path[opacity=0.25]
    (\x\y) edge (\a\b)
    (\x\b) edge (\a\y);
    }
\foreach \x in {0,...,3,5,6}
    \foreach \y in {0,...,4,6,7}
    {
    \pgfmathtruncatemacro{\a}{\x+1}
    \path[opacity=0.25]
    (\x\y) edge (\a\y)
    (\y\x) edge (\y\a);
    }
\foreach \x in {0,...,3,5,6}
{
\pgfmathtruncatemacro{\z}{\x+1}
\path[opacity=0.25]
(\x5) edge (\z5)
(5\x) edge (5\z)
;
}

\foreach \x in {0,...,4}
{
\node[left] at (0,\x) {\scriptsize \x};
\node[below] at (\x,0) {\scriptsize \x};
}

\node[left] at (0,5) {\scriptsize $n-2$};
\node[left] at (0,6) {\scriptsize $n-1$};
\node[left] at (0,7) {\scriptsize $n$};
\node[below] at (5,0) {\scriptsize $n-2$};
\node[below] at (6,0) {\scriptsize $n-1$};
\node[below] at (7,0) {\scriptsize $n$};

\foreach \x in {0,...,3,5,6}
{
\pgfmathtruncatemacro{\y}{\x+1}
\vertex (\x\y) at (\x,\y) {};
\vertex (\y\x) at (\y,\x) {};
\vertex (\y\y) at (\y,\y) {};
}
\vertex (55) at (5,5) {};
\foreach \x in {0,...,3,5,6}
{
\pgfmathtruncatemacro{\y}{\x+1}
\path[line width=1.5pt]
(\x\y) edge (\y\y)
(\y\x) edge (\y\y)
;
}
\foreach \x in {0,1,2,5}
{
\pgfmathtruncatemacro{\y}{\x+1}
\pgfmathtruncatemacro{\z}{\x+2}
\path[line width=1.5pt]
(\y\y) edge (\z\z)
;
}
\path[line width=1.5pt]
(55) edge (66);
\whitevertex (b) at (0,1) {};
\whitevertex (c) at (1,0) {};

\node at (2,4.5) {\rotatebox[origin=c]{90}{$\cdots$}};
\node at (4.5,2) {$\cdots$};
\node at (6,4.5) {\rotatebox[origin=c]{90}{$\cdots$}};
\node at (4.5,6) {$\cdots$};
\node at (4.5,4.5) {\rotatebox[origin=c]{45}{$\cdots$}};
\end{tikzpicture}
		\caption{The embedding $\varphi$ of $L_{3n}$ in $P_n\boxtimes P_n$.}
		\label{tree1z2a}

	\end{subfigure}	
	\hfill
	\centering
	\begin{subfigure}[t]{0.49\textwidth}
		\centering
\begin{tikzpicture}[scale=0.75]
  \foreach \x in {0,...,7}
    \foreach \y in {0,...,7}
       {
       \vertex[opacity=0.25]  (\x\y) at (\x,\y) {};
       }
  \foreach \x in {0,1,2,3,5,6}
    \foreach \y in {0,1,2,3,5,6}
    {
    \pgfmathtruncatemacro{\a}{\x+1}
    \pgfmathtruncatemacro{\b}{\y+1}
    \path[opacity=0.25]
    (\x\y) edge (\a\b)
    (\x\b) edge (\a\y);
    }
\foreach \x in {0,...,3,5,6}
    \foreach \y in {0,...,4,6,7}
    {
    \pgfmathtruncatemacro{\a}{\x+1}
    \path[opacity=0.25]
    (\x\y) edge (\a\y)
    (\y\x) edge (\y\a);
    }
\foreach \x in {0,...,3,5,6}
{
\pgfmathtruncatemacro{\z}{\x+1}
\path[opacity=0.25]
(\x5) edge (\z5)
(5\x) edge (5\z)
;
}

\foreach \x in {0,...,4}
{
\node[left] at (0,\x) {\scriptsize \x};
\node[below] at (\x,0) {\scriptsize \x};
}

\node[left] at (0,5) {\scriptsize $n-2$};
\node[left] at (0,6) {\scriptsize $n-1$};
\node[left] at (0,7) {\scriptsize $n$};
\node[below] at (5,0) {\scriptsize $n-2$};
\node[below] at (6,0) {\scriptsize $n-1$};
\node[below] at (7,0) {\scriptsize $n$};

\foreach \x in {0,...,3,5,6}
{
\pgfmathtruncatemacro{\y}{\x+1}
\vertex (\x\y) at (\x,\y) {};
\vertex (\y\x) at (\y,\x) {};
\vertex (\y\y) at (\y,\y) {};
}
\vertex (55) at (5,5) {};
\foreach \x in {0,...,3,5,6}
{
\pgfmathtruncatemacro{\y}{\x+1}
\path[line width=1.5pt]
(\x\y) edge (\y\y)
(\y\x) edge (\y\y)
(\x\y) edge (\y\x);
}
\foreach \x in {0,1,2,5}
{
\pgfmathtruncatemacro{\y}{\x+1}
\pgfmathtruncatemacro{\z}{\x+2}
\path[line width=1.5pt]
(\x\y) edge (\y\z)
(\y\x) edge (\z\y)
(\y\y) edge (\z\z)
(\y\y) edge (\y\z)
(\y\y) edge (\z\y);
}
\path[line width=1.5pt]
(55) edge (56)
(55) edge (65)
(55) edge (66);
\whitevertex (b) at (0,1) {};
\whitevertex (c) at (1,0) {};

\node at (2,4.5) {\rotatebox[origin=c]{90}{$\cdots$}};
\node at (4.5,2) {$\cdots$};
\node at (6,4.5) {\rotatebox[origin=c]{90}{$\cdots$}};
\node at (4.5,6) {$\cdots$};
\node at (4.5,4.5) {\rotatebox[origin=c]{45}{$\cdots$}};
\end{tikzpicture}
		\caption{The graph $\varphi(L_{3n})$.}
		\label{tree1z2b}
	\end{subfigure}	
	\caption{A $W$-resolved embedding $\varphi$ of the graph $L_{3n}$ in $P_n\boxtimes P_n$.  The vertices of $W$ are coloured white.}
	\label{tree1z2}
\end{figure}
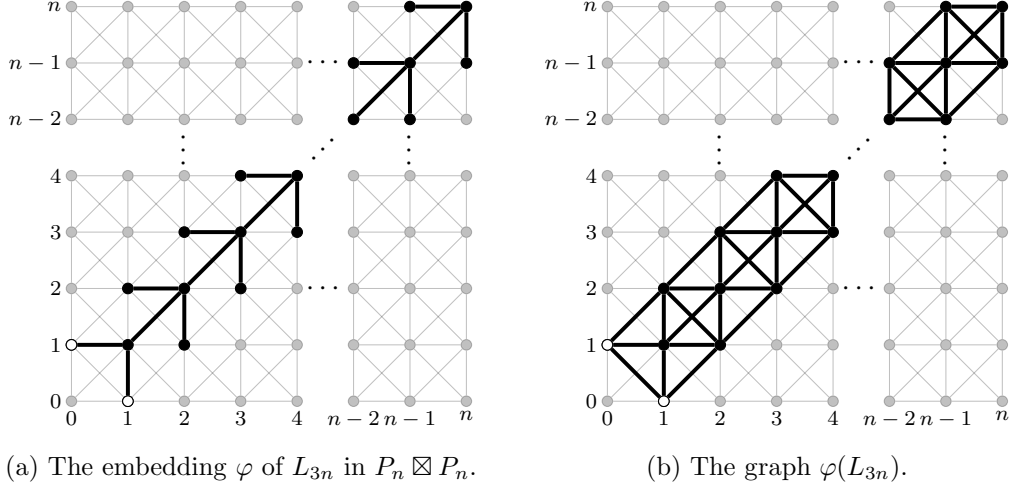

The following corollary of Proposition~\ref{LargeMetricSmallThreshold} is a strengthening of a result of Chartrand et al.~\cite{Chartrandetal2000}, which states that for every $\epsilon >0$, there exists a graph $H$ and a connected subgraph $G$ of $H$ such that $\beta(H)/\beta(G)<\epsilon$.

\begin{corollary}
For every $\epsilon > 0$, there exists a graph $G$ such that $\tau(G)/ \beta(G) <\epsilon$, i.e., there exists a graph $H$ and a connected spanning subgraph $G$ of $H$ such that $\beta(H)/\beta(G)<\epsilon$.
\end{corollary}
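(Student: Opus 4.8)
The plan is to derive this directly from Proposition~\ref{LargeMetricSmallThreshold}. Given $\epsilon>0$, I would first choose an integer $b$ large enough that $b>2$ and $2/b<\epsilon$; explicitly, one may take $b=\max\{3,\lfloor 2/\epsilon\rfloor+1\}$. By Proposition~\ref{LargeMetricSmallThreshold}, there is a tree $T$ (for instance $T=L_{3b}$, the tree obtained from $P_b$ by attaching two leaves to each vertex) with $\beta(T)=b$ and $\tau(T)=2$. Setting $G=T$, we then have
\[
\frac{\tau(G)}{\beta(G)}=\frac{2}{b}<\epsilon,
\]
which establishes the first assertion.

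For the second (equivalent) formulation, I would take $H$ to be a threshold graph of $G$, i.e.\ a graph $H\in\super(G)$ with $\beta(H)=\tau(G)$. Then $G$ is a spanning subgraph of $H$ by definition of $\super(G)$, and $G$ is connected (being a tree), so $H$ is connected as well. Consequently
\[
\frac{\beta(H)}{\beta(G)}=\frac{\tau(G)}{\beta(G)}=\frac{2}{b}<\epsilon,
\]
which gives the desired graph pair $G\subseteq H$.

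There is essentially no obstacle here: the content is entirely contained in Proposition~\ref{LargeMetricSmallThreshold}, and the only thing to verify is the elementary inequality $2/b<\epsilon$ together with the observation that a threshold graph of a connected graph is connected. One might optionally remark that this strengthens the cited result of Chartrand et al.~\cite{Chartrandetal2000} because here $G$ is required to be a \emph{spanning} subgraph of $H$ (and both graphs are connected), a restriction absent from the original statement.
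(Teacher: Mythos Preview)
Your proposal is correct and matches the paper's approach: the paper treats this corollary as an immediate consequence of Proposition~\ref{LargeMetricSmallThreshold}, and your argument spells out exactly that derivation. The only thing the paper does is state the corollary without proof, so your write-up is, if anything, more detailed than what appears there.
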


While the embedding defined in the proof of Lemma~\ref{charlemma1} is simple, it can be used to establish several known results.  We include short, intuitive proofs of these results to emphasize the usefulness of Lemma~\ref{charlemma1}.

\begin{corollary} \label{dimsg}
Let $G$ be a connected graph of order $n$ and diameter $D_G$. If $G$ is a subgraph of $H$, then $\beta(H) \geq \ceeil{\log_{D_G+1}n}$.
\end{corollary}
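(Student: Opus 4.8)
The plan is to argue directly from a basis $W$ of $H$, using the distance-vector embedding of Lemma~\ref{charlemma1}. Write $k=\beta(H)$ and let $W=\{w_1,\dots,w_k\}$ be a basis of $H$. The crucial observation is that although distances in $H$ from vertices of $W$ to vertices of $G$ could a priori be as large as $\mathrm{diam}(H)$, the vertex set $V(G)$ has small ``diameter'' inside $H$: since $G$ is a subgraph of the connected graph part of $H$ containing it (and $G$ is connected), any path of $G$ is a path of $H$, so $d_H(x,y)\le d_G(x,y)\le D_G$ for all $x,y\in V(G)$.

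First I would observe that, by the triangle inequality, for every vertex $w\in V(H)$ the set $\{d_H(w,x)\colon\ x\in V(G)\}$ is contained in the interval $[m,m+D_G]$, where $m=\min_{x\in V(G)}d_H(w,x)$, and hence has at most $D_G+1$ elements. Applying this with $w=w_i$ for each $i\in\{1,\dots,k\}$, the map $f\colon V(G)\to\mathbb{Z}^k$ given by $f(x)=(d_H(w_1,x),\dots,d_H(w_k,x))$ --- equivalently, the restriction to $V(G)$ of the embedding $\varphi$ constructed in the proof of Lemma~\ref{charlemma1} (applied to $H$ with resolving set $W$) --- has image contained in a product of $k$ sets, each of size at most $D_G+1$. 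Thus $|f(V(G))|\le (D_G+1)^k$.

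Next I would note that $f$ is injective: since $W$ is a resolving set for $H$ and $V(G)\subseteq V(H)$, every pair of distinct vertices of $G$ is resolved by some $w_i$ and so receives distinct vectors under $f$. Consequently $n=|V(G)|=|f(V(G))|\le (D_G+1)^{\beta(H)}$. Taking logarithms (legitimate since $D_G\ge 1$, as $G$ is connected of order $n\ge 2$; the case $n=1$ is trivial) gives $\beta(H)\ge\log_{D_G+1}n$, and since $\beta(H)$ is an integer, $\beta(H)\ge\ceeil{\log_{D_G+1}n}$.

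The only genuine idea is the first observation --- that the vertices of $G$, being pairwise at $H$-distance at most $D_G$, have distance vectors to any fixed vertex ranging over at most $D_G+1$ values; everything else (injectivity of the distance-vector map, the counting, the logarithm manipulation) is routine. Note that this argument does not require $G$ to be a spanning subgraph of $H$, and that taking $H=G$ recovers the classical lower bound $\beta(G)\ge\ceeil{\log_{D_G+1}n}$ for the metric dimension in terms of order and diameter.
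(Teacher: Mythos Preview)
Your proof is correct and is essentially the same argument as the paper's: both take a basis $W$ of $H$, consider the distance-vector map $x\mapsto(d_H(w_1,x),\dots,d_H(w_k,x))$, and use that any two vertices of $G$ are at $H$-distance at most $D_G$ to confine the image of $V(G)$ to a box of side $D_G+1$, giving $n\le (D_G+1)^{\beta(H)}$. The paper phrases the counting via the $W$-resolved embedding into $P_{D_H+1}^{\boxtimes,b}$ and then restricts to a subproduct of shorter paths, whereas you invoke the triangle inequality directly, but the content is identical.
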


\begin{proof}
Let $D_H$ be the diameter of $H$ and suppose $\beta(H)=b$. Then there is a set $W\subseteq V(H)$ of cardinality $b$ that resolves $H$.  By Lemma~\ref{charlemma1}, there is a $W$-resolved embedding of $H$ in $P_{D_H+1}^{\boxtimes,b}$. So $G$ is also embedded in this strong product.  

For every $1 \le i \le b$, let $m_i$ denote the minimum $i^{th}$ co-ordinate and $M_i$ the maximum $i^{th}$ co-ordinate among all vertices of $G$ in this embedding. Since $G$ has diameter $D_G$, we see that $M_i \le \min \{m_i+D_G, D_H\}$. For $1 \le i \le b$, let $P_i$ be the subpath of $P$ induced by the vertices $m_i, m_i+1, \ldots, M_i$. Then $G$ is embedded in the strong product  $P_1 \boxtimes P_2 \boxtimes \ldots \boxtimes P_b$ of paths each of order at most $D_G+1$. It follows that $(D_G+1)^b \geq n$.
\end{proof}

The following result was proven by Hernando et al.~\cite{Hernandoetal2010}.  The specific case $b=2$ was also proven by Javaid et al.~\cite{Javaidteal2008} and Sudhakara et al.~\cite{Sudhakaraetal2009}.  It has often been used to establish a lower bound on the metric dimension of a given graph.

\begin{corollary}\label{2dim}
Let $G$ be a connected graph of diameter $D$ with resolving set $W=\{w_1, \ldots, w_b\}$.  Then for every $1 \leq i \leq b$, and every $1 \leq k \leq D$, we have $|N_k(w_i)| \leq (2k+1)^{b-1}$.
\end{corollary}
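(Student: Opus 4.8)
The plan is to reduce the statement to a counting argument on distance vectors, using Lemma~\ref{charlemma1}. By that lemma, the map $\varphi\colon V(G)\rightarrow V\left(P_{D+1}^{\boxtimes,b}\right)$ defined by $\varphi(x)=\left(d_G(w_1,x),\dots,d_G(w_b,x)\right)$ is an embedding of $G$ in $P_{D+1}^{\boxtimes,b}$; in particular it is injective, which is really just a restatement of the fact that $W$ resolves $G$. So it suffices to bound, for a fixed $i$ and $k$, the number of distinct vectors of the form $\varphi(x)$ arising from vertices $x\in N_k(w_i)$.

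Next I would fix $1\leq i\leq b$ and $1\leq k\leq D$ and examine the coordinates of $\varphi(x)$ for $x\in N_k(w_i)$. The $i$th coordinate is completely determined: $d_G(w_i,x)=k$. For each $j\neq i$, the triangle inequality gives $\left|d_G(w_j,x)-d_G(w_j,w_i)\right|\leq d_G(w_i,x)=k$, so $d_G(w_j,x)$ lies in the set $\{\,d_G(w_j,w_i)-k,\dots,d_G(w_j,w_i)+k\,\}$, which contains at most $2k+1$ integers. Since $\varphi$ is injective on $N_k(w_i)$, the number of such vertices is at most the number of admissible vectors, namely $1\cdot(2k+1)^{b-1}=(2k+1)^{b-1}$, which is the claimed bound.

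I do not expect a real obstacle here: the argument is a direct application of Lemma~\ref{charlemma1} together with the triangle inequality, and the only point requiring a moment's care is checking that pinning the $i$th coordinate while letting each of the remaining $b-1$ coordinates range over an interval of $2k+1$ consecutive integers yields at most $(2k+1)^{b-1}$ vectors (clipping negative values up to $0$, or large values down to the diameter of $G$, can only decrease this count). In fact the full strength of $W$-resolved embeddings is not needed for this corollary — only the injectivity of the distance-vector map — so the proof could equally well be phrased without any reference to strong products.
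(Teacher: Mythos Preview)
Your proposal is correct and follows essentially the same argument as the paper: fix one coordinate at $k$, use the (reverse) triangle inequality to confine each of the remaining $b-1$ coordinates to an interval of length $2k+1$, and conclude by injectivity of the distance-vector map from Lemma~\ref{charlemma1}. Your closing observation that only injectivity (not the full $W$-resolved embedding) is needed is also accurate.
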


\begin{proof}
By symmetry, it suffices to show that $|N_k(w_1)| \leq (2k+1)^{b-1}$ for all $1\leq k\leq D$. Let $\varphi$ be the $W$-resolved embedding of $G$ in $P_{D+1}^{\boxtimes,b}$ defined in the proof of Lemma~\ref{charlemma1} as follows:
\[
\varphi(x)=\left(d_G(w_1,x),d_G(w_2,x),\dots,d_G(w_b,x)\right).
\]
Suppose that $x \in N_k(w_1)$, or equivalently, that $d_G(w_1,x)=k$.  Then for all $2\leq i\leq b$, by the reverse triangle inequality, we have
\[
|d_G(w_i,w_1)-d_G(w_i,x)|\leq d_G(w_1,x)=k.
\]
It follows that there are at most $2k+1$ possible values for the $i$th coordinate $d_G(w_i,x)$ of $\varphi(x)$, for all $2\leq i\leq b$.  We conclude that $|N_k(w_1)| \leq (2k+1)^{b-1}$.
\end{proof}

\subsection{Comparing the threshold dimension and the strong isometric dimension}

By Corollary~\ref{ThresholdEmbedding}, the threshold dimension of a graph $G$ is the smallest integer $k$ for which there is a $W$-resolved embedding of $G$ in $P^{\boxtimes,k}$ for some set $W\subseteq V(G)$ of cardinality $k$ and some sufficiently large path $P$.  Thus, it is natural to ask how the threshold dimension of $G$ compares to the \emph{strong isometric dimension} of $G$, denoted $\mbox{sdim}(G)$, and defined as the minimum integer $k$ such that there is an isometric embedding of $G$ in $P^{\boxtimes,k}$ for some path $P$.  See~\cite[Chapter 15]{ProductHandbook} for a brief survey of results on the strong isometric dimension.

Given a graph $G$ and a set $W\subseteq V(G)$ of cardinality $k$, a $W$-resolved embedding of $G$ in $P^{\boxtimes,k}$ need not be isometric.  See, for example, the embeddings shown in Figure~\ref{ResolvedEmbedding1} and Figure~\ref{ResolvedEmbedding2}, neither of which is isometric.
Note that if $W$ is a resolving set for $G$, then the $W$-resolved embedding of $G$ defined in the proof of Lemma~\ref{charlemma1} preserves the distance between vertices $w\in W$ and $v\in V(G)$, but it does not necessarily preserve the distance between every pair of vertices of $V(G)-W$.  See, for example, the embedding shown in Figure~\ref{ResolvedEmbedding1}.

It is also easy to see that an isometric embedding of $G$ in $P^{\boxtimes,k}$ need not be a $W$-resolved embedding of $G$ for any set $W\subseteq V(G)$ of cardinality $k$.  Take, for example, any embedding $\varphi$ of the complete graph $K_4$ in $P_2\boxtimes P_2$, which is clearly isometric (as it must be an isomorphism).  Since we must have $\varphi(x)=(0,0)$ for some $x\in V(K_4)$, we see immediately that $\varphi$ is not a $W$-resolved embedding for any set $W\subseteq V(K_4)$.

Indeed, there is no general order relation between $\tau(G)$ and $\mbox{sdim}(G)$.  We have already observed that $\tau(K_n)=n-1$, while it is well-known that $\mbox{sdim}(K_n)=\ceeil{\log_2(n)}$ (see~\cite[Theorem~15.4]{ProductHandbook}).  On the other hand, for $n\geq 4$, the cycle $C_n$ has $\tau(C_n)=\beta(C_n)=2$, and $\mbox{sdim}(C_n)=\ceeil{n/2}$ (see~\cite{FitzpatrickNowakowski2000}).  Overall, it appears that the threshold dimension and the strong isometric dimension of a graph are rather distinct measures.

\section{The threshold dimension and reducibility of trees}\label{trees}

In this section we focus on the threshold dimension of trees.  We begin by establishing a sharp upper bound on the threshold dimension of every tree of order $n$.  For every positive integer $n$, we define $g(n)$ to be the least nonnegative integer $d$ such that $2^d+d\geq n$.  Note that $g(n)\leq \log_2(n)$ for all $n$.

\begin{theorem} \label{starmeth}
Let $T$ be a tree of order $n\geq 1$.  Then $\tau(T) \leq g(n)$, and this bound is sharp.
\end{theorem}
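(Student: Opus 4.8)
The statement has two halves --- the inequality $\tau(T)\le g(n)$ and its sharpness --- which I would treat separately; throughout put $d=g(n)$, so that $2^d+d\ge n$. For the upper bound, if $\beta(T)\le d$ there is nothing to prove, since $\tau(T)\le\beta(T)$, so I may assume $\beta(T)\ge d+1$. By the remark following Theorem~\ref{Slater}, $T$ then has more than $d+1$ leaves, so I fix a set $W=\{w_1,\dots,w_d\}$ of $d$ leaves of $T$. I then choose a ``hub'' vertex $z$: if $T$ has a vertex adjacent to all of $W$, let $z$ be one such vertex (it is unique, since two vertices of a tree share at most one neighbour, and $w_1,\dots,w_d$ are leaves); otherwise let $z$ be an arbitrary vertex of $V(T)\setminus W$. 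In either case $z\notin W$, and $N_T(u)\cap W\subsetneq W$ for every $u\in V(T)\setminus(W\cup\{z\})$; and, crucially, since each $w_i$ is a leaf with a unique neighbour in $T$, the sets $N_T(u)\cap W$ (for $u\in V(T)\setminus W$, $u\ne z$) are pairwise disjoint.

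I then build $H\in\super(T)$ in two stages. First add all edges from $z$ to its non-neighbours, so $z$ becomes universal and the graph has diameter at most $2$, without altering $N_T(u)\cap W$ for $u\ne z$. Next, choose a ``code'' $f(u)\subseteq W$ for each $u\in V(T)\setminus W$ with $f(u)\supseteq N_T(u)\cap W$ and $f$ injective, and add the edges $\{uw:u\in V(T)\setminus W,\ w\in f(u)\}$, obtaining a graph $H$ with $N_H(u)\cap W=f(u)$ for all $u\in V(T)\setminus W$. Such an $f$ exists: set $f(z)=W$; for each $u\in V(T)\setminus(W\cup\{z\})$ with $N_T(u)\cap W\ne\emptyset$, set $f(u)=N_T(u)\cap W$, which are nonempty proper subsets of $W$ that are pairwise disjoint, hence distinct and distinct from $W=f(z)$; finally give the remaining vertices of $V(T)\setminus W$ distinct not-yet-used subsets of $W$, which is possible precisely because $|V(T)\setminus W|=n-d\le 2^d$. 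Since $H$ has a universal vertex, $\mbox{diam}(H)\le 2$, and in a graph of diameter at most $2$ a set $W$ is a resolving set if and only if $u\mapsto N_H(u)\cap W$ is injective on $V(H)\setminus W$ (a pair meeting $W$ is resolved by that member of $W$; a pair $u,v$ outside $W$ is resolved by a vertex of $W$ exactly when it is adjacent to precisely one of $u,v$). Hence $W$ resolves $H$, so $\tau(T)\le\beta(H)\le|W|=d=g(n)$.

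For sharpness I would prove $\tau(K_{1,n-1})=g(n)$. The inequality $\le$ is the bound just established. For $\ge$, let $H\in\super(K_{1,n-1})$ be arbitrary; since $H$ contains a spanning star, the centre of that star is a universal vertex of $H$, so $\mbox{diam}(H)\le 2$, and by the criterion above any resolving set $W$ of $H$ satisfies $n-|W|=|V(H)\setminus W|\le 2^{|W|}$, that is $2^{|W|}+|W|\ge n$, which forces $|W|\ge g(n)$. Taking $W$ to be a basis gives $\beta(H)\ge g(n)$ for every such $H$, so $\tau(K_{1,n-1})\ge g(n)$.

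The main obstacle is the injective code assignment $f$: it must respect the unremovable edges of $T$ incident with $W$, accommodate the forced code $f(z)=W$, and still fit inside the $2^d$ subsets of $W$. Choosing $W$ among the leaves of $T$ is exactly what makes this work --- it forces the constraints $N_T(u)\cap W$ to be pairwise disjoint, so no two vertices ever demand the same code --- while choosing the hub $z$ to absorb the code $W$ removes the only remaining conflict; the inequality $n-d\le 2^d$ then closes the argument. The remaining ingredients ($\tau\le\beta$, the leaf bound via Theorem~\ref{Slater}, and the diameter-$2$ resolving criterion) are routine.
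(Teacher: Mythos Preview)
Your proof is correct and follows essentially the same approach as the paper: choose $W$ to be $g(n)$ leaves of $T$, use the fact that leaves have pairwise disjoint $W$-neighbourhoods in $T$ to assign each vertex of $V(T)\setminus W$ a distinct $W$-neighbourhood in some $H\in\super(T)$, and conclude that $W$ resolves $H$. The only differences are cosmetic---the paper does not bother introducing a hub or forcing diameter~$2$ (distinct $W$-neighbourhoods already suffice for resolving), and for sharpness it cites a result of Chartrand et~al.\ rather than giving your self-contained diameter-$2$ counting argument for $K_{1,n-1}$.
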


\begin{proof}
If $\beta(T)\leq g(n)$, then the result follows immediately.  So suppose that $\beta(T)\geq g(n)$.  By Theorem~\ref{Slater}, it follows that $T$ must have more than $g(n)$ leaves.  Let $W$ be a set of exactly $g(n)$ leaves of $T$.  We show that there is a graph $H\in \super(G)$ for which every vertex in the set $V(T)-W$ has a distinct $W$-neighbourhood in $H$.  It follows that $W$ is a resolving set for $H$,
and hence $\tau(T)\leq |W|=g(n)$, as desired.
	
Let $X=V(T)-W$.  By the definition of $g(n)$, we have $n\leq 2^{|W|}+|W|,$ and this means that
\[
|X|=n-|W|\leq 2^{|W|}.
\]
Thus, we have $|X|\leq |\mathcal{P}(W)|$, where $\mathcal{P}(W)$ is the power set of $W$.  We think of the set $\mathcal{P}(W)$ as the set of all possible $W$-neighbourhoods of vertices in $X$.  Let $X_1=\left\{x\in V(T)-W\colon\ N_W^T(x)\neq \emptyset\right\}$, and $X_2=\left\{x\in V(T)-W\colon\ N_W^T(x)=\emptyset\right\}$.  Since $W$ is a set of leaves of $T$, we see that if $x,y\in X_1$, then $N_W^T(x)\cap N_W^T(y)=\emptyset$, and hence $N_W^T(x)$ and $N_W^T(y)$ are distinct.  Let $\mathcal{M}=\left\{N_W^T(x)\colon\ x\in X_1\right\}$, and let $\mathcal{N}=\mathcal{P}(W)-\mathcal{M}$.  Since $|X|\leq \mathcal{P}(W)$, we can assign to every vertex $x\in X_2$ a unique subset $S_x\in \mathcal{N}$.  For every $x\in X_2$, let $E_x=\{xw\colon\ w\in S_x\}$.  Let $E=\cup_{x\in X_2}E_x$.  Then in the graph $G+E$, we have $N_W^{G+E}(x)=S_x$ for every $x\in X_2$.  Since $N_W^{G+E}(x)=N_W^{T}(x)$ for every $x\in X_1$, we see that every vertex $x\in X$ has a unique $W$-neighbourhood in $G+E$.  We conclude that $W$ is a resolving set for $G+E$, and hence $\tau(T)\leq g(n)$.

Finally, we illustrate sharpness of the bound by showing that $\tau(K_{1,n-1})\geq g(n)$. Let $H\in\super(K_{1,n-1})$.  Then $H$ is a connected graph of diameter $2$. By a result of Chartrand et al.~\cite[Theorem 1]{Chartrandetal2000}, we have $\beta(H)\geq g(n)$.  Since $H$ was an arbitrary graph in $\super(K_{1,n-1})$, we conclude that $\tau(K_{1,n-1})\geq g(n)$.
\end{proof}

We now show that every tree with dimension at least $3$ is reducible. In fact, we show that for every tree with dimension at least $3$, there is a single edge whose addition to the tree decreases the dimension.  To aid us in our discussions, we introduce some more terminology. Before proceeding, we encourage the reader to revisit the terminology introduced immediately before Theorem~\ref{Slater}.

Let $T$ be a tree that is not isomorphic to a path.  We say that a limb $L$ of $T$ is \emph{adjacent} to a vertex $v$ in $T$ if some endnode of $L$ is adjacent to $v$ in $T$, i.e.,~if the leaf of $T$ contained in $L$ is a terminal vertex of $v$.  The \emph{core} of $T$, denoted $c(T)$, is the tree obtained from $T$ by deleting all of its limbs.  Note that every leaf of $c(T)$ must be adjacent to at least two limbs of $T$, and hence must have terminal degree at least two in $T$.  For a vertex $x$ of $T$, let $C_1, ..., C_k$ be the components of  $T-x$. Then the \emph{branches} of $T$ at $x$ are the induced subgraphs $B_i=T[V(C_i)\cup \{x\}]$ for $i\in\{1,\dots,k\}$.

\begin{theorem} \label{tree}
Let $T$ be a tree with $\beta(T)\geq 3$. Then there exists an edge $e\in E\left(\overline{T}\right)$ such that $\beta(T+e)<\beta(T)$.
\end{theorem}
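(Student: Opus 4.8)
The plan is to exploit Theorem~\ref{Slater}.  Fix a basis $B$ of $T$ of the form given there: for each exterior major vertex $v$ with $\ter(v)\ge 2$, $B$ contains the leaf of every limb at $v$ except one, which I call the \emph{omitted} limb of $v$.  It suffices to produce an edge $e\in E(\overline T)$ and a vertex $b\in B$ such that $B\setminus\{b\}$ resolves $T+e$, since then $\beta(T+e)\le|B|-1=\beta(T)-1$.  Repeatedly I will use the following \emph{locality principle}: if $e=pq$ with $p$ and $q$ both lying in limbs at one exterior major vertex $v$, then in $T+e$ the only pairwise distances that differ from those in $T$ are between two vertices that both lie in limbs at $v$; in particular, all distances from a vertex outside the limbs at $v$ are unchanged.

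Since $\beta(T)\ge 3$, Theorem~\ref{Slater} leaves two cases: \textbf{(A)} some exterior major vertex $v$ has $\ter(v)\ge 3$; \textbf{(B)} every exterior major vertex has terminal degree exactly $2$, and there are at least three of them.  (In case (A), if $v$ is the only exterior major vertex of terminal degree $\ge 2$ then $T$ must be a spider centred at $v$, for otherwise the core of $T$ has at least two leaves, each an exterior major vertex of terminal degree $\ge 2$.)  In both cases I drop from $B$ the leaf $b=u^\ast$ of a non-omitted limb $L^\ast$ at an exterior major vertex $v$ --- in case (A) with $v$ as given and $L^\ast$ any non-omitted limb, in case (B) with $v$ any of the exterior major vertices and $L^\ast$ its non-omitted limb.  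The crucial reduction is that the only pairs of vertices of $T$ not resolved by $B\setminus\{u^\ast\}$ are the pairs $(x_j,x_j')$, where $x_j$ (resp.\ $x_j'$) is the vertex at distance $j$ from $v$ on $L^\ast$ (resp.\ on the omitted limb $L''$ of $v$).  Indeed, by the locality principle any vertex sharing the reduced distance vector of $x_j'$ lies at distance $j$ from $v$ in a component of $T-v$ holding no surviving basis vertex, and the only such components are those of $L^\ast$ and $L''$ --- the other limbs of $v$ keep their leaves, and every core-side component of $T-v$ keeps a surviving basis vertex (automatically in case (B); in case (A), because either another exterior major vertex of terminal degree $\ge 2$ lies on the core side, or $T$ is a spider and there is no core side).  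One easy sub-case is settled here: if two limbs at $v$ have equal length, re-choose $B$ so that the omitted limb is a third one (possible since $v$ has at least three limbs) and two non-omitted limbs $L_p,L_q$ satisfy $|L_p|=|L_q|=a$; taking $e=u_pu_q$ and $b=u_q$, the surviving vertex $u_p$ lies at distance $a-j$ from the distance-$j$ vertex of $L_p$ and distance $a+1-j$ from that of $L_q$, so $u_p$ alone separates all vertices of $L_p\cup L_q$ from each other and from $L''$, and by locality nothing else changes.

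It remains, generically, to re-resolve the pairs $(x_j,x_j')$ with one edge.  Let $w$ be a surviving basis vertex lying on neither $L^\ast$ nor $L''$ (one exists since $\beta(T)\ge 3$), let $L_w$ be the limb containing $w$, and set $e=wz$ where $z$ is the vertex of $L^\ast$ at distance $c$ from $v$ with $c\in\{1,2\}$ chosen so that $c\equiv d_T(w,v)\pmod 2$.  Then in $T+e$ the $w$-coordinate of $x_j$ becomes $\min\{d_T(w,v)+j,\,1+|j-c|\}$ while that of $x_j'$ becomes $\min\{d_T(w,v),1+c\}+j$; the parity choice guarantees the former is strictly smaller, so $w$ alone re-resolves every pair $(x_j,x_j')$.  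By the locality principle the detour through $e$ shortens only distances between the interiors of $L^\ast$ and of $L_w$, so the single remaining worry is a new coincidence between some $x_j$ and a vertex of $L_w$; the same parity of $c$ makes the $w$-coordinate of $x_j$ avoid that of every vertex of $L_w$ whose remaining coordinates match, and a second surviving basis vertex (again available since $\beta(T)\ge 3$) finishes the job.  When $|L^\ast|=1$ so that $c=2$ is unavailable, one instead attaches $e$ to the omitted limb $L''$ if $|L''|\ge 2$, and otherwise $v$ has two length-$1$ limbs and the equal-length sub-case applies.

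I expect the last paragraph to be the real work.  Dropping a basis vertex costs only a tightly localized family of unresolved pairs, but the added edge lowers an entire one-parameter family of distances to $w$, and one must check --- splitting on whether $w$ lies in a limb of $v$ or elsewhere, and on the lengths $|L^\ast|,|L''|,|L_w|$ and $d_T(w,v)$ --- that this lowering re-separates precisely the broken pairs without ever reproducing the distance profile of a third vertex.  The parity condition on the attachment point $c$ is exactly the device that defeats the one systematic collision that would otherwise arise, and dealing with the small-parameter boundary cases (limbs of length $1$ or $2$, $d_T(w,v)$ small, $T$ a spider with $\ter(v)=3$) is the bookkeeping that remains.
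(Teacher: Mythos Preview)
Your ``crucial reduction'' is false, and the error propagates through the rest of the argument.  You claim that after dropping $u^\ast$ the only pairs left unresolved by $B\setminus\{u^\ast\}$ are the pairs $(x_j,x_j')$ lying on $L^\ast$ and $L''$.  But take the tree $T$ consisting of the path $p_0p_1p_2p_3p_4$ with two pendant leaves attached at each of $p_0$, $p_2$, $p_4$; then $\beta(T)=3$, every exterior major vertex has terminal degree~$2$, and a Slater basis is $B=\{w',w,u^\ast\}$ with $w'$ a leaf at $p_0$, $w$ a leaf at $p_2$, $u^\ast$ a leaf at $p_4$.  Choosing $v=p_4$ and dropping $u^\ast$, the pair $(p_3,w_2)$---where $w_2$ is the omitted leaf at $p_2$---is \emph{not} resolved by $\{w',w\}$: both have $(w,w')$-distance vector $(2,4)$.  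Neither vertex lies on a limb of~$v$, so your list of broken pairs is incomplete.  Worse, your edge does not repair this: with $w$ the leaf at $p_2$ (so $d_T(w,v)=3$, $c=1$, $z=u^\ast$) one computes that in $T+wz$ the pair $(p_3,w_2)$ still has identical $(w,w')$-vector $(2,4)$.  So the construction fails for this choice of~$w$, and you give no rule for choosing~$w$ that avoids this.

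The paper's proof takes a different route that sidesteps this problem entirely.  In Case~1 ($\ter(v)\ge 3$) it joins two \emph{neighbours} $v_1,v_2$ of $v$ lying on distinct limbs and drops $v_1$ from the basis; the added edge forms a triangle at $v$, so all distances from vertices outside the limbs of $v$ are unchanged, and the verification is short.  In Case~2 (all terminal degrees $\le 2$) it never tries to join a faraway basis leaf directly to $L^\ast$.  Instead it locates two exterior major vertices $v_1,v_2$ sharing a suitable pivot $v$, and joins the limb-neighbour $v_1'$ of $v_1$ to the vertex on the $v$--$v_2'$ path at the \emph{same} distance from $v$ as $v_1'$; this creates a cycle whose symmetry lets one check directly, by positional case analysis on $x$ and $y$, that $\{v_2',u\}$ (with $u$ a third basis vertex) resolves $T+e$.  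The key difference is that the paper never relies on a global claim about which pairs break when a basis vertex is removed; it simply verifies resolution in $T+e$ from scratch.
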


\begin{proof}
We consider two cases.
	
	\smallskip
	
	\noindent {\bf Case 1}: There is an exterior major vertex $v$ of $T$ with terminal degree at least $3$.
	
	\smallskip
	
	\noindent Let $B_1, ..., B_k$ be the branches of $T$ at $v$. Without loss of generality, we may assume that $B_1$ and $B_2$ are paths.  Let $T'=T[V(B_3)\cup \cdots\cup V(B_k)]$.  Let $v_i$ be the neighbour of $v$ in $B_i$ for all $i\in\{1,2\}$.  By Theorem~\ref{Slater}, there is a basis $W$ of $T$ that contains $v_1$ and $v_2$.  Since $\beta(T)\geq 3$, the basis $W$ must also contain some vertex $u$ of $T'$. Let $F = T+v_1v_2$.  See Figure~\ref{3xt} for an illustration of $T$ (and $F$), with the new edge $v_1v_2$ drawn as a dashed line.  We claim that $W'=W-\{v_1\}$ resolves $F$.
	
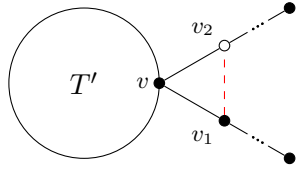
\begin{figure}
\centering
		\begin{tikzpicture}
		\draw (-1,0) circle (1);
		\node at (-1,0) {$T'$};
		\vertex (0) at (0:0) {};
		\node[left] at (0) {\footnotesize $v$};
		\vertex (1) at (-30:1) {};
		\node[below left] at (1) {\footnotesize $v_1$};
		\vertex (2) at (-30:2) {};
		\whitevertex (3) at (30:1) {};
		\node[above left] at (3) {\footnotesize $v_2$};
		\vertex (4) at (30:2) {};
		\path
		(0) edge (1)
		(0) edge (3)	
		;
		\path[dashed,red]
		(1) edge (3);
		\draw[middlearrow={ellipsis}{0.7}] (1) -- (2);
		\draw[middlearrow={ellipsis}{0.7}] (3) -- (4);
		\end{tikzpicture}

		\caption{The tree $T$ in Case 1 of the proof of Theorem~\ref{tree}}
		\label{3xt}
\end{figure}
	
	Let $x$ and $y$ be distinct vertices of $F$.  We will show that some vertex of $W'$ resolves $x$ and $y$.  Firstly, if $x=v$, then $x$ lies on either a shortest $u-y$ path (if $y\in V[B_1]\cup V[B_2]$, or a shortest $v_2-y$ path (if $y\in V[T']$). So either $u$ or $v_2$ resolves $x$ and $y$. Thus, in all subsequent cases we may assume that neither $x$ nor $y$ is equal to $v$.
	
	Suppose that $x\in V(B_i)$ and $y \in V(B_j)$.  Without loss of generality, we may assume that $i\leq j$.  If $i>1$, then $x$ and $y$ lie in the graph $T''=F[V(B_2)\cup V(T')]=T[V(B_2)\cup V(T')]$.  Evidently, the tree $T''$ is an isometric subgraph of $F$.  By Theorem~\ref{Slater}, the set $W'$ is a resolving set for $T''$, and hence $W'$ resolves $x$ and $y$ in $F$.
	
So we may assume that $i=1$.  If $j=1$, then $x$ and $y$ are clearly resolved by $v_2$.	If $j=2$, observe that if $d_F(u,x) = d_F(u,y)$, then $d_F(v_2,y)<d_F(v_2,x)$, so $x$ and $y$ are resolved by either $u$ or $v_2$ in $F$.  Finally, if $j\geq 3$, then we claim again that $x$ and $y$ are resolved by either $u$ or $v_2$ in $F$.  Suppose otherwise that we have both $d_F(u,x)=d_F(u,y)$ and $d_F(v_2,x)=d_F(v_2,y)$.  Then we have
\[
d_F(v,x)=d_F(v_2,x)=d_F(v_2,y)=d_F(v,y)+1.
\]
It follows that
\[
d_F(u,y)=d_F(u,x)=d_F(u,v)+d_F(v,x)=d_F(u,v)+d_F(v,y)+1,
\]
and this is impossible since we must have $d_F(u,v)+d_F(v,y)\geq d_F(u,y)$.
	
We conclude that $W'$ resolves $F$.  Since $|W'|=|W|-1$, this completes the proof in this case.

\smallskip

	\noindent {\bf Case 2}: The terminal degree of every exterior major vertex of $T$ is at most 2.
	
	\smallskip

\noindent	In this case, every exterior major vertex contributes at most 1 to the dimension of $T$.  Since $\beta(T)\geq 3$, the tree $T$ must have at least three vertices of terminal degree $2$, all of which must belong to the core $c(T)$ of $T$.  Note that every leaf of $c(T)$ must have terminal degree exactly $2$.  If $c(T)$ is not a path, then consider the core $c(c(T))$ of $c(T)$.  If $c(c(T))$ is non-trivial, let $v$ be a leaf of $c(c(T))$; otherwise let $v$ be the unique vertex of $c(c(T))$. Then in $c(T)$, the vertex $v$ is adjacent to at least two limbs of $c(T)$.  Let $v_1$ and $v_2$ be the leaves of $c(T)$ at the ends of two of these limbs.  If $c(T)$ is a path, then let $v_1$ and $v_2$ be the leaves of $c(T)$, and let $v$ be any other vertex in $c(T)$.  We have two subcases.
	
	\smallskip
	
	\noindent \textbf{Subcase 2.1:} No internal vertex of the $v_1v_2$-path has terminal degree $2$ in $T$.
	
	\smallskip
	
	\noindent Let $B_1,B_2,\dots,B_k$ be the branches of $T$ at $v$, and assume that $v_1\in V(B_1)$ and $v_2\in V(B_2)$.  Let $T'=T[V(B_3)\cup\cdots\cup V(B_k)]$.  For $i\in \{1,2\}$, let $v'_{i}$ be a neighbour of $v_i$ that lies on a limb of $T$ (see Figure~\ref{2xt}).

\begin{figure}
	\centering
			\begin{tikzpicture}
		\draw (-1,0) circle (1);
		\node at (-1,0) {$T'$};
		\vertex (0) at (0:0) {};
		\node[left] at (0) {\footnotesize $v$};
		\vertex (3) at (30:1) {};
		\vertex (4) at (30:3) {};
		\vertex (5) at (-30:1) {};
		\vertex (6) at (-30:2) {};
		\node[below left] at (6) {\footnotesize $v_1$};
		\vertex (7) at (30:4) {};
		\node[above left] at (7) {\footnotesize $v_2$};
		\vertex (8) at (30:5) {};
		\whitevertex (9) at (1.73*5/2,1.5) {};
		\node[below left] at (9) {\footnotesize $v'_2$};
		\vertex (10) at (1.73*3,1) {};
		\vertex (11) at (-30:3) {};
		\vertex (12) at (-30:4) {};
		\vertex (13) at (1.73*3/2,-0.5) {};
		\node[below right] at (13) {\footnotesize $v'_{1}$};
		\vertex (14) at (1.73*2,0) {};
		\vertex (15) at (30:6) {};
		\path
		(0) edge (3)
		(0) edge (5)
		(7) edge (9)
		(7) edge (8)
		(6) edge (11)
		(6) edge (13)		
		;
		\path[dashed,red]
		(13) edge (4);
		\path[-...] (3) edge (30:2.1);
		\draw[middlearrow={ellipsis}{0.6}] (3) -- (4);
		\draw[middlearrow={ellipsis}{0.7}] (5) -- (6);
		\draw[middlearrow={ellipsis}{0.7}] (8) -- (15);
		\draw[middlearrow={ellipsis}{0.7}] (4) -- (7);
		\draw[middlearrow={ellipsis}{0.7}] (9) -- (10);
		\draw[middlearrow={ellipsis}{0.7}] (13) -- (14);
		\draw[middlearrow={ellipsis}{0.7}] (11) -- (12);
		\draw[|-|] (0.25,-0.02) -- node[above]{\footnotesize $d_1$}(3/2*1.73-0.2,-0.45);
		\draw[|-|,yshift=6pt] (0.1,0.1) -- node[above]{\footnotesize $d_1$}(3/2*1.73-0.1,1.5);
		\end{tikzpicture}
	\caption{The tree $T$ in Subcase 2.1 of the proof of Theorem~\ref{tree}.  Note that every internal vertex of both the $vv_1$-path and the $vv_2$-path may be adjacent to a single limb -- these limbs are not drawn.}
	\label{2xt}
\end{figure}
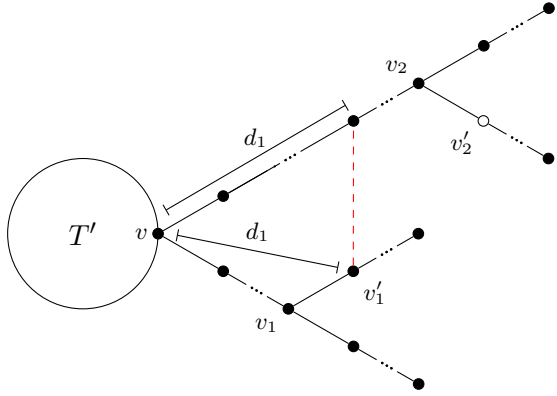

By Theorem~\ref{Slater}, there is a basis $W$ for $T$ that contains $v'_{1}$ and $v'_{2}$.  Since $\beta(T)\geq 3$, there must be some vertex $u\in W$ that lies in $T'-v$.  Let $d_T(v, v'_i)= d_i$ for $i\in\{1,2\}$. We may assume that $d_1  \leq d_2$. Let $(v =) z_0z_1\cdots z_{d_2}(=v'_{2})$ be the $vv'_{2}$-path in $T$. Let $F$ be obtained from $T$ by joining $v'_{1}$ and $z_{d_1}$, i.e., $F =T+v'_{1}z_{d_1}$. The new edge is shown as a dashed line in Figure \ref{2xt}.  Let $W'=W-\{v'_{1}\}$.  We claim that $W'$ resolves $F$.

Let $x$ and $y$ be distinct vertices in $T$.  We will show that $x$ and $y$ are resolved by some vertex of $W'$.  By an argument as in Case 1, we may assume that neither $x$ nor $y$ is equal to $v$.  Suppose that $x \in B_i$ and  $y \in B_j$, and without loss of generality, assume that $i\leq j$.  The case that $i>1$ is handled just as in Case 1, so we may assume that $i=1$.

First suppose that $j=1$.  We claim that $x$ and $y$ are resolved in $F$ by either $u$ or $v'_{2}$.  Suppose towards a contradiction that $d_F(u,x)=d_F(u,y)$ and $d_F(v'_{2},x)=d_F(v'_{2},y)$.  Then we have $d_F(v,x)=d_F(v,y)$ and $d_F(v'_1,x)=d_F(v'_1,y)$.  From the first of these two observations we see that $d_T(v,x)=d_F(v,x)=d_F(v,y)=d_T(v,y)$. So $v'_1$ must resolve $x$ and $y$ in $T$. However, from the second observation we see that $d_T(v'_1,x)=d_F(v'_1,x) =d_F(v'_1,y)=d_T(v'_1,y)$, contradicting the fact that $v'_1$ resolves $x$ and $y$ in $T$.

Suppose next that $j=2$.  Again, we claim that $x$ and $y$ are resolved in $F$ by either $u$ or $v'_2$.  Suppose otherwise that $d_F(u,x)=d_F(u,y)$ and $d_F(v'_2,x)=d_F(v'_2,y)$.  Let $x'$ be the vertex closest to $x$ on the $vv'_1$-path of $T$, and let $y'$ be the vertex closest to $y$ on the $vv'_2$-path of $T$.  Then we have
\begin{align*}
d_F(u,x)+d_F(v'_2,x)&=\left[d_F(u,v)+d_F(v,x')+d_F(x',x)\right]\\
&\hspace{0.5cm}+\left[d_F(v'_2,v'_1)+d_F(v'_1,x')+d_F(x',x)\right]\\
&=d_F(u,v)+2d_F(x',x)+\left[d_F(v,x')+d_F(x',v'_1)+d_F(v'_1,v'_2)\right]\\
&=d_F(u,v)+2d_F(x',x)+d_F(v,v'_2)+1,
\end{align*}
and
\begin{align*}
d_F(u,y)+d_F(v'_2,y)&=\left[d_F(u,v)+d_F(v,y')+d_F(y',y)\right]+\left[d_F(v'_2,y')+d_F(y',y)\right]\\
&=d_F(u,v)+2d_F(y',y)+\left[d_F(v,y')+d_F(v'_2,y')\right]\\
&=d_F(u,v)+2d_F(y',y)+d_F(v,v'_2).
\end{align*}
Since $d_F(u,x)+d_F(v'_2,x)=d_F(u,y)+d_F(v'_2,y)$, we obtain
\[
2d_F(x',x)+1=2d_F(y',y),
\]
which is impossible.

Finally, suppose that $j\geq 3$.  Again, we claim that $x$ and $y$ are resolved in $F$ by either $u$ or $v'_2$.  Suppose otherwise that $d_F(u,x)=d_F(u,y)$ and $d_F(v'_2,x)=d_F(v'_2,y)$.  Let $x'$ be the vertex closest to $x$ on the $vv'_1$-path of $T$, and let $y'$ be the first vertex that lies on both the $yv$-path and the $uv$-path in $T$.  Note that $y'\in T'$, and we may have $y'\in\{u,v,y\}$.  Then we have
\begin{align*}
d_F(u,x)+d_F(v'_2,x)&=\left[d_F(u,v)+d_F(v,x')+d_F(x',x)\right]\\
&\hspace{0.5cm}+\left[d_F(v'_2,v'_1)+d_F(v'_1,x')+d_F(x',x)\right]\\
&=d_F(u,v)+2d_F(x',x)+\left[d_F(v,x')+d_F(x',v'_1)+d_F(v'_1,v'_2)\right]\\
&=d_F(u,v)+2d_F(x',x)+d_F(v,v'_2)+1,
\end{align*}
and
\begin{align*}
d_F(u,y)+d_F(v'_2,y)&=\left[d_F(u,y')+d_F(y',y)\right]+\left[d_F(v'_2,v)+d_F(v,y')+d_F(y',y)\right]\\
&=d_F(u,v)+2d_F(y',y)+d_F(v,v'_2).
\end{align*}
Since $d_F(u,x)+d_F(v'_2,x)=d_F(u,y)+d_F(v'_2,y)$, we obtain
\[
2d_F(x',x)+1=2d_F(y',y),
\]
which is impossible.

	\smallskip

	\noindent \textbf{Subcase 2.2:} Some internal vertex of the $v_1v_2$-path has terminal degree $2$ in $T$.
	
	\smallskip
	
		\begin{figure}
		\centering
			\begin{tikzpicture}
		\draw (-1,0) circle (1);
		\node at (-1,0) {$T'$};
		\vertex (0) at (0:0) {};
		\node[left] at (0) {\footnotesize $w$};
		\vertex (1) at (1,-0.5) {};
		\node[above right] at (1) {\footnotesize $w'$};
		\vertex (2) at (2,-1) {};
		\vertex (3) at (1,0.5) {};
		\vertex (4) at (2,1) {};
		\node[above left] at (4) {\footnotesize $v_{2}$};
		\whitevertex (7) at (3,0.5) {};
		\node[below left] at (7) {\footnotesize $v'_{2}$};
		\vertex (8) at (4,0) {};
		\vertex (9) at (3,1.5) {};
		\vertex (10) at (4,2) {};
		\vertex (11) at (0.5,-1) {};
		\vertex (12) at (1,-2) {};
		\path
		(0) edge (1)
		(0) edge (3)
		(4) edge (7)
		(4) edge (9)
		(0) edge (11)		
		;
		\path[dashed,red]
		(1) edge (3);
		\draw[middlearrow={ellipsis}{0.7}] (1) -- (2);
		\draw[middlearrow={ellipsis}{0.7}] (3) -- (4);
		\draw[middlearrow={ellipsis}{0.7}] (7) -- (8);
		\draw[middlearrow={ellipsis}{0.7}] (9) -- (10);
		\draw[middlearrow={ellipsis}{0.7}] (11) -- (12);
		\end{tikzpicture}
	\caption{The tree $T$ in Subcase 2.2 of the proof of Theorem~\ref{tree}.  Note that every internal vertex of the $wv_2$-path may be adjacent to a single limb -- these limbs are not drawn.}
	\label{2xtb}
\end{figure}
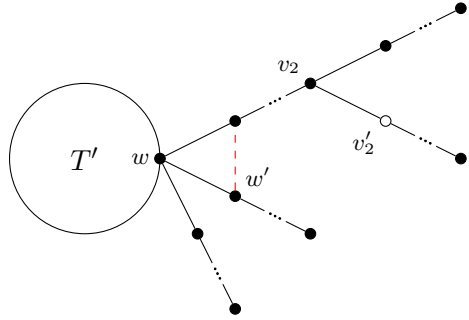
	
	\noindent
	Suppose without loss of generality that the $v_2v$-path contains a vertex of terminal degree $2$ in $T$ other than $v_2$.  Let $w$ be the first such vertex on the $v_2v$-path.  Let $v_2'$ be one of the neighbours of $v_2$ that lies on a limb of $T$, and let $w'$ be one of the neighbours of $w$ that lies on a limb of $T$.  Let $B_1,B_2,\dots,B_k$ be the branches of $T$ at $w$, such that $w'\in V(B_1)$,$v_2\in V(B_2)$, and $B_3$ is a path, and let $T'=T[V(B_4)\cup\cdots\cup V(B_k)]$.  By Theorem~\ref{Slater}, there is a basis $W$ of $T$ that contains $w'$, $v_2'$, and some vertex $u\in V(T')$.  Let $F$ be the graph obtained from $T$ by joining $w'$ and the neighbour of $w$ in $B_2$ (see Figure~\ref{2xtb}, where the new edge is shown as a dashed line).  We claim that $W'=W-\{w'\}$ is a resolving set for $F$.  This claim is established by the proof of Subcase 2.1, with $v$ and $v_1$ both replaced by $w$, and $v'_1$ replaced by $w'$.
\end{proof}

%

\begin{remark}\label{TprimeRemark}
In Case 2 of the proof of Theorem~\ref{tree}, we assumed that $T$ had no vertices of terminal degree greater than $2$.  However, this assumption was not important to the proof of Subcase~2.1 or Subcase~2.2.  Indeed, the only fact about the subtree $T'$ used in the proof of either subcase is that $T'$ contains some vertex in the chosen resolving set for $T$.
\end{remark}

If a tree $T$ is reducible, then we necessarily have $\beta(T)\geq 3$, and hence by Theorem~\ref{tree}, there is a single edge whose addition to $T$ produces a graph with lower dimension than $T$.  We remark that this does not hold for graphs in general.  For example, consider the graph $G$ shown in Figure~\ref{k10a}. One can show that $\beta(G)=3$, and that for every edge $e\in E\left(\overline{G}\right)$, we have $\beta(G+e)=3$.  However, the graph $G$ is reducible.  There is a graph $H\in \super(G)$ obtained from $G$ by adding two particular edges (see Figure~\ref{k10b}) such that $\beta(H)=2$.

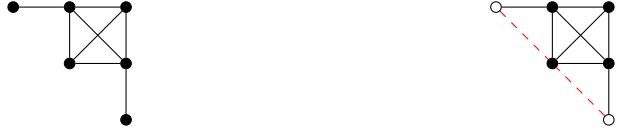
\begin{figure}[h] 
	\centering
	\begin{subfigure}[t]{0.45\textwidth}
	\centering
	\begin{tikzpicture}[scale=0.75]	
	\begin{scope}[rotate=-90]
	\vertex (x1) at (0,1) {};
	\vertex (x2) at (0,0) {};
	\vertex (x3) at (1,0) {};
	\vertex (x4) at (1,1) {};
	\vertex (v1) at (2,1) {};
	\vertex (v2) at (0,-1) {};
	
	\path
	(x1) edge (x2)
	(x1) edge (x3)
	(x1) edge (x4)
	(x2) edge (x3)
	(x2) edge (x4)
	(x3) edge (x4)
	(v2) edge (x2)
	(v1) edge (x4)
	;	
	
	\end{scope}
	\end{tikzpicture}
	\caption{A graph $G$ with $\beta(G)=3$.}
	\label{k10a}
	\end{subfigure}
	\begin{subfigure}[t]{0.45\textwidth}
	\centering
	\begin{tikzpicture}[scale=0.75]
	\begin{scope}[rotate=-90]
	\vertex (x1) at (0,1) {};
	\vertex (x2) at (0,0) {};
	\vertex (x3) at (1,0) {};
	\vertex (x4) at (1,1) {};
	\whitevertex (v1) at (2,1) {};
	\whitevertex (v2) at (0,-1) {};
	
	\path
	(x1) edge (x2)
	(x1) edge (x3)
	(x1) edge (x4)
	(x2) edge (x3)
	(x2) edge (x4)
	(x3) edge (x4)
	(v2) edge (x2)
	(v1) edge (x4)
	;	
	
	\path [red,dashed]
	(v1) edge (x3)
	(v2) edge (x3);
	\end{scope}
	\end{tikzpicture}
	\caption{A graph $H\in\super(G)$ with $\beta(H)=2$.}
	\label{k10b}
	\end{subfigure}
	\caption{A reducible graph for which the addition of any single edge does not reduce the dimension.}
\end{figure}

The following is an immediate consequence of Theorem~\ref{tree}.

\begin{corollary} \label{tree_dim_3_has_threshold2}
If $T$ is a tree with $\beta(T)=3$, then $\tau(T)=2$.  
\end{corollary}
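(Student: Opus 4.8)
The plan is short, since the statement is essentially an immediate consequence of Theorem~\ref{tree}. First I would invoke Theorem~\ref{tree} with the given tree $T$: because $\beta(T)=3\geq 3$, the theorem guarantees an edge $e\in E\left(\overline{T}\right)$ with $\beta(T+e)<\beta(T)=3$, hence $\beta(T+e)\leq 2$. Since $T+e\in\super(T)$, this already yields $\tau(T)\leq\beta(T+e)\leq 2$.

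For the reverse inequality I would show $\tau(T)\geq 2$. Let $n=|V(T)|$. Any graph $H\in\super(T)$ contains the spanning tree $T$, hence is connected, so $\beta(H)\geq 1$, with equality precisely when $H\cong P_n$ by the characterization of connected graphs of dimension $1$ recalled in the introduction. But $P_n$ has no proper connected spanning subgraph, so $H\cong P_n$ would force $T\cong P_n$ and thus $\beta(T)=1$, contradicting $\beta(T)=3$. Therefore $\beta(H)\geq 2$ for every $H\in\super(T)$, i.e., $\tau(T)\geq 2$. Combining the two bounds gives $\tau(T)=2$.

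There is no real obstacle here; the only point meriting a word of care is ruling out the possibility that adding $e$ drops the dimension all the way to $1$, which is handled by the same observation that a path has no proper connected spanning subgraph. One could alternatively phrase the lower bound using Corollary~\ref{ThresholdEmbedding} (a set $W$ of size $1$ cannot admit a $W$-resolved embedding of a non-path tree), but the direct argument above is the cleanest route.
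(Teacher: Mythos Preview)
Your proof is correct and matches the paper's reasoning: the paper simply states the corollary as ``an immediate consequence of Theorem~\ref{tree},'' and your upper bound is exactly that application, while your lower bound $\tau(T)\geq 2$ repeats the observation (already made in the introduction) that $P_n$ admits no proper connected spanning subgraph, so no graph in $\super(T)$ can have dimension $1$ unless $T$ itself is a path.
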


Our next result takes this one step further; we show that if $\beta(T)=4$, then $\tau(T)=2$.

\begin{theorem}\label{4tree}
	Let $T$ be a tree with $\beta(T)=4$. Then there exists a set $E=\{e_1,e_2\}$ of cardinality $2$ such that $\beta(T+E)=2$.
\end{theorem}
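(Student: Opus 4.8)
The plan is to follow the same overall strategy as in Theorem~\ref{tree}, but now adding two edges to bring the dimension from $4$ down to $2$ in one shot. Since $\beta(T) = 4$, Theorem~\ref{Slater} tells us $T$ has more than $4$ leaves, and the dimension-$4$ contribution is distributed among the exterior major vertices according to their terminal degrees. The first step is a case analysis on how the four is composed: either (i) some exterior major vertex $v$ has terminal degree at least $5$ (contributing all of $\beta(T)$), or (ii) some exterior major vertex has terminal degree $3$ or $4$, or (iii) the four is built up from exterior major vertices of terminal degree $2$ (contributing $1$ each) together with possibly one of terminal degree $3$. In each case I will pick two pairs of "parallel" limbs — as in Case~1 of Theorem~\ref{tree}, where one adds the edge $v_1 v_2$ between the penultimate vertices of two path-branches at $v$ — and add an edge to each pair so that each added edge lets us delete one basis vertex. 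The target is a basis $W$ of size $4$, chosen via Theorem~\ref{Slater} so that it contains the two "outer" vertices of each of the two selected pairs of limbs, and then we show $W$ minus those two vertices resolves $T + \{e_1, e_2\}$.

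The workhorse is the resolving argument from the proof of Theorem~\ref{tree} (Case~1 and Subcase~2.1), and Remark~\ref{TprimeRemark} is exactly what makes it reusable: the only property of the "residual" subtree $T'$ that the argument needs is that $T'$ contains some chosen basis vertex. So after adding the first edge $e_1$ at one major vertex (say $v$), reducing a size-$4$ basis $W$ to a size-$3$ set $W_1$ resolving $F_1 = T + e_1$, I would like to repeat the construction at a second major vertex $v^{\ast}$ to add $e_2$ and pass from $W_1$ to a size-$2$ set $W_2$ resolving $F_1 + e_2$. For this I need: (a) the second major vertex $v^{\ast}$, together with its two parallel limbs and their outer vertices $v^{\ast}_1, v^{\ast}_2$, to be available simultaneously with the first construction — i.e. $v \ne v^{\ast}$ and the limbs chosen are genuinely different limbs — which is where I use that $\beta(T) = 4$ forces "enough" terminal structure; and (b) after deleting $v^{\ast}_1$ from $W_1$, there must still be a basis vertex sitting in the part of the tree that plays the role of $T'$ for the second construction. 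Point (b) is where I will have to be careful about bookkeeping: the vertex $u$ that survived into $W_1$ from the first step should be positioned so that it also serves as the "vertex of $T'$" for the second step, or else I choose the second major vertex on the opposite side so that the two residual subtrees each retain a distinct surviving basis vertex.

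The main obstacle I anticipate is precisely this simultaneous compatibility of the two edge-additions: when $\beta(T)$ is as small as $4$, there may be just barely enough leaves and major vertices, so the two pairs of parallel limbs and the two "auxiliary" basis vertices $u$ must be allocated without collision, and the "subcase with an internal terminal-degree-$2$ vertex on the $v_1 v_2$-path" (the analogue of Subcase~2.2) will need to be threaded through for each of the two edges. Concretely, I expect the hardest configuration to be case~(iii) above — several exterior major vertices each of terminal degree $2$ — where I must add one edge near the "ends" of the core and a second edge elsewhere along the core, checking that the distance computations of Subcase~2.1 still go through in $F_1 + e_2$ (not just in $T + e_1$), using that each intermediate graph contains the relevant smaller tree as an isometric subgraph. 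Once the allocation is pinned down, each individual resolving verification is a routine repetition of the reverse-triangle-inequality computations already carried out in Theorem~\ref{tree}, together with the observation that $W_2$ has the right size and that two vertices resolve a connected graph of this shape. I would organize the write-up as: first the decomposition into cases by terminal-degree profile; then, in each case, explicitly name $e_1, e_2$ and the size-$4$ basis $W$; then invoke the Theorem~\ref{tree} argument (via Remark~\ref{TprimeRemark}) once to remove one vertex, set up the second application carefully, and invoke it again; and finally conclude $|W| - 2 = 2$ resolves $T + \{e_1, e_2\}$, so $\tau(T) \le 2$, while $\tau(T) \ge 2$ is immediate since $T$ is not a path.
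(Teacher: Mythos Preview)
Your overall outline---case analysis on the terminal-degree profile, then reuse the resolving computations from the proof of Theorem~\ref{tree}---matches the paper's. But the \emph{sequential} framing (``add $e_1$, reduce the basis from $4$ to $3$; then add $e_2$ and reduce from $3$ to $2$'') has a real gap, and there is a whole piece of the argument you do not mention.

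First, the sequential reduction: after you add $e_1$, the graph $F_1=T+e_1$ is no longer a tree, so neither Theorem~\ref{tree} nor its proof applies to $F_1$ as written (the proof uses unique paths, the Slater basis description, and the specific isometric subtree $T''$). Your remark about ``each intermediate graph contains the relevant smaller tree as an isometric subgraph'' is pointing in the right direction, but the right object is not an \emph{intermediate} graph. The paper adds \emph{both} edges at once, sets $F=T+\{e_1,e_2\}$, and then deletes one of the two ``paired-off'' branches to produce an isometric subgraph $H_i$ of $F$. The point is that deleting that branch also deletes one of the two added edges, so each $H_i$ is literally a \emph{smaller tree plus exactly one added edge}, and the proof of Theorem~\ref{tree} (via Remark~\ref{TprimeRemark}) shows the proposed two-vertex set resolves $H_i$. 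This sidesteps the non-tree issue entirely. Note also that in the case of a single exterior major vertex of terminal degree~$5$, both edges must be placed at the \emph{same} vertex $v$, so there is no ``second major vertex $v^{\ast}\neq v$'' to move to; your requirement~(a) as stated would rule out the very first case.

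Second, and this is the part you are missing: the isometric-subgraph trick covers all pairs $x,y$ \emph{except} the cross case where $x$ lies in the branch deleted to form $H_1$ and $y$ lies in the branch deleted to form $H_2$. Neither $H_1$ nor $H_2$ contains both of $x$ and $y$, so you cannot invoke Theorem~\ref{tree} at all here. In every case and subcase of the paper's proof this cross case is handled by a fresh, explicit distance computation in $F$ (typically: assume one of the two landmarks fails to resolve $x,y$, extract an equation among the branch lengths, and derive a contradiction or show the other landmark succeeds). These computations are elementary but they are the bulk of the work, and they genuinely depend on how $e_1$ and $e_2$ are placed relative to the core of $T$; you have not accounted for them.
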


\begin{proof}
Let $T$ be a tree with $\beta(T)=4$. We consider four cases, based on the number of vertices with terminal degree at least $2$ in $T$, which is at most four.

\smallskip
	
\noindent	\textbf{Case 1:} $T$ has exactly one vertex $v$ with terminal degree at least $2$.  This case is illustrated in Figure~\ref{Case1}.
\smallskip

\noindent In this case, we must have $\ter(v)=5$.  Let $B_1, ..., B_5$ be the five branches of $T$ at $v$, all of which must be paths. For every $i\in\{1,2,3,4,5\}$, let $v_i$ be the vertex of $B_i$ adjacent to $v$. Let $e_1=v_1v_2$, $e_2=v_3v_4$, and $F=T+\{e_1,e_2\}$.  We show that $\{v_1,v_4\}$ is a basis for $F$.

Let $x,y$ be any two distinct vertices of $F$.  Let $H_1=F-(V(B_2)-v)$, and let $H_2=F-(V(B_3)-v)$.  Note that $H_1$ and $H_2$ are isometric subgraphs of $F$.  From Case 1 of the proof of Theorem~\ref{tree}, we see that $\{v_1,v_4\}$ is a basis for both $H_1$ and $H_2$.  So if $x$ and $y$ both belong to $H_1$, or both belong to $H_2$, then they are resolved by the set $\{v_1,v_4\}$.  So we may assume, without loss of generality, that $x\in V(B_2)-v$ and $y\in V(B_3)-v$.  If $v_1$ resolves $x$ and $y$, then we are done, so suppose that $v_1$ does not resolve $x$ and $y$.  Let $k=d_F(v_1,x)=d_F(v_1,y)$.  Then we have $d_F(v_4,x)=k+1$ and $d_F(v_4,y)=k-1$, so we conclude that $v_4$ resolves $x$ and $y$.

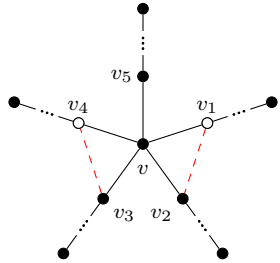
\begin{figure}[h]
		\centering
		\begin{tikzpicture}[scale=0.9]
		\vertex (0) at (0,0) {};
		\pgfmathtruncatemacro{\n}{5}		
		\foreach \x in {1,2,3,4,5}
		{
		\pgfmathtruncatemacro{\y}{\x+\n}
		\pgfmathtruncatemacro{\z}{18+72*\x}
		\vertex (\x) at (\z:1) {};
		\vertex (\y) at (\z:2) {};
		\path (\x) edge (0);
		\draw[middlearrow={ellipsis}{0.7}] (\x) -- (\y);
		}
		\node[below,yshift=-4pt] at (0) {\scriptsize $v$};
		\node[above] at (5) {\scriptsize $v_1$};
		\node[below left] at (4) {\scriptsize $v_2$};
		\node[below right] at (3) {\scriptsize $v_3$};
		\node[above] at (2) {\scriptsize $v_4$};
		\node[left] at (1) {\scriptsize $v_5$};
		\path[red,dashed]
		(5) edge (4)
		(3) edge (2);
		\whitevertex at (2) {};
		\whitevertex at (5) {};
		\end{tikzpicture}
	\caption{The tree $T$ in Case 1 of the proof of Theorem~\ref{4tree}.}
	\label{Case1}
\end{figure}

\smallskip
	
\noindent \textbf{Case 2:} $T$ has exactly two vertices $v_1$ and $v_2$ with terminal degree at least $2$.

\smallskip

\noindent
In this case, we have $\ter(v_1)+\ter(v_2)=6$.  For $i\in\{1,2\}$, let $B_{i1},B_{i2},\ldots, B_{i\ter(v_i)}$ be the branches of $T$ at $v_i$ that are paths.  For every $i\in\{1,2\}$ and $j\in\{1,\ldots,\ter(v_i)\}$, let $v_{ij}$ be the neighbour of $v_i$ that lies in the branch $B_{ij}$.  We deal with two subcases.

\smallskip
	
\noindent \textbf{Subcase 2.1:} $\ter(v_1)=4$ and $\ter(v_2)=2$. This case is illustrated in Figure~\ref{Subcase2.1}.

\smallskip

\noindent
Let $w$ be the neighbour of $v_1$ on the $v_1$--$v_2$ path in $T$.  Let $e_1=v_{11}v_{12}$, $e_2=v_{14}w$, and $F=T+\{e_1,e_2\}$. We show that  $\{v_{11},v_{21}\}$ is a basis for $F$.

Let $x,y$ be any two distinct vertices of $F$.  Let $H_1=F-(V(B_{12})-v_1)$ and $H_2=F-(V(B_{14})-v_1)$.  Note that $H_1$ and $H_2$ are isometric subgraphs of $F$.

Assume first that $x$ and $y$ belong to $H_1$. By Remark~\ref{TprimeRemark}, the argument used in Subcase 2.2 of Theorem~\ref{tree} establishes that the set $\{v_{11},v_{21}\}$ is a resolving set for $H_1$.
Assume next that $x$ and $y$ belong to $H_2$. From Case 1 of Theorem~\ref{tree}, we see that the set $\{v_{11},v_{21}\}$ is a resolving set for $H_2$.  So if $x$ and $y$ both belong to $H_1$, or both belong to $H_2$, then $x$ and $y$ are resolved by $\{v_{11},v_{21}\}$ in $F$.  So we may assume, without loss of generality, that $x$ belongs to $B_{12}-v_1$, and $y$ belongs to $B_{14}-v_1$.  Suppose that $v_{11}$ does not resolve $x$ and $y$. Let $k=d_F(v_{11},x)=d_F(v_{11},y)$ and $\ell=d_F(v_1,v_2)$.  Then we have $d_F(v_{21},x)=k+\ell+1$ and $d_F(v_{21},y)=k+\ell-1$, from which we conclude that $v_{21}$ resolves $x$ and $y$.  So if $v_{11}$ does not resolve $x$ and $y$, then $v_{21}$ does.

\begin{figure}[h]
		\centering
		\begin{subfigure}[t]{0.45\textwidth}
		\centering
		\begin{tikzpicture}[scale=0.9]
		\vertex (0) at (0,0) {};
		\pgfmathtruncatemacro{\n}{4}		
		\foreach \x in {1,2,3,4}
		{
		\pgfmathtruncatemacro{\y}{\x+\n}
		\pgfmathtruncatemacro{\z}{90+60*(\x-1)}
		\vertex (\x) at (\z:1) {};
		\vertex (\y) at (\z:2) {};
		\path (\x) edge (0);
		\draw[middlearrow={ellipsis}{0.7}] (\x) -- (\y);
		}
		\vertex (9) at (1,0) {};
		\vertex (10) at (2,0) {};
		\vertex (12) at (2.7,0.7) {};
		\vertex (13) at (3.4,1.4) {};
		\vertex (14) at (2.7,-0.7) {};
		\vertex (15) at (3.4,-1.4) {};
		\path
		(0) edge (9)
		(10) edge (12)
		(10) edge (14);
		\draw[middlearrow={ellipsis}{0.7}] (9) -- (10);
		\draw[middlearrow={ellipsis}{0.7}] (12) -- (13);
		\draw[middlearrow={ellipsis}{0.7}] (14) -- (15);
		\node[above right] at (0) {\scriptsize $v_1$};
		\node[right] at (1) {\scriptsize $v_{11}$};
		\node[below] at (2) {\scriptsize $v_{12}$};
		\node[above] at (3) {\scriptsize $v_{13}$};
		\node[left] at (4) {\scriptsize $v_{14}$};
		\node[above] at (9) {\scriptsize $w$};
		\node[above left] at (10) {\scriptsize $v_2$};
		\node[right] at (12) {\scriptsize $v_{21}$};
		\node[right] at (14) {\scriptsize $v_{22}$};
		\path[red,dashed]
		(1) edge (2)
		(4) edge (9);
		\whitevertex at (0,1) {};
		\whitevertex at (2.7,0.7) {};
		\end{tikzpicture}
		\caption{Subcase 2.1}
		\label{Subcase2.1}
		\end{subfigure}
		\begin{subfigure}[t]{0.45\textwidth}
		\centering
		\begin{tikzpicture}[scale=0.9]
		\vertex (0) at (0,0) {};
		\pgfmathtruncatemacro{\n}{3}		
		\foreach \x in {1,2,3}
		{
		\pgfmathtruncatemacro{\y}{\x+\n}
		\pgfmathtruncatemacro{\z}{90+90*(\x-1)}
		\vertex (\x) at (\z:1) {};
		\vertex (\y) at (\z:2) {};
		\path (\x) edge (0);
		\draw[middlearrow={ellipsis}{0.7}] (\x) -- (\y);
		}
		\vertex (10) at (2,0) {};
		\vertex (12) at (2,1) {};
		\vertex (13) at (2,2) {};
		\vertex (14) at (3,0) {};
		\vertex (15) at (4,0) {};
		\vertex (16) at (2,-1) {};
		\vertex (17) at (2,-2) {};
		\whitevertex at (0,1) {};
		\whitevertex at (2,1) {};		
		\path
		(10) edge (12)
		(10) edge (14)
		(10) edge (16);
		\draw[middlearrow={ellipsis}{0.6}] (0) -- (10);
		\draw[middlearrow={ellipsis}{0.7}] (12) -- (13);
		\draw[middlearrow={ellipsis}{0.7}] (14) -- (15);
		\draw[middlearrow={ellipsis}{0.7}] (16) -- (17);
		\node[above right] at (0) {\scriptsize $v_1$};
		\node[right] at (1) {\scriptsize $v_{11}$};
		\node[below] at (2) {\scriptsize $v_{12}$};
		\node[left] at (3) {\scriptsize $v_{13}$};
		\node[above left] at (10) {\scriptsize $v_2$};
		\node[left] at (12) {\scriptsize $v_{21}$};
		\node[below] at (14) {\scriptsize $v_{22}$};
		\node[right] at (16) {\scriptsize $v_{23}$};
		\path[red,dashed]
		(1) edge (2)
		(12) edge (14);
		\end{tikzpicture}
		\caption{Subcase 2.2}
		\label{Subcase2.2}
		\end{subfigure}
	\caption{The tree $T$ in Case 2 of the proof of Theorem~\ref{4tree}. Note that every internal vertex of the $v_1$--$v_2$ path in $T$ may be adjacent to a single limb -- these limbs are not drawn.}
	\label{Case2}
\end{figure}

\smallskip

\noindent \textbf{Subcase 2.2:} $\ter(v_1)=\ter(v_2)=3$. This case is illustrated in Figure~\ref{Subcase2.2}.

\smallskip

\noindent
Let $e_1=v_{11}v_{12}$, $e_2=v_{21}v_{22}$, and $F=T+\{e_1,e_2\}$. We show that $\{v_{11},v_{21}\}$ is a basis for $F$.

Let $x,y$ be any two distinct vertices of $F$.  Let $H_1=F-(V(B_{12})-v_1)$ and $H_2=F-(V(B_{22})-v_2)$.  Then $H_1$ and $H_2$ are isometric subgraphs of $F$, and from Case 1 of Theorem~\ref{tree}, the set $\{v_{11},v_{21}\}$ resolves both $H_1$ and $H_2$.  So we may assume, without loss of generality, that $x$ belongs to $B_{12}-v_1$, and $y$ belongs to $B_{22}-v_1$.  Suppose that $v_{11}$ does not resolve $x$ and $y$. Let $k=d_F(v_{11},x)=d_F(v_{11},y)$ and $\ell=d_F(v_1,v_2)$.  Then we have $d_F(v_{21},x)=k+\ell+1$ and $d_F(v_{21},y)=k-\ell-1$, from which we conclude that $v_{21}$ resolves $x$ and $y$.

\smallskip

\noindent	\textbf{Case 3:} $T$ has exactly three vertices $v_1$,  $v_2$, and $v_3$ with terminal degree at least $2$.

\smallskip

\noindent In this case, we must have $\ter(v_1)+\ter(v_2)+\ter(v_3)=7$.  For $i\in\{1,2,3\}$, let $B_{i1},B_{i2},\ldots, B_{i\ter(v_i)}$ be the branches of $T$ at $v_i$ that are paths.  For every $i\in\{1,2\}$ and $j\in\{1,\ldots,\ter(v_i)\}$, let $v_{ij}$ be the neighbour of $v_i$ that lies in the branch $B_{ij}$.

Let $S$ be the core of $T$. Since every leaf of $S$ must have terminal degree at least $2$ in $T$, we see that $S$ has at most three leaves.  We consider two subcases relative to $S$.

\smallskip

\noindent \textbf{Subcase 3.1:} $S$ has exactly three leaves.  This case is illustrated in Figure~\ref{Subcase3.1}.

\smallskip
	
\noindent Then exactly one vertex $v$ of $S$ has degree $3$, and every branch of $S$ at $v$ is a path. In this case, the leaves of $S$ must be $v_1$, $v_2$, and $v_3$.  For $i\in\{1,2,3\}$, let $B_i$ be the branch of $T$ at $v$ containing $v_i$, and let $d_i=d_T(v, v_{i})$. Without loss of generality, assume that $\ter(v_1)=\ter(v_2)=2$, that $\ter(v_3)=3$, and that $d_1\leq d_2$.  Let $w$ be the vertex on the $v$--$v_{21}$ subpath of $T$ that is distance $d_1+1$ from $v$ in $T$. Let $e_1=v_{31}v_{32}$, $e_2=v_{11}w$, and $F=T+\{e_1,e_2\}$. We show that $\{v_{21},v_{31}\}$ is a basis for $F$.

Let $x,y$ be any two distinct vertices of $F$.  Let $H_1=F-(V(B_{1})-v)$ and $H_2=F-(V(B_{32})-v_3)$.  Note that $H_1$ and $H_2$ are isometric subgraphs of $F$.  From Case 1 of Theorem~\ref{tree}, we see that $\{v_{21},v_{31}\}$ is a basis for $H_1$.  From Subcase 2.1 of Theorem~\ref{tree}, we see that $\{v_{21},v_{31}\}$ is a basis for $H_2$.  So we may assume, without loss of generality, that $x$ belongs to $B_{1}-v$, and $y$ belongs to $B_{32}-v_3$.  Let $x'$ be the vertex closest to $x$ on the $v$--$v_{11}$ path of $T$.
Suppose that $v_{21}$ does not resolve $x$ and $y$.  Let $k=d_F(v_{3},y)$.  Then $d_F(v_{31},y)=k$ as well. We also have
\[
1+d_2+d_3+k=d_F(v_{21},y)=d_F(v_{21},x)=1+d_2-d_1+d_F(v_{11},x')+d_F(x',x).
\]
It follows that
\[
d_F(x',x)=d_3+k+d_1-d_F(v_{11},x')\geq d_3+k,
\]
since $d_1\geq d_F(v_{11},x')$.  But then
\begin{align*}
d_F(v_{31},x)&=1+d_3+d_F(v,x')+d_F(x',x)\\
&\geq 1+2d_3+k+d_F(v,x')\\
&> 1+2d_3+k.
\end{align*}
So we have $d_F(v_{31},x)>k=d_F(v_{31},y)$, and we conclude that $v_{31}$ resolves $x$ and $y$.

\smallskip

\begin{figure}[htbp]
		\centering
		\begin{subfigure}[t]{0.9\textwidth}
		\centering
		\begin{tikzpicture}[scale=0.9]
		\vertex (0) at (0,0) {};
		\vertex (-1) at (-1,0) {};
		\begin{scope}[shift={(-1,0)}]
		\foreach \r in {1,2}
		{
		\foreach \t in {1,2,3}
		{
		\vertex (a\r\t) at (45*\t+90:\r) {};
		}
		}
		\end{scope}
		\foreach \r in {1,3,4,5,6}
		{
		\vertex (b\r) at (30:\r) {};
		}
		\foreach \r in {1,2,3,4}
		{
		\vertex (c\r) at (-30:\r) {};
		}
		\begin{scope}[shift={(b4)}]
		\foreach \r in {1,2}
		{
		\vertex (d\r) at (-30:\r) {};
		}
		\end{scope}
		\begin{scope}[shift={(c2)}]
		\foreach \r in {1,2}
		{
		\vertex (e\r) at (30:\r) {};
		}
		\end{scope}
		\path
		(0) edge (b1)
		(0) edge (c1)
		(-1) edge (a11)
		(-1) edge (a12)
		(-1) edge (a13)
		(b4) edge (b5)
		(b4) edge (d1)
		(c2) edge (c3)
		(c2) edge (e1)
		;
		\draw[middlearrow={ellipsis}{0.7}] (a11) -- (a21);
		\draw[middlearrow={ellipsis}{0.7}] (a12) -- (a22);
		\draw[middlearrow={ellipsis}{0.7}] (a13) -- (a23);
		\draw[middlearrow={ellipsis}{0.6}] (b1) -- (b3);
		\draw[middlearrow={ellipsis}{0.7}] (b3) -- (b4);
		\draw[middlearrow={ellipsis}{0.7}] (c1) -- (c2);
		\draw[middlearrow={ellipsis}{0.7}] (b5) -- (b6);
		\draw[middlearrow={ellipsis}{0.7}] (c3) -- (c4);
		\draw[middlearrow={ellipsis}{0.7}] (d1) -- (d2);
		\draw[middlearrow={ellipsis}{0.7}] (e1) -- (e2);
		\draw[middlearrow={ellipsis}{0.7}] (-1) -- (0);
		\node[below,yshift=-3pt] at (0) {\scriptsize $v$};
		\node[below,yshift=-3pt] at (-1) {\scriptsize $v_3$};
		\node[below right] at (a13) {\scriptsize $v_{31}$};
		\node[above] at (a12) {\scriptsize $v_{32}$};
		\node[above right] at (a11) {\scriptsize $v_{33}$};
		\node[above] at (b4) {\scriptsize $v_2$};
		\node[below] at (c2) {\scriptsize $v_1$};
		\node[above] at (b5) {\scriptsize $v_{22}$};
		\node[below] at (d1) {\scriptsize $v_{21}$};
		\node[below] at (e1) {\scriptsize $v_{11}$};
		\node[below] at (c3) {\scriptsize $v_{12}$};
		\node[above] at (b3) {\scriptsize $w$};
		\path[dashed,red]
		(a13) edge (a12)
		(e1) edge (b3);
		\whitevertex at (a13) {};
		\whitevertex at (d1) {};	
		\end{tikzpicture}
		\caption{Subcase 3.1}
		\label{Subcase3.1}
		\end{subfigure}\\\smallskip
		\begin{subfigure}[t]{0.9\textwidth}
		\centering
		\begin{tikzpicture}[scale=0.9]
		\foreach \x in {-5,-4,-3,-1,0,1,3}
		{
		\vertex (\x) at (\x,0) {};
		}
		\foreach \y in {-1,-2}
		{
		\vertex (a\y) at (-3+\y*0.7,\y*0.7) {};
		\vertex (b\y) at (3-\y*0.7,\y*0.7) {};
		\vertex (c\y) at (225:\y) {};
		}
		\foreach \y in {1,2}
		{
		\vertex (a\y) at (-3-\y*0.7,\y*0.7) {};
		\vertex (b\y) at (3+\y*0.7,\y*0.7) {};
		\vertex (c\y) at (135:\y) {};
		}
		\path
		(-4) edge (-3)
		(-1) edge (0)
		(0) edge (1)
		(-3) edge (a1)
		(-3) edge (a-1)
		(0) edge (c1)
		(0) edge (c-1)
		(3) edge (b1)
		(3) edge (b-1)
		;
		\draw[middlearrow={ellipsis}{0.7}] (a1) -- (a2);
		\draw[middlearrow={ellipsis}{0.7}] (a-1) -- (a-2);
		\draw[middlearrow={ellipsis}{0.7}] (b1) -- (b2);
		\draw[middlearrow={ellipsis}{0.7}] (b-1) -- (b-2);
		\draw[middlearrow={ellipsis}{0.7}] (c1) -- (c2);
		\draw[middlearrow={ellipsis}{0.7}] (c-1) -- (c-2);
		\draw[middlearrow={ellipsis}{0.6}] (-1) -- (-3);
		\draw[middlearrow={ellipsis}{0.6}] (1) -- (3);
		\draw[middlearrow={ellipsis}{0.7}] (-4) -- (-5);
		\node[above right] at (-3) {\scriptsize $v_1$};
		\node[above left] at (3) {\scriptsize $v_2$};
		\node[below] at (0) {\scriptsize $v_3$};
		\node[above right] at (a1) {\scriptsize $v_{11}$};
		\node[below] at (-4) {\scriptsize $v_{12}$};
		\node[below right] at (a-1) {\scriptsize $v_{13}$};
		\node[above left] at (b1) {\scriptsize $v_{21}$};
		\node[below left] at (b-1) {\scriptsize $v_{22}$};
		\node[right] at (c-1) {\scriptsize $v_{32}$};
		\node[left] at (c1) {\scriptsize $v_{31}$};
		\node[below] at (-1) {\scriptsize $w$};
		\path[red,dashed]
		(a1) edge (-4)
		(-1) edge (c1);
		\whitevertex at (a1) {};
		\whitevertex at (b1) {};
		\end{tikzpicture}
		\caption{Subcase 3.2.1}
		\label{Subcase3.2.1}
		\end{subfigure}\\\smallskip
		\begin{subfigure}[t]{0.9\textwidth}
		\centering
		\begin{tikzpicture}[scale=0.9]
		\vertex (0) at (0,0) {};
		\foreach \r in {1,2}
		{
		\foreach \t in {1,2,3}
		{
		\vertex (a\r\t) at (45*\t+90:\r) {};
		}
		}
		\foreach \r in {1,3,4,5,6}
		{
		\vertex (b\r) at (30:\r) {};
		}
		\foreach \r in {1,2,3,4}
		{
		\vertex (c\r) at (-30:\r) {};
		}
		\begin{scope}[shift={(b4)}]
		\foreach \r in {1,2}
		{
		\vertex (d\r) at (-30:\r) {};
		}
		\end{scope}
		\begin{scope}[shift={(c2)}]
		\foreach \r in {1,2}
		{
		\vertex (e\r) at (30:\r) {};
		}
		\end{scope}
		\path
		(0) edge (b1)
		(0) edge (c1)
		(0) edge (a11)
		(0) edge (a12)
		(0) edge (a13)
		(b4) edge (b5)
		(b4) edge (d1)
		(c2) edge (c3)
		(c2) edge (e1)
		;
		\draw[middlearrow={ellipsis}{0.7}] (a11) -- (a21);
		\draw[middlearrow={ellipsis}{0.7}] (a12) -- (a22);
		\draw[middlearrow={ellipsis}{0.7}] (a13) -- (a23);
		\draw[middlearrow={ellipsis}{0.6}] (b1) -- (b3);
		\draw[middlearrow={ellipsis}{0.7}] (b3) -- (b4);
		\draw[middlearrow={ellipsis}{0.7}] (c1) -- (c2);
		\draw[middlearrow={ellipsis}{0.7}] (b5) -- (b6);
		\draw[middlearrow={ellipsis}{0.7}] (c3) -- (c4);
		\draw[middlearrow={ellipsis}{0.7}] (d1) -- (d2);
		\draw[middlearrow={ellipsis}{0.7}] (e1) -- (e2);
		\node[below,yshift=-3pt] at (0) {\scriptsize $v_3$};
		\node[below right] at (a13) {\scriptsize $v_{31}$};
		\node[above] at (a12) {\scriptsize $v_{32}$};
		\node[above right] at (a11) {\scriptsize $v_{33}$};
		\node[above] at (b4) {\scriptsize $v_2$};
		\node[below] at (c2) {\scriptsize $v_1$};
		\node[above] at (b5) {\scriptsize $v_{22}$};
		\node[below] at (d1) {\scriptsize $v_{21}$};
		\node[below] at (e1) {\scriptsize $v_{11}$};
		\node[below] at (c3) {\scriptsize $v_{12}$};
		\node[above] at (b3) {\scriptsize $w$};
		\path[dashed,red]
		(a13) edge (a12)
		(e1) edge (b3);
		\whitevertex at (a13) {};
		\whitevertex at (d1) {};
		\end{tikzpicture}
		\caption{Subcase 3.2.2}
		\label{Subcase3.2.2}
		\end{subfigure}
	\caption{The tree $T$ in Case 3 of the proof of Theorem~\ref{4tree}. Note that for every $i,j\in\{1,2,3\}$, every internal vertex of the $v_i$--$v_j$ path in $T$ may be adjacent to a single limb -- these limbs are not drawn.}
	\label{Case3}
\end{figure}

\noindent \textbf{Subcase 3.2:} $S$ has exactly two leaves, i.e., $S$ is a path.

\smallskip

\noindent
In this case, both leaves of $S$ must belong to the set $\{v_1,v_2,v_3\}$. Without loss of generality, assume that $v_1$ and $v_2$ are the leaves of $S$, and that $\ter(v_1) \ge \ter(v_2)$. The vertex $v_3$ must lie on the $v_1$--$v_2$ path in $T$. We consider two further subcases.

\smallskip
	
\noindent \textbf{Subcase 3.2.1:} $\ter(v_3)=2$.  This case is illustrated in Figure~\ref{Subcase3.2.1}.

\smallskip
	
\noindent Let $w$ be the neighbour of $v_3$ on the $v_1$--$v_3$ path in $T$.  Let $e_1=v_{11}v_{12}$, $e_2=v_{31}w$, and $F=T+\{e_1, e_2\}$. We show that $\{v_{11}, v_{21}\}$ resolves $F$.

Let $x,y$ be any two distinct vertices of $F$. Let $H_1=F-V(B_{12}-v_1)$ and $H_2=F-(V(B_{31})-v_3)$.  Note that $H_1$ and $H_2$ are isometric subgraphs of $F$.  From Subcase 2.1 of Theorem~\ref{tree}, we see that $\{v_{11},v_{21}\}$ is a basis for $H_1$.  From Case 1 of Theorem~\ref{tree}, we see that $\{v_{11},v_{21}\}$ is a basis for $H_2$.  So we may assume, without loss of generality, that $x$ belongs to $B_{12}-v_1$, and $y$ belongs to $B_{31}-v_3$.  If $v_{11}$ does not resolve $x$ and $y$, then let $k=d_F(v_{11},x)=d_F(v_{11},y)$.  Then $d_F(v_{21},x)=d(v_{21},v_1)+k$, while $d_F(v_{21},y)=d_F(v_{21},v_3)+k-d(v_1,v_3)<d_F(v_{21},x)$, and hence $x$ and $y$ are resolved by $v_{21}$.

\smallskip

\noindent \textbf{Subcase 3.2.2:} $\ter(v_3)=3$. This case is illustrated in Figure~\ref{Subcase3.2.2}.

\smallskip
	
\noindent  We may assume that $d_1=d_T(v_{1},v_3) \le d_T(v_{2},v_3)$. Let $w$ be the vertex on the $v_3$--$v_{21}$ path that is distance $d_1+1$ from $v_3$. Let $e_1=v_{11}w$, $e_2=v_{31}v_{32}$, and $F= T+\{e_1,e_2\}$. We claim that  $\{v_{21},v_{31}\}$ is a basis for $F$.  The proof is the same as that of Subcase 3.1, with $v$ set equal to $v_3$.  (Note that the proof still works with $d_3=d(v,v_3)=0$.)

\smallskip
	
\noindent	\textbf{Case 4:} $T$ has exactly four vertices $v_1$, $v_2$, $v_3$, and $v_4$ with terminal degree at least~$2$.

\smallskip

\noindent
In this case, we must have $\ter(v_i)=2$ for all $i\in\{1,2,3,4\}$.  For $i\in\{1,2,3,4\}$, let $B_{i1}$ and $B_{i2}$ be the branches of $T$ at $v_i$ that are paths, and for $j\in\{1,2\}$, let $v_{ij}$ be the neighbour of $v_i$ in $B_{ij}$. Let $S$ be the core of $T$. Since every leaf of $S$ must be an exterior major vertex with terminal degree at least $2$, we see that $S$ can have at most four leaves.

\smallskip
	
\noindent \textbf{Subcase 4.1:} $S$ has exactly two leaves.  This case is illustrated in Figure~\ref{Subcase4.1}.
	
\noindent Without loss of generality, let $v_1, v_2, v_3, v_4$ appear in that order on $S$. So $v_1$ and $v_4$ are the leaves of $S$. Let $w$ be the neighbour of $v_2$ on the $v_1$--$v_2$ path of $T$, and let $z$ be the neighbour of $v_3$ on the $v_3$--$v_4$ path. Let $e_1=v_{21}w$, $e_2=v_{31}z$ and $F=T+\{e_1, e_2\}$. We show that $\{v_{11}, v_{41}\}$ is a basis for $F$.

Let $x,y$ be any two distinct vertices of $F$. Let $H_1=F-(V(B_{21})-v_2)$ and $H_2=F-(V(B_{31})-v_3)$. Note that both $H_1$ and $H_2$ are isometric subgraphs of $F$.  From Subcase 2.2 of Theorem \ref{tree}, we see that $\{v_{11}, v_{41}\}$ is a resolving set for both $H_1$ and $H_2$. So we may assume, without loss of generality, that $x$ belongs to $B_{21}-v_2$, and $y$ belongs to $B_{31}-v_3$.  Let $d_{12}=d_F(v_1,v_2)$, $d_{23}=d_F(v_2,v_3)$, and $d_{34}=d_F(v_3,v_4)$. Let $k=d_F(v_{21}, x)$ and $\ell=d_F(v_{31}, y)$. Suppose that $v_{11}$ does not resolve $x$ and $y$. Then $1+d_{12}+k=1+d_{12}+d_{23}+1+\ell$, and hence $k=d_{23}+\ell +1$. Since $d_F(v_{41}, y)=d_{34} + \ell +1$, we have
\begin{align*}
d_F(v_{41}, x) &= 1+d_{34}+d_{23}+k+1\\
&=2+d_{34}+d_{23}+(d_{23}+ \ell +1)\\
&=(2+2d_{23}) +(d_{34}+ \ell +1)\\
&> d_F(v_{41}, y).
\end{align*}
So if $v_{11}$ does not resolve $x$ and $y$, then $v_{41}$ does.

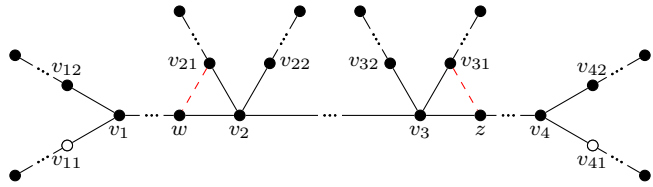
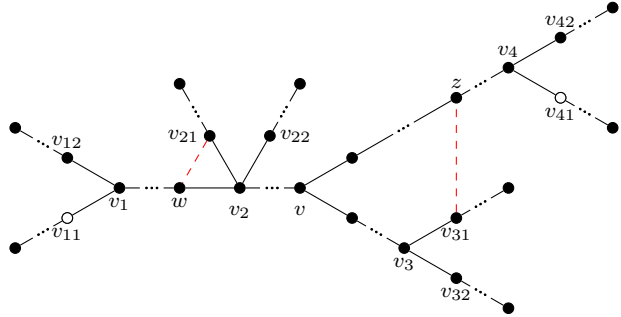
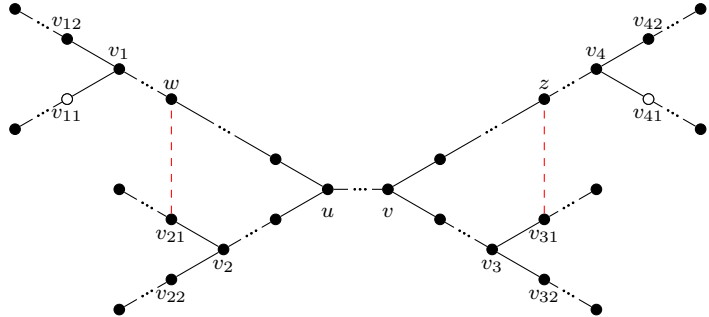
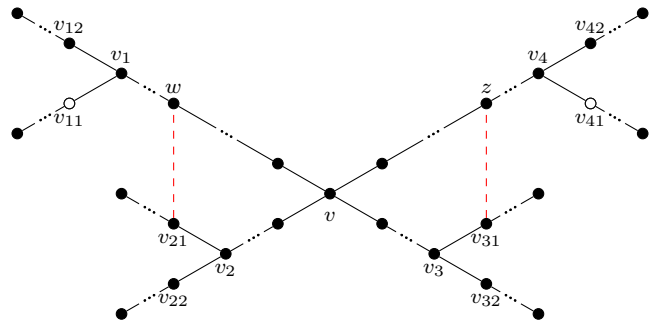
\begin{figure}[htbp]
		\centering
		\begin{subfigure}[t]{0.9\textwidth}
		\centering
		\begin{tikzpicture}[scale=0.8]
		\foreach \x in {-3,-2,-1}
		{
		\vertex (\x) at (\x-0.5,0) {};
		}
		\foreach \x in {1,2,3}
		{
		\vertex (\x) at (\x+0.5,0) {};
		}
		\begin{scope}[shift=(-3)]
		\vertex (a1) at (150:1) {};
		\vertex (a2) at (150:2) {};
		\vertex (a3) at (210:1) {};
		\vertex (a4) at (210:2) {};
		\end{scope}
		\begin{scope}[shift=(3)]
		\vertex (b1) at (-30:1) {};
		\vertex (b2) at (-30:2) {};
		\vertex (b3) at (30:1) {};
		\vertex (b4) at (30:2) {};
		\end{scope}
		\begin{scope}[shift=(-1)]
		\vertex (c1) at (60:1) {};
		\vertex (c2) at (60:2) {};
		\vertex (c3) at (120:1) {};
		\vertex (c4) at (120:2) {};
		\end{scope}
		\begin{scope}[shift=(1)]
		\vertex (d1) at (60:1) {};
		\vertex (d2) at (60:2) {};
		\vertex (d3) at (120:1) {};
		\vertex (d4) at (120:2) {};
		\end{scope}
		\path
		(-2) edge (-1)
		(1) edge (2)
		(-3) edge (a1)
		(-3) edge (a3)
		(3) edge (b1)
		(3) edge (b3)
		(-1) edge (c1)
		(-1) edge (c3)
		(1) edge (d1)
		(1) edge (d3)
		;
		\draw[middlearrow={ellipsis}{0.7}] (a1) -- (a2);
		\draw[middlearrow={ellipsis}{0.7}] (a3) -- (a4);
		\draw[middlearrow={ellipsis}{0.7}] (b1) -- (b2);
		\draw[middlearrow={ellipsis}{0.7}] (b3) -- (b4);
		\draw[middlearrow={ellipsis}{0.7}] (c1) -- (c2);
		\draw[middlearrow={ellipsis}{0.7}] (c3) -- (c4);
		\draw[middlearrow={ellipsis}{0.7}] (d1) -- (d2);
		\draw[middlearrow={ellipsis}{0.7}] (d3) -- (d4);
		\draw[middlearrow={ellipsis}{0.7}] (-2) -- (-3);
		\draw[middlearrow={ellipsis}{0.7}] (2) -- (3);
		\draw[middlearrow={ellipsis}{0.57}] (-1) -- (1);
		\node[below] at (-3) {\scriptsize $v_1$};
		\node[below] at (-1) {\scriptsize $v_2$};
		\node[below] at (1) {\scriptsize $v_3$};
		\node[below] at (3) {\scriptsize $v_4$};
		\node[below] at (a3) {\scriptsize $v_{11}$};
		\node[above] at (a1) {\scriptsize $v_{12}$};
		\node[below] at (-2) {\scriptsize $w$};
		\node[left] at (c3) {\scriptsize $v_{21}$};
		\node[right] at (c1) {\scriptsize $v_{22}$};
		\node[below] at (2) {\scriptsize $z$};
		\node[left] at (d3) {\scriptsize $v_{32}$};
		\node[right] at (d1) {\scriptsize $v_{31}$};
		\node[above] at (b3) {\scriptsize $v_{42}$};
		\node[below] at (b1) {\scriptsize $v_{41}$};
		\path[red,dashed]
		(c3) edge (-2)
		(d1) edge (2);
		\whitevertex at (a3) {};
		\whitevertex at (b1) {};
		\end{tikzpicture}
		\caption{Subcase 4.1}
		\label{Subcase4.1}
		\end{subfigure}\\\smallskip
		\begin{subfigure}[t]{0.9\textwidth}
		\centering
		\begin{tikzpicture}[scale=0.8]
		\vertex (0) at (0,0) {};
		\vertex (-1) at (-1,0) {};
		\vertex (-2) at (-2,0) {};
		\vertex (-3) at (-3,0) {};
		\begin{scope}[shift={(-3,0)}]
		\foreach \r in {1,2}
		{
		\foreach \t in {1,2}
		{
		\vertex (a\r\t) at (60*\t+90:\r) {};
		}
		}
		\end{scope}
		\foreach \r in {1,3,4,5,6}
		{
		\vertex (b\r) at (30:\r) {};
		}
		\foreach \r in {1,2,3,4}
		{
		\vertex (c\r) at (-30:\r) {};
		}
		\begin{scope}[shift={(b4)}]
		\foreach \r in {1,2}
		{
		\vertex (d\r) at (-30:\r) {};
		}
		\end{scope}
		\begin{scope}[shift={(c2)}]
		\foreach \r in {1,2}
		{
		\vertex (e\r) at (30:\r) {};
		}
		\end{scope}
		\begin{scope}[shift=(-1)]
		\foreach \r in {1,2}
		{
		\foreach \t in {1,2}
		{
		\vertex (f\r\t) at (60*\t:\r) {};
		}
		}
		\end{scope}
		\path
		(0) edge (b1)
		(0) edge (c1)
		(-3) edge (a11)
		(-3) edge (a12)
		(b4) edge (b5)
		(b4) edge (d1)
		(c2) edge (c3)
		(c2) edge (e1)
		(-2) edge (-1)
		(-1) edge (f11)
		(-1) edge (f12)
		;
		\draw[middlearrow={ellipsis}{0.7}] (a11) -- (a21);
		\draw[middlearrow={ellipsis}{0.7}] (a12) -- (a22);
		\draw[middlearrow={ellipsis}{0.7}] (f11) -- (f21);
		\draw[middlearrow={ellipsis}{0.7}] (f12) -- (f22);
		\draw[middlearrow={ellipsis}{0.6}] (b1) -- (b3);
		\draw[middlearrow={ellipsis}{0.7}] (b3) -- (b4);
		\draw[middlearrow={ellipsis}{0.7}] (c1) -- (c2);
		\draw[middlearrow={ellipsis}{0.7}] (b5) -- (b6);
		\draw[middlearrow={ellipsis}{0.7}] (c3) -- (c4);
		\draw[middlearrow={ellipsis}{0.7}] (d1) -- (d2);
		\draw[middlearrow={ellipsis}{0.7}] (e1) -- (e2);
		\draw[middlearrow={ellipsis}{0.7}] (-2) -- (-3);
		\draw[middlearrow={ellipsis}{0.7}] (0) -- (-1);
		\node[below,yshift=-3pt] at (0) {\scriptsize $v$};
		\node[below,yshift=-3pt] at (-1) {\scriptsize $v_2$};
		\node[below] at (-3) {\scriptsize $v_{1}$};
		\node[below] at (a12) {\scriptsize $v_{11}$};
		\node[above] at (a11) {\scriptsize $v_{12}$};
		\node[left] at (f12) {\scriptsize $v_{21}$};
		\node[right] at (f11) {\scriptsize $v_{22}$};
		\node[below] at (-2) {\scriptsize $w$};
		\node[above] at (b4) {\scriptsize $v_4$};
		\node[below] at (c2) {\scriptsize $v_3$};
		\node[above] at (b5) {\scriptsize $v_{42}$};
		\node[below] at (d1) {\scriptsize $v_{41}$};
		\node[below] at (e1) {\scriptsize $v_{31}$};
		\node[below] at (c3) {\scriptsize $v_{32}$};
		\node[above] at (b3) {\scriptsize $z$};
		\path[dashed,red]
		(f12) edge (-2)
		(e1) edge (b3);
		\whitevertex at (a12) {};
		\whitevertex at (d1) {};	
		\end{tikzpicture}
		\caption{Subcase 4.2}
		\label{Subcase4.2}
		\end{subfigure}\\\smallskip
		\begin{subfigure}[t]{0.9\textwidth}
		\centering
		\begin{tikzpicture}[scale=0.8]
		\vertex (0) at (0,0) {};
		\vertex (-1) at (-1,0) {};
		\foreach \r in {1,3,4,5,6}
		{
		\vertex (b\r) at (30:\r) {};
		}
		\foreach \r in {1,2,3,4}
		{
		\vertex (c\r) at (-30:\r) {};
		}
		\begin{scope}[shift={(b4)}]
		\foreach \r in {1,2}
		{
		\vertex (d\r) at (-30:\r) {};
		}
		\end{scope}
		\begin{scope}[shift={(c2)}]
		\foreach \r in {1,2}
		{
		\vertex (e\r) at (30:\r) {};
		}
		\end{scope}
		\begin{scope}[shift=(-1)]
		\foreach \r in {1,3,4,5,6}
		{
		\vertex (f\r) at (150:\r) {};
		}
		\foreach \r in {1,2,3,4}
		{
		\vertex (g\r) at (210:\r) {};
		}
		\end{scope}
		\begin{scope}[shift=(f4)]
		\foreach \r in {1,2}
		{
		\vertex (h\r) at (210:\r) {};
		}
		\end{scope}
		\begin{scope}[shift=(g2)]
		\foreach \r in {1,2}
		{
		\vertex (i\r) at (150:\r) {};
		}
		\end{scope}
		\path
		(0) edge (b1)
		(0) edge (c1)
		(b4) edge (b5)
		(b4) edge (d1)
		(c2) edge (c3)
		(c2) edge (e1)
		(-1) edge (f1)
		(-1) edge (g1)
		(f4) edge (f5)
		(f4) edge (h1)
		(g2) edge (g3)
		(g2) edge (i1)
		;
		\draw[middlearrow={ellipsis}{0.6}] (b1) -- (b3);
		\draw[middlearrow={ellipsis}{0.7}] (b3) -- (b4);
		\draw[middlearrow={ellipsis}{0.7}] (c1) -- (c2);
		\draw[middlearrow={ellipsis}{0.7}] (b5) -- (b6);
		\draw[middlearrow={ellipsis}{0.7}] (c3) -- (c4);
		\draw[middlearrow={ellipsis}{0.7}] (d1) -- (d2);
		\draw[middlearrow={ellipsis}{0.7}] (e1) -- (e2);
		\draw[middlearrow={ellipsis}{0.7}] (0) -- (-1);
		\draw[middlearrow={ellipsis}{0.6}] (f1) -- (f3);
		\draw[middlearrow={ellipsis}{0.7}] (g1) -- (g2);
		\draw[middlearrow={ellipsis}{0.7}] (f3) -- (f4);
		\draw[middlearrow={ellipsis}{0.7}] (f5) -- (f6);
		\draw[middlearrow={ellipsis}{0.7}] (h1) -- (h2);
		\draw[middlearrow={ellipsis}{0.7}] (g3) -- (g4);
		\draw[middlearrow={ellipsis}{0.7}] (i1) -- (i2);
		\node[below,yshift=-3pt] at (0) {\scriptsize $v$};
		\node[below,yshift=-3pt] at (-1) {\scriptsize $u$};
		\node[above] at (b4) {\scriptsize $v_4$};
		\node[below] at (c2) {\scriptsize $v_3$};
		\node[above] at (b5) {\scriptsize $v_{42}$};
		\node[below] at (d1) {\scriptsize $v_{41}$};
		\node[below] at (e1) {\scriptsize $v_{31}$};
		\node[below] at (c3) {\scriptsize $v_{32}$};
		\node[above] at (b3) {\scriptsize $z$};
		\node[above] at (f4) {\scriptsize $v_1$};
		\node[below] at (g2) {\scriptsize $v_2$};
		\node[above] at (f5) {\scriptsize $v_{12}$};
		\node[below] at (h1) {\scriptsize $v_{11}$};
		\node[below] at (i1) {\scriptsize $v_{21}$};
		\node[below] at (g3) {\scriptsize $v_{22}$};
		\node[above] at (f3) {\scriptsize $w$};
		\path[dashed,red]
		(e1) edge (b3)
		(i1) edge (f3);
		\whitevertex at (d1) {};
		\whitevertex at (h1) {};
		\end{tikzpicture}
		\caption{Subcase 4.3.1}
		\label{Subcase4.3.1}
		\end{subfigure}\\\smallskip
				\begin{subfigure}[t]{0.9\textwidth}
		\centering
		\begin{tikzpicture}[scale=0.8]
		\vertex (0) at (0,0) {};
		\foreach \r in {1,3,4,5,6}
		{
		\vertex (b\r) at (30:\r) {};
		}
		\foreach \r in {1,2,3,4}
		{
		\vertex (c\r) at (-30:\r) {};
		}
		\begin{scope}[shift={(b4)}]
		\foreach \r in {1,2}
		{
		\vertex (d\r) at (-30:\r) {};
		}
		\end{scope}
		\begin{scope}[shift={(c2)}]
		\foreach \r in {1,2}
		{
		\vertex (e\r) at (30:\r) {};
		}
		\end{scope}
		\foreach \r in {1,3,4,5,6}
		{
		\vertex (f\r) at (150:\r) {};
		}
		\foreach \r in {1,2,3,4}
		{
		\vertex (g\r) at (210:\r) {};
		}
		\begin{scope}[shift=(f4)]
		\foreach \r in {1,2}
		{
		\vertex (h\r) at (210:\r) {};
		}
		\end{scope}
		\begin{scope}[shift=(g2)]
		\foreach \r in {1,2}
		{
		\vertex (i\r) at (150:\r) {};
		}
		\end{scope}
		\path
		(0) edge (b1)
		(0) edge (c1)
		(b4) edge (b5)
		(b4) edge (d1)
		(c2) edge (c3)
		(c2) edge (e1)
		(0) edge (f1)
		(0) edge (g1)
		(f4) edge (f5)
		(f4) edge (h1)
		(g2) edge (g3)
		(g2) edge (i1)
		;
		\draw[middlearrow={ellipsis}{0.6}] (b1) -- (b3);
		\draw[middlearrow={ellipsis}{0.7}] (b3) -- (b4);
		\draw[middlearrow={ellipsis}{0.7}] (c1) -- (c2);
		\draw[middlearrow={ellipsis}{0.7}] (b5) -- (b6);
		\draw[middlearrow={ellipsis}{0.7}] (c3) -- (c4);
		\draw[middlearrow={ellipsis}{0.7}] (d1) -- (d2);
		\draw[middlearrow={ellipsis}{0.7}] (e1) -- (e2);
		\draw[middlearrow={ellipsis}{0.6}] (f1) -- (f3);
		\draw[middlearrow={ellipsis}{0.7}] (g1) -- (g2);
		\draw[middlearrow={ellipsis}{0.7}] (f3) -- (f4);
		\draw[middlearrow={ellipsis}{0.7}] (f5) -- (f6);
		\draw[middlearrow={ellipsis}{0.7}] (h1) -- (h2);
		\draw[middlearrow={ellipsis}{0.7}] (g3) -- (g4);
		\draw[middlearrow={ellipsis}{0.7}] (i1) -- (i2);
		\node[below,yshift=-3pt] at (0) {\scriptsize $v$};
		\node[above] at (b4) {\scriptsize $v_4$};
		\node[below] at (c2) {\scriptsize $v_3$};
		\node[above] at (b5) {\scriptsize $v_{42}$};
		\node[below] at (d1) {\scriptsize $v_{41}$};
		\node[below] at (e1) {\scriptsize $v_{31}$};
		\node[below] at (c3) {\scriptsize $v_{32}$};
		\node[above] at (b3) {\scriptsize $z$};
		\node[above] at (f4) {\scriptsize $v_1$};
		\node[below] at (g2) {\scriptsize $v_2$};
		\node[above] at (f5) {\scriptsize $v_{12}$};
		\node[below] at (h1) {\scriptsize $v_{11}$};
		\node[below] at (i1) {\scriptsize $v_{21}$};
		\node[below] at (g3) {\scriptsize $v_{22}$};
		\node[above] at (f3) {\scriptsize $w$};
		\path[dashed,red]
		(e1) edge (b3)
		(i1) edge (f3);
		\whitevertex at (d1) {};
		\whitevertex at (h1) {};
		\end{tikzpicture}
		\caption{Subcase 4.3.2}
		\label{Subcase4.3.2}
		\end{subfigure}
	\caption{The tree $T$ in Case 4 of the proof of Theorem~\ref{4tree}. Note that for every $i,j\in\{1,2,3,4\}$, every internal vertex of the $v_i$--$v_j$ path in $T$ may be adjacent to a single limb -- these limbs are not drawn.}
	\label{Case4}
\end{figure}

\smallskip
	
\noindent \textbf{Subcase 4.2:} $S$ has exactly three leaves.  This case is illustrated in Figure~\ref{Subcase4.2}.

\smallskip

\noindent Then there is a single vertex $v$ of degree $3$ in $S$.  Without loss of generality, suppose that $v_1$, $v_3$, and $v_4$ are leaves of $S$, and that $v_2$ is an internal vertex of $S$.  Without loss of generality, assume that $v_2$ is on the the $v$--$v_1$ path of $T$.  Note that it is possible that $v_2=v$.
	
For $i\in\{1,2,3,4\}$, let $d_{i}=d_T(v, v_i)$.  Without loss of generality, assume that $d_{3}\leq d_{4}$.  Let $w$  be the neighbour of $v_2$ on the $v_1$--$v_2$ subpath of $T$, and let $z$ be the vertex distance $d_{3}+1$ from $v$ on the $v$-$v_{4}$ subpath of $T$. Let $e_1=v_{21}w$, $e_2= v_{31}z$, and  $F=T+\{e_1,e_2\}$. We show that $\{v_{11}, v_{41}\}$ is a resolving set for $F$.

Let $x,y$ be any two distinct vertices of $F$.  Let $B_3$ be the branch of $T$ at $v$ containing $v_3$.  Let $H_1=F-(V(B_3)-v)$ and $H_2=F-(V(B_{21})-v_2)$.  Note that $H_1$ and $H_2$ are isometric subgraphs of $F$.  From Subcase 2.2 of Theorem~\ref{tree}, we see that $\{v_{11},v_{41}\}$ is a basis for $H_1$.  From Subcase 2.1 of Theorem~\ref{tree}, we see that $\{v_{11},v_{41}\}$ is a basis for $H_2$.

So we may assume, without loss of generality, that $x$ belongs to $B_3-v$, and $y$ belongs to $B_{21}-v_2$.  Let $x'$ be the vertex on the $v$--$v_{31}$ path of $T$ that is closest to $x$ in $T$.  Let $k=d_T(v_{21},y)$.  Then
\begin{align*}
d_F(v_{11},x)&=1+d_1+d_F(v,x')+d_F(x',x),\\
d_F(v_{41},x)&=1+d_4-d_F(v,x')+1+d_F(x',x),\\
d_F(v_{11},y)&=1+d_1-d_2+k, \text{ and}\\
d_F(v_{41},y)&=1+d_4+d_2+1+k.
\end{align*}
Suppose that $x$ and $y$ are not resolved by $\{v_{11},v_{41}\}$.  Then we have $d_F(v_{11},x)=d_F(v_{11},y)$ and $d_F(v_{41},x)=d_F(v_{41},y)$.  Thus we have
\[
d_F(v_{11},x)+d_F(v_{41},x)=d_F(v_{11},y)+d_F(v_{41},y),
\]
from which it follows that $k=d_F(x,x')$.  However, then we have
\[
d_F(v_{11},x)=1+d_1+d_F(v,x')+k>1+d_1+k\geq d_F(v_{11},y),
\]
which is a contradiction.

\smallskip
	
\noindent \textbf{Subcase 4.3:} $S$ has exactly four leaves.

\smallskip

\noindent
In this case, the four leaves of $S$ must be the exterior major vertices $v_1$, $v_2$, $v_3$, and~$v_4$.  We consider two further subcases.

\smallskip
	
\noindent \textbf{Subcase 4.3.1:} $S$ contains exactly two major vertices $u$ and $v$.  This case is illustrated in Figure~\ref{Subcase4.3.1}.

\smallskip
	
\noindent In this case, both $u$ and $v$ must have degree $3$ in $S$. Without loss of generality, we may assume that $v_1$ and $v_2$ lie on distinct limbs of $u$ in $S$, while $v_3$ and $v_4$ lie on distinct limbs of $v$ in $S$.  Let $d_1=d_T(u,v_1)$, $d_2=d_T(u,v_2)$, $d_3=d_T(v,v_3)$, and $d_4=d_T(v,v_4)$. We may assume that $d_2 \le d_1$ and $d_3 \le d_4$. Let $w$ be the vertex distance $d_2+1$ from $u$ on the $u$--$v_{11}$ subpath of $T$, and let $z$ be the vertex distance $d_3+1$ from $v$ on the $v$--$v_{41}$ subpath of $T$.  Let $e_1=v_{21}w$, $e_2=v_{31}z$, and $F=T+\{e_1,e_2\}$. We show that $\{v_{11}, v_{41}\}$ resolves $F$.

Let $x,y$ be any two distinct vertices of $F$. Let $B_2$ be the branch of $T$ at $u$ containing $v_2$ and let $B_3$ be the branch of $T$ at $v$ containing $v_3$. Let $H_1=F-(V(B_2)-u)$, and let $H_2=F-(V(B_3)-v)$. Note that $H_1$ and $H_2$ are isometric subgraphs of $F$. From Subcase 2.1 of Theorem \ref{tree}, the set $\{v_{11},v_{41}\}$ resolves both $H_1$ and $H_2$.  So we may assume, without loss of generality, that $x$ is in $B_2-u$, and $y$ is in $B_3-v$.

Let $x'$ be the vertex on the $u$--$v_2$ path of $T$ that is closest to $x$, and let $y'$ be the vertex on the $v$--$v_3$ path of $T$ that is closest to $y$.  Let $k=d_F(u,x')$, $\ell=d_F(v,y')$, and $m=d_F(u,v)$.  Note that both $k$ and $\ell$ must be positive.  Then we have
\begin{align*}
d_F(v_{11},x) &= 2+d_1-k+d_F(x',x),\\
d_F(v_{41}, x)&= 1+d_4+m+k+d_F(x',x),\\
d_F(v_{11}, y)&= 1+d_1+m+\ell+d_F(y',y), \text{ and}\\
d_F(v_{41}, y)&= 2+d_4-\ell+d_F(y',y).
\end{align*}
Suppose that $x$ and $y$ are not resolved by the set $\{v_{11},v_{41}\}$.  Then we have $d_F(v_{11},x)=d_F(v_{11},y)$ and $d_F(v_{41},x)=d_F(v_{41},y)$.  Thus we have
\begin{align*}
d_F(v_{11},x)+d_F(v_{41},x)=d_F(v_{11},y)+d_F(v_{41},y),
\end{align*}
from which it follows that $d_F(y,y')=d_F(x,x')$.  Now since
\[
2+d_1-k+d_F(x',x)=d_F(v_{11},x)=d_F(v_{11},y)=1+d_1+m+\ell+d_F(y',y),
\]
we find that $k+\ell+m=1.$  However, since $k$ and $\ell$ must be positive, this is impossible.

\smallskip

\noindent \textbf{Subcase 4.3.2:} $S$ contains a single major vertex $v$.  This case is illustrated in Figure~\ref{Subcase4.3.2}.

\smallskip

\noindent
Then $v$ has degree $4$ in $S$.  For $i\in\{1,2,3,4\}$, let $d_i=d_T(v,v_i)$. We may assume that $d_2 \le d_1$ and $d_3 \le d_4$. Let $w$ be the vertex distance $d_2+1$ from $v$ on the $v$--$v_{11}$ subpath of $T$, and let $z$ be the vertex distance $d_3+1$ from $v$ on the $v$--$v_{41}$ subpath of $T$. Let $e_1=v_{21}w$, $e_2=v_{31}z$, and $F=T+\{e_1,e_2\}$. We claim that $\{v_{11}, v_{41}\}$ resolves $F$.  The proof is the same as the proof of Subcase 4.3.1, with $u$ set equal to $v$. (Note that the proof still works with $m=d_F(u,v)=0$.)
%
\end{proof}

We have shown that if $T$ is a tree with dimension $2$, $3$ or $4$, then $T$ has threshold dimension $2$.  This result does not extend to trees of dimension $5$, as the graph $K_{1,6}$ has metric dimension $5$ and threshold dimension $3$.
One way to see this is to use the following upper bound on the order $n$ of a graph with dimension $b$ and diameter $D$, proven by Hernando et al.~\cite{Hernandoetal2010}:
\begin{align} \label{bound}
	n \leq \left(\flooor{\frac{2D}{3}}+1\right)^b+b\sum_{i=1}^{\ceeil{D/3}}(2i-1)^{b-1}
\end{align}
In particular, if $G$ is a graph of dimension $2$ and diameter $2$, then $|V(G)|\leq 6$.  It follows that no graph in $\super(K_{1,6})$ has dimension $2$.  Are there other trees of dimension $5$ and threshold dimension greater than $2$?  While we show that every subdivision of $K_{1,6}$ has threshold dimension $2$, we are able to construct some other examples of trees with dimension $5$ and threshold dimension greater than $2$.

Let $T$ be a tree obtained by subdividing at least one edge of $K_{1,6}$.  Let $v$ be the vertex of degree $6$ in $T$, and let $L$ be a component of $T$ of order at least $2$.  Then for some sufficiently large path $P$, an embedding $\varphi$ of $T$ in $P\boxtimes P$ is shown in Figure~\ref{tree3}.  One can verify that $\varphi$ is in fact a $W$-resolved embedding of $T$ in $P\boxtimes P$, where the vertices of $W$ are coloured white.  Note that $L$ is the only component of $T$ that must have order at least $2$ in order for this property to hold.

\begin{figure} 
	\centering	
	\begin{tikzpicture}[scale=0.6]
  \foreach \x in {0,1,3,4,5,6,7}
    \foreach \y in {0,1,3,4,5,6,7}
       {
       \vertex[opacity=0.25]  (\x\y) at (\x,\y) {};
       }
  \foreach \x in {0,3,4,5,6}
    \foreach \y in {0,3,4,5,6}
    {
    \pgfmathtruncatemacro{\a}{\x+1}
    \pgfmathtruncatemacro{\b}{\y+1}
    \path[opacity=0.25]
    (\x\y) edge (\a\b)
    (\x\b) edge (\a\y);
    }
\foreach \x in {0,3,4,5,6}
    \foreach \y in {0,1,4,5,6}
    {
    \pgfmathtruncatemacro{\a}{\x+1}
    \path[opacity=0.25]
    (\x\y) edge (\a\y)
    (\y\x) edge (\y\a);
    }
\foreach \x in {0,3,4,5,6}
{
\pgfmathtruncatemacro{\z}{\x+1}
\path[opacity=0.25]
(\x3) edge (\z3)
(3\x) edge (3\z)
(\x7) edge (\z7)
(7\x) edge (7\z)
;
}
\foreach \x in {0,1}
\foreach \y in {8,9}
{
\vertex[opacity=0.25]  (\x\y) at (\x,\y) {};
\vertex[opacity=0.25]  (\y\x) at (\y,\x) {};
}
\path[opacity=0.25]
(08) edge (09)
(18) edge (19)
(08) edge (18)
(09) edge (19)
(08) edge (19)
(18) edge (09)
(80) edge (90)
(81) edge (91)
(80) edge (81)
(90) edge (91)
(80) edge (91)
(81) edge (90);
\whitevertex (09) at (0,9) {};
\whitevertex (90) at (9,0) {};
\vertex (18) at (1,8) {};
\vertex (81) at (8,1) {};
\vertex (36) at (3,6) {};
\vertex (63) at (6,3) {};
\vertex (55) at (5,5) {};
\vertex (45) at (4,5) {};
\vertex (54) at (5,4) {};
\vertex (66) at (6,6) {};
\vertex (56) at (5,6) {};
\vertex (65) at (6,5) {};
\vertex (64) at (6,4) {};
\vertex (75) at (7,5) {};
\vertex (76) at (7,6) {};
\vertex (77) at (7,7) {};
\vertex (67) at (6,7) {};

\path[line width=1.5pt]
(09) edge (18)
(90) edge (81)
(36) edge (45)
(45) edge (55)
(63) edge (54)
(54) edge (55)
(55) edge (56)
(55) edge (66)
(55) edge (65)
(55) edge (64)
(56) edge (67)
(66) edge (77)
(65) edge (76)
(64) edge (75);

\node at (0.5,2) {\rotatebox[origin=c]{90}{$\cdots$}};
\node at (2,0.5) {$\cdots$};
\node at (0.5,7.5) {\rotatebox[origin=c]{90}{$\cdots$}};
\node at (7.5,0.5) {$\cdots$};
\node at (4,2) {\rotatebox[origin=c]{90}{$\cdots$}};
\node at (2,4) {$\cdots$};
\node at (4,8) {\rotatebox[origin=c]{90}{$\cdots$}};
\node at (8,4) {$\cdots$};
\node at (8,8) {\rotatebox[origin=c]{45}{$\cdots$}};
\node at (8,7) {\rotatebox[origin=c]{45}{$\cdots$}};
\node at (8,6) {\rotatebox[origin=c]{45}{$\cdots$}};
\node at (7,8) {\rotatebox[origin=c]{45}{$\cdots$}};
\node at (2,7) {\rotatebox[origin=c]{135}{$\cdots$}};
\node at (7,2) {\rotatebox[origin=c]{135}{$\cdots$}};
\draw[line width=1.5pt] (18) -- (1.5,7.5);
\draw[line width=1.5pt]  (81) -- (7.5,1.5);
\draw[line width=1.5pt]  (36) -- (2.5,6.5);
\draw[line width=1.5pt]  (63) -- (6.5,2.5);
\draw[line width=1.5pt]  (76) -- (7.5,6.5);
\draw[line width=1.5pt]  (77) -- (7.5,7.5);
\draw[line width=1.5pt]  (67) -- (6.5,7.5);
\draw[line width=1.5pt]  (75) -- (7.5,5.5);
\draw[dashed,rotate around={-45:(7,2)}] (7,2) ellipse (3.25 and 0.4);
\node at (8.5,2) {$L$};
\end{tikzpicture}
	\caption{A $W$-resolved embedding of any subdivision of $K_{1,6}$.}
	\label{tree3}
\end{figure}

Now consider the recursive sequence of trees $\{T_k\}_{k\geq 1}$ whose first four members are illustrated in Figure~\ref{5th3}.  It is straightforward to verify that for every $k\geq 1$, the tree $T_k$ has dimension $5$, diameter $2k$, and order $(5k+2)(k+1)/2$.  By an argument similar to the one used above for $K_{1,6}$, one can show that the trees $T_2$, $T_3$, and $T_4$ have threshold dimension greater than $2$.  However, this  is not the case for $T_5$. A $W$-resolved embedding of the tree $T_5$ in $P_{11}\boxtimes P_{11}$ is shown in Figure~\ref{T5}.  Hence, by Theorem \ref{Correspondence}, we have $\tau(T_5)=2$.

\begin{figure}[htb]
	\centering
		\begin{subfigure}[t]{0.2\textwidth}
		\centering
		\begin{tikzpicture}[scale=0.5, rotate=270]
		\pgfmathsetmacro{\n}{4}
		\pgfmathtruncatemacro{\m}{\n+1}
		\vertex (v) at (0:0) {};
		
		\foreach \x in {1, ..., \m}
		{
			\pgfmathtruncatemacro{\p}{60*\x}
			\pgfmathsetmacro{\y}{1}
			
			\vertex (\x\y) at (\p:\y) {};

		}
		\vertex (m) at (0:1) {};
		\path
		(v) edge (m);
		
		\foreach \x in {1, ..., \m}
		{
			\path
			(\x1) edge (v);
		}
		
		\end{tikzpicture}
		\caption{The tree $T_1$.}
		\label{5th3z}
	\end{subfigure}%
	\begin{subfigure}[t]{0.2\textwidth}
		\centering
		\begin{tikzpicture}[scale=0.5, rotate=270]
		\pgfmathsetmacro{\n}{4}
		\pgfmathtruncatemacro{\m}{\n+1}
		\vertex (v) at (0:0) {};
		
		\foreach \x in {1, ..., \m}
		{
			\pgfmathtruncatemacro{\p}{60*\x}
			\pgfmathsetmacro{\y}{1}
			
			\vertex (\x\y) at (\p:\y) {};

			\pgfmathsetmacro{\y}{2}
			\pgfmathtruncatemacro{\a}{\p-15}
			\pgfmathtruncatemacro{\b}{\p+15}
			
			\vertex(a\x) at (\a:\y) {};
			\vertex(b\x) at (\b:\y) {};
		}
		\vertex (m) at (0:1) {};
		\vertex (n) at (0:2) {};
		\path
		(v) edge (m)
		(m) edge (n);
		
		\foreach \x in {1, ..., \m}
		{
			\path
			(\x1) edge (v)
			(a\x) edge (\x1)
			(b\x) edge (\x1);
		}
		
		\end{tikzpicture}
		\caption{The tree $T_2$.}
		\label{5th3a}
	\end{subfigure}%
		\begin{subfigure}[t]{0.3\textwidth}
		\centering
		\begin{tikzpicture}[scale=0.5, rotate=270]

		\pgfmathsetmacro{\n}{4}
		\pgfmathtruncatemacro{\m}{\n+1}
		\vertex (v) at (0:0) {};
		
		\foreach \x in {1, ..., \m}
		{
			\pgfmathtruncatemacro{\p}{60*\x}
			\begin{scope}[shift=(\p:1)]
			\pgfmathsetmacro{\y}{0}
			
			\vertex (\x\y) at (\p:\y) {};

			\pgfmathsetmacro{\y}{1}
			\pgfmathtruncatemacro{\a}{\p-30}
			\pgfmathtruncatemacro{\b}{\p+30}
			
			\vertex(a\x) at (\a:\y) {};
			\vertex(b\x) at (\b:\y) {};
			
			\pgfmathsetmacro{\y}{2}
			\pgfmathtruncatemacro{\c}{\p-30}
			\pgfmathtruncatemacro{\d}{\p}
			\pgfmathtruncatemacro{\e}{\p+30}
			
			\vertex(c\x) at (\c:\y) {};
			\vertex(d\x) at (\d:\y) {};
			\vertex(e\x) at (\e:\y) {};
			
			\end{scope}
		}
		\vertex (m1) at (0:1) {};
		\vertex (m2) at (0:2) {};
		\vertex (m3) at (0:3) {};
		\path
		(v) edge (m1)
		(m) edge (m2)
		(m2) edge (m3);
		
		\foreach \x in {1, ..., \m}
		{
			\path
			(\x1) edge (v)
			(a\x) edge (\x1)
			(b\x) edge (\x1)
			(a\x) edge (c\x)
			(a\x) edge (d\x)
			(b\x) edge (e\x)
			;
			
		}
		
		\end{tikzpicture}
		\caption{The tree $T_3$.}
		\label{5th3b}
	\end{subfigure}%
	\begin{subfigure}[t]{0.3\textwidth}
		\centering
		\begin{tikzpicture}[scale=0.5, rotate=270]

		\pgfmathsetmacro{\n}{4}
		\pgfmathtruncatemacro{\m}{\n+1}
		\vertex (v) at (0:0) {};
		
		\foreach \x in {1, ..., \m}
		{
			\pgfmathtruncatemacro{\p}{60*\x}
			\begin{scope}[shift=(\p:1)]
			\pgfmathsetmacro{\y}{0}
			
			\vertex (\x\y) at (\p:\y) {};

			\pgfmathsetmacro{\y}{1}
			\pgfmathtruncatemacro{\a}{\p-30}
			\pgfmathtruncatemacro{\b}{\p+30}
			
			\vertex(a\x) at (\a:\y) {};
			\vertex(b\x) at (\b:\y) {};
			
			\pgfmathsetmacro{\y}{2}
			\pgfmathtruncatemacro{\c}{\p-30}
			\pgfmathtruncatemacro{\e}{\p+30}
			
			\vertex(c\x) at (\c:\y) {};
			\vertex(d\x) at (\p:\y) {};
			\vertex(e\x) at (\e:\y) {};
			
			\pgfmathsetmacro{\y}{3}
			\pgfmathtruncatemacro{\f}{\p+10}
			\pgfmathtruncatemacro{\g}{\p-10}
			
			\vertex(f\x) at (\c:\y) {};
			\vertex(l\x) at (\f:\y) {};
			\vertex(g\x) at (\g:\y) {};
			\vertex(h\x) at (\e:\y) {};
			
			\end{scope}
		}
		\vertex (m1) at (0:1) {};
		\vertex (m2) at (0:2) {};
		\vertex (m3) at (0:3) {};
		\vertex (m4) at (0:4) {};
		\path
		(v) edge (m1)
		(m) edge (m2)
		(m2) edge (m3)
		(m3) edge (m4);
		
		\foreach \x in {1, ..., \m}
		{
			\path
			(\x1) edge (v)
			(a\x) edge (\x1)
			(b\x) edge (\x1)
			(a\x) edge (c\x)
			(a\x) edge (d\x)
			(b\x) edge (e\x)
			(c\x) edge (f\x)
			(c\x) edge (g\x)
			(e\x) edge (h\x)
			(d\x) edge (l\x)
			;
			
		}
		
		\end{tikzpicture}
		\caption{The tree $T_4$.}
		\label{5th3d}
	\end{subfigure}
	\caption{The first four trees of the recursive sequence $\{T_k\}_{k\geq 1}$.}
	\label{5th3}
\end{figure}
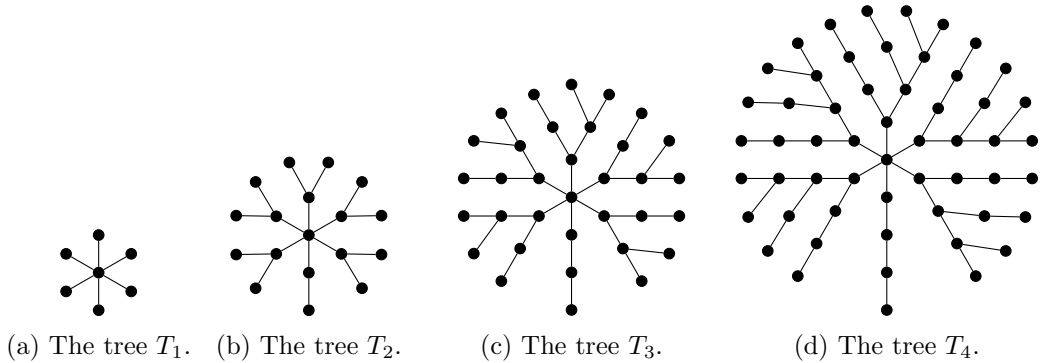

\begin{figure}[htb]
	\centering
	\begin{subfigure} [t]{0.4\textwidth}
		\centering
		\begin{tikzpicture}[scale=0.5, rotate=270]
		\pgfmathsetmacro{\n}{4}
		\pgfmathtruncatemacro{\m}{\n+1}
		\vertex (v) at (0:0) {};
		
		\foreach \x in {1, ..., \m}
		{
			\pgfmathtruncatemacro{\p}{60*\x}
			\begin{scope}[shift=(\p:1)]
			\pgfmathsetmacro{\y}{0}
			
			\vertex (\x\y) at (\p:\y) {};

			\pgfmathsetmacro{\y}{1}
			\pgfmathtruncatemacro{\a}{\p-30}
			\pgfmathtruncatemacro{\b}{\p+30}
			
			\vertex(a\x) at (\a:\y) {};
			\vertex(b\x) at (\b:\y) {};
			
			\pgfmathsetmacro{\y}{2}
			\pgfmathtruncatemacro{\c}{\p-30}
			\pgfmathtruncatemacro{\d}{\p+30}
			
			\vertex(c\x) at (\c:\y) {};
			\vertex(d\x) at (\p:\y) {};
			\vertex(e\x) at (\d:\y) {};
			
			\pgfmathsetmacro{\y}{3}
			\pgfmathtruncatemacro{\e}{\p-10}
			\pgfmathtruncatemacro{\f}{\p+10}
			
			\vertex(f\x) at (\e:\y) {};
			\vertex(r\x) at (\f:\y) {};
			\vertex(g\x) at (\c:\y) {};
			\vertex(h\x) at (\d:\y) {};
			
			\pgfmathsetmacro{\y}{4}
			\pgfmathtruncatemacro{\g}{\p-15}
			\pgfmathtruncatemacro{\h}{\p+15}
			
			\vertex(i\x) at (\g:\y) {};
			\vertex(j\x) at (\c:\y) {};
			\vertex(l\x) at (\p:\y) {};
			\vertex(k\x) at (\h:\y) {};
			\vertex(x\x) at (\d:\y) {};
			
			\end{scope}
		}
		\vertex (m1) at (0:1) {};
		\vertex (m2) at (0:2) {};
		\vertex (m3) at (0:3) {};
		\vertex (m4) at (0:4) {};
		\vertex (m5) at (0:5) {};
		\path
		(v) edge (m1)
		(m) edge (m2)
		(m2) edge (m3)
		(m3) edge (m4)
		(m4) edge (m5);
		
		\foreach \x in {1, ..., \m}
		{
			\path
			(\x1) edge (v)
			(a\x) edge (\x1)
			(b\x) edge (\x1)
			(a\x) edge (c\x)
			(a\x) edge (d\x)
			(b\x) edge (e\x)
			(c\x) edge (f\x)
			(c\x) edge (g\x)
			(e\x) edge (h\x)
			(d\x) edge (r\x)
			(i\x) edge (g\x)
			(j\x) edge (g\x)
			(k\x) edge (r\x)
			(l\x) edge (f\x)
			(h\x) edge (x\x)
			;
			
		}
		
		\end{tikzpicture}
		\caption{The tree $T_5$.}
		\label{5th3c}
	\end{subfigure}%
	\hspace{1cm}%
	\begin{subfigure}[t]{0.4\textwidth}
		\centering
\begin{tikzpicture}[scale=0.5]
  \foreach \x in {0,...,10}
    \foreach \y in {0,...,10}
       {
       \vertex[opacity=0.25]  (\x\y) at (\x,\y) {};
       }
  \foreach \x in {0,...,9}
    \foreach \y in {0,...,9}
    {
    \pgfmathtruncatemacro{\a}{\x+1}
    \pgfmathtruncatemacro{\b}{\y+1}
    \path[opacity=0.25]
    (\x\y) edge (\a\b)
    (\x\b) edge (\a\y);
    }
\foreach \x in {0,...,9}
    \foreach \y in {0,...,10}
    {
    \pgfmathtruncatemacro{\a}{\x+1}
    \path[opacity=0.25]
    (\x\y) edge (\a\y)
    (\y\x) edge (\y\a);
    }
\foreach \x in {0,...,10}
{
\node[left] at (0,\x) {\scriptsize \x};
\node[below] at (\x,0) {\scriptsize \x};
}
	
	\vertex (v) at (5,5) {};
	
	\vertex(1x1) at (0,7) {};
	\vertex(2x1) at (1,7) {};
	\vertex(21x1) at (1,8) {};
	\vertex(3x1) at (2,7) {};
	\vertex(31x1) at (2,8) {};
	\vertex(32x1) at (2,9) {};
	\vertex(4x1) at (3,7) {};
	\vertex(41x1) at (3,8) {};
	\vertex(42x1) at (3,9) {};
	\vertex(43x1) at (3,10) {};
	\vertex(5x1) at (4,6) {};
	\vertex(51x1) at (4,7) {};
	\vertex(52x1) at (4,8) {};
	\vertex(53x1) at (4,9) {};
	\vertex(54x1) at (4,10) {};
	
	\vertex(1x2) at (7,0) {};
	\vertex(2x2) at (7,1) {};
	\vertex(21x2) at (8,1) {};
	\vertex(3x2) at (7,2) {};
	\vertex(31x2) at (8,2) {};
	\vertex(32x2) at (9,2) {};
	\vertex(4x2) at (7,3) {};
	\vertex(41x2) at (8,3) {};
	\vertex(42x2) at (9,3) {};
	\vertex(43x2) at (10,3) {};
	\vertex(5x2) at (6,4) {};
	\vertex(51x2) at (7,4) {};
	\vertex(52x2) at (8,4) {};
	\vertex(53x2) at (9,4) {};
	\vertex(54x2) at (10,4) {};
	
	\vertex(1x3) at (1,6) {};
	\vertex(2x3) at (2,6) {};
	\vertex(21x3) at (2,5) {};
	\vertex(3x3) at (3,6) {};
	\vertex(31x3) at (3,5) {};
	\vertex(32x3) at (3,4) {};
	\vertex(4x3) at (4,5) {};
	\vertex(41x3) at (4,4) {};
	\vertex(42x3) at (5,3) {};
	\vertex(43x3) at (4,3) {};
	\vertex(5x3) at (5,4) {};
	\vertex(51x3) at (6,3) {};
	\vertex(52x3) at (6,2) {};
	\vertex(53x3) at (6,1) {};
	\vertex(54x3) at (5,2) {};
	
	\vertex(1x4) at (10,5) {};
	\vertex(2x4) at (9,5) {};
	\vertex(21x4) at (10,6) {};
	\vertex(3x4) at (8,5) {};
	\vertex(31x4) at (9,6) {};
	\vertex(32x4) at (10,7) {};
	\vertex(4x4) at (7,5) {};
	\vertex(41x4) at (8,6) {};
	\vertex(42x4) at (9,7) {};
	\vertex(43x4) at (10,8) {};
	\vertex(5x4) at (6,5) {};
	\vertex(51x4) at (7,6) {};
	\vertex(52x4) at (8,7) {};
	\vertex(53x4) at (9,8) {};
	\vertex(54x4) at (10,9) {};
	
	\vertex(1x5) at (5,10) {};
	\vertex(2x5) at (5,9) {};
	\vertex(21x5) at (6,10) {};
	\vertex(3x5) at (5,8) {};
	\vertex(31x5) at (6,9) {};
	\vertex(32x5) at (7,10) {};
	\vertex(4x5) at (5,7) {};
	\vertex(41x5) at (6,8) {};
	\vertex(42x5) at (7,9) {};
	\vertex(43x5) at (8,10) {};
	\vertex(5x5) at (5,6) {};
	\vertex(51x5) at (6,7) {};
	\vertex(52x5) at (7,8) {};
	\vertex(53x5) at (8,9) {};
	\vertex(54x5) at (9,10) {};
	
	\foreach \x in {6,7,8,9, 10}
	{
	\vertex (\x\x) at (\x,\x) {};
	}
	\path[line width=1.5pt]
	(v) edge (66)
	(66) edge (77)	
	(77) edge (88)
	(88) edge (99)
	(99) edge (1010);
	
	\foreach \x in {1, ..., 5}
	{
	\path[line width=1.5pt]
	(1x\x) edge (2x\x)
	(2x\x) edge (3x\x)
	(3x\x) edge (4x\x)
	(4x\x) edge (5x\x)
	(v) edge (5x\x)
	
	(2x\x) edge (21x\x)
	(3x\x) edge (31x\x)
	(31x\x) edge (32x\x)
	(4x\x) edge (41x\x)
	(41x\x) edge (42x\x)
	(42x\x) edge (43x\x)
	(5x\x) edge (51x\x)
	(51x\x) edge (52x\x)
	(52x\x) edge (53x\x)
	(53x\x) edge (54x\x)
	;
	}
	\whitevertex at (7,0) {};
	\whitevertex at (0,7) {};
	\end{tikzpicture}
	\caption{An embedding of $T_5$ in $P_{11}\boxtimes P_{11}$.}
	\end{subfigure}
	\caption{A $W$-resolved embedding of the tree $T_5$.  The vertices of $W$ are coloured white.}
	\label{T5}
\end{figure}
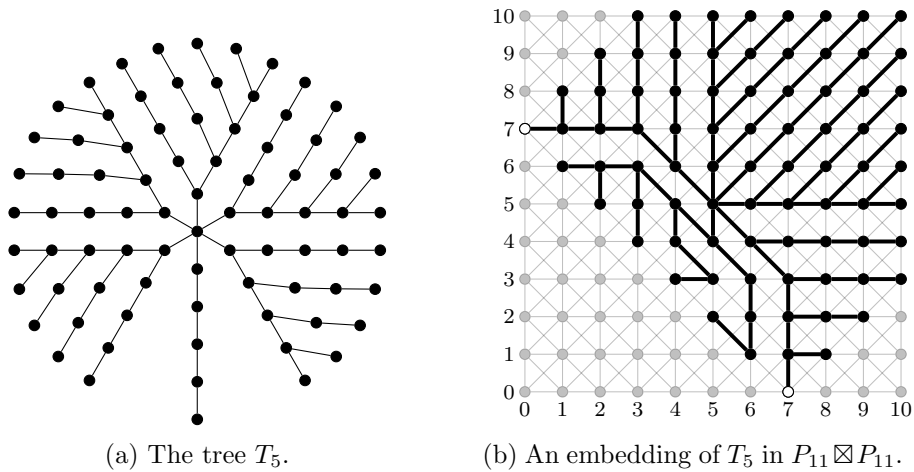

In fact, one can show that for all $k\geq 5$, the tree $T_k$ satisfies (\ref{bound}) for $b=2$.  However, it does not immediately follow that $\tau(T_k)=2$ for all $k\geq 5$.  We provide an example which illustrates that a tree $T$ of diameter $D$ may satisfy (\ref{bound}) for $b=2$, and still not have threshold dimension $2$.  Let $T$ be the tree obtained by subdividing every edge of the star $K_{1,8}$ exactly once. Then $T$ has diameter $4$ and order $17$, so (\ref{bound}) holds for $T$ in the case $b=2$. However, it can be verified that $T$ has threshold dimension at least $3$.


\section{Conclusion}

We have not considered the computational complexity of determining the threshold dimension of a graph.  For a graph $G$ and a positive integer $b$, consider the following decision problems:
\begin{enumerate}[label=(\alph*)]
\item Is there a set $B\subseteq V(G)$ such that $B$ is a basis for some graph $H\in\mathcal{U}(G)$?
\item Is there a graph $H\in\super(G)$ and a set $B\subseteq V(G)$ of cardinality $b$ such that $B$ resolves $H$?
\end{enumerate}
While it is easy to see that Problem (b) is in NP, it is not immediately clear whether or not Problem (a) is in NP.  We ask the following questions.

\begin{question}
Is Problem~(a) NP-hard, even for trees?  Is Problem~(b) NP-complete, even for trees?
\end{question}

In Section~\ref{trees}, we demonstrated that every tree with dimension $2$, $3$, or $4$ must have threshold dimension $2$.  While we gave some examples of trees of dimension $5$ and threshold dimension greater than $2$, no infinite family of trees with dimension $5$ and threshold dimension greater than $2$ is known.  We suspect that the following question has a positive answer.

\begin{question}
Are there only finitely many trees $T$ with $\beta(T)=5$ and $\tau(T)>2$?
\end{question}


\begin{thebibliography}{1}
	\bibitem{Belmonteetal2015} R. ~Belmonte, F. ~V. Fomin, P. ~A. ~Golovach, and M. ~S. ~Ramanujan, \emph{Metric dimension of bounded width graphs}, In:  Mathematical Foundations of Computer Science 2015, eds., Italiano G., Pighizzini G., Sannella D. Lecture Notes in Computer Science, 9235. Springer, Berlin, Heidelberg, 115--126.
	
	\bibitem{Caceresetal2007} J. ~C\'aceres, C. ~Hernando, M. ~Mora, I. ~M. ~Pelayo, M. ~L. ~Puertas, C. ~Seara, and D. ~R. ~Wood. \emph{On the metric dimension of Cartesian products of graphs}, SIAM J. Discrete Math. \textbf{21 (2)} (2007) 423--441.
	
	\bibitem{Chartrandetal2000} G. ~Chartrand, L. ~Eroh, M. ~A. Johnson, and O. ~R. Oellermann, \emph{Resolvability in graphs and the metric dimension of a graph}, Discrete Appl. Math. \textbf{105} (2000) 99--113.
	
	\bibitem{FitzpatrickNowakowski2000} S.~L.~Fitzpatrick and R.~J.~Nowakowski, \emph{The strong isometric dimension of finite reflexive graphs}, Discuss.~Math.~Graph Theory \textbf{20} (2000) 23--38.
	
	\bibitem{ProductHandbook} R.~Hammack, W.~Imrich, and S. Klav\v{z}ar, \emph{Handbook of Product Graphs}, Second Edition, CRC Press, Baco Raton, FL, 2011.	
	
	\bibitem{HararyMelter1976}  F. ~Harary and R. ~Melter, \emph{The metric dimension of a graph}, Ars Combin. \textbf{2} (1976) 191--195.
	
	

		\bibitem{Hernandoetal2010} C. ~Hernando, M. ~Mora, I. ~M. ~Pelayo, C. ~Seara, and D. ~R. ~Wood, \emph{Extremal graph theory for metric dimension and diameter}, Electron. J. Combin. \textbf{17} (2010) \#R30.
	
	\bibitem{Javaidteal2008} I. ~Javaid, M. ~T. ~Rahim, and K. ~Ali, \emph{Families of regular graphs with constant metric dimension}, Utilitas Math. \textbf{65} (2008) 21--33.
	
	\bibitem{Khulleretal1996} S. ~Khuller, B. ~Raghavachari, and A. ~Rosenfeld, \emph{Landmarks in graphs}, Discrete Appl. Math. \textbf{70} (1996) 200--207.
	
	\bibitem{Sudhakaraetal2009} G. ~Sudhakara and A. ~R. ~Hemanth Kumar, \emph{Graphs with metric dimension two -- a characterization}, World Academy of Sci., Eng. and Tech. \textbf{36} (2009) 622--627.
	
	\bibitem{Slater1975} P. ~J. ~Slater, \emph{Leaves of trees}, Congr. Numer. \textbf{14} (1975) 549--559.
	
\end{thebibliography}
\end{document}